\author{Shai Sarussi}
\title{Quasi-valuations and algebras over valuation domains}
\date{}
\begin{document}

\newtheorem{thm}{Theorem}[section]
\newtheorem{cor}[thm]{Corollary}
\newtheorem{lem}[thm]{Lemma}
\newtheorem{prop}[thm]{Proposition}
\newtheorem{ax}{Axiom}

\theoremstyle{definition}
\newtheorem{defn}[thm]{Definition}

\theoremstyle{remark}
\newtheorem{rem}[thm]{Remark}
\newtheorem{ex}[thm]{Example}
\newtheorem*{notation}{Notation}

\newcommand{\qv}{{quasi-valuation\ }}


\maketitle

\begin{abstract} {Suppose $F$ is a field with valuation $v$ and
valuation domain $O_{v}$, and $R$ is an $O_{v}-$algebra.
We prove that $R$ satisfies SGB (strong going between) over $O_{v}$.
We give a necessary and sufficient condition for $R$ to satisfy LO (lying over) over $O_{v}$.
Using the filter \qv constructed in [Sa1], we show that if $R$ is torsion-free
over $O_{v}$ then $R$ satisfies GD (going down) over $O_{v}$. In particular, if $R$ is torsion-free and $(R^{\times} \cap O_{v}) \subseteq O_{v}^{\times}$,
then for any chain in $\text{Spec}(O_v)$ there exists
a chain in $\text{Spec}(R)$ covering it.

Assuming $R$ is torsion-free over $O_{v}$ and $[R \otimes_{O_{v}}F:F]< \infty$, we prove that $R$ satisfies INC (incomparabilty) over $O_{v}$.
Assuming in addition that $(R^{\times} \cap O_{v}) \subseteq O_{v}^{\times}$, we deduce that $R$ and $O_{v}$ have the same Krull
dimension and a bound on the size of the prime spectrum of $R$ is given.

Under certain assumptions on $R$ and a \qv defined on it, we prove that the \qv ring satisfies GU (going up) over $O_{v}$.
Combining these five properties together, we deduce that any maximal chain of prime
ideals of the \qv ring is lying over $\text{Spec}(O_{v})$, in a one-to-one correspondence.




}\end{abstract} 
\section{Introduction}

Recall that a valuation on a field $F$ is a function $v : F
\rightarrow \Gamma \cup \{  \infty \}$, where $\Gamma$ is a
totally ordered abelian group and where $v$ satisfies the following
conditions:


 (A1) $v(x) \neq \infty$ iff $  x \neq 0$, for all $x\in F$;

(A2) $v(xy) = v(x)+v(y)$ for all $x,y \in F$;

(A3) $v(x+y) \geq \min \{ v(x),v(y) \}$ for all $x,y \in F$.

Recall (cf. [Sa1, Introduction]) that a quasi-valuation on a ring $R$ is a function $w
: R \rightarrow ~M \cup \{  \infty \}$, where $M$ is a totally
ordered abelian monoid, to which we adjoin an element $\infty$, which is
greater than all elements of $M$, and where $w$ satisfies the following
properties:

(B1) $w(0) = \infty$;

(B2) $w(xy) \geq w(x) + w(y)$ for all $x,y \in R$;

(B3) $w(x+y) \geq \min \{ w(x), w(y)\}$ for all $x,y \in R$.

In [Sa1] we developed the theory of quasi-valuations that extend a
given valuation. This paper continues the study of
quasi-valuations, which are natural generalizations of
valuations. Other related theories, including pseudo-valuations
(see [Co],[Hu] and [MH]), Manis-valuations and PM-valuations (see
[KZ]), value functions (see [Mo]), and gauges (see [TW]), are
discussed briefly in the introduction of [Sa1].

In this paper we shall generalize some of the results in [Sa1], as
well as studying some new concepts concerning algebras over
valuation domains. The proofs of some of the results to be
generalized use the same methods as in the commutative case, with
adjustments to the non-commutative case. However, most proofs require totally different approaches.

{\it In this paper $F$ denotes a field with a nontrivial valuation
$v$ and valuation domain $O_{v}$, $R$ is an algebra over $O_{v}$, and
$w:R \rightarrow M \cup \{ \infty \}$ is a quasi-valuation on $R$.
From time to time we consider a commutative valuation ring $S$, which is not necessarily a valuation domain,
and an $S-$algebra $R$.}


We list here some of the common symbols we use for $v$ a valuation
on a field $F$, and $w$ a quasi-valuation on a ring $R$ (usually
$R$ is taken to be an $O_{v}-$algebra and $w$ is a $v-$quasi-valuation):

$O_{v}=\{x\in F \ | \ v(x)\geq0\}$; the valuation domain.

$I_{v}=\{x\in F \ | \ v(x)>0\}$; the valuation ideal.

$O_{w}=\{x\in R \ | \ w(x)\geq0\}$; the quasi-valuation ring.

$I_{w}=\{x\in R \ | \ w(x)>0\}$; the quasi-valuation ideal.

$\Gamma_{v}$; the value group of the valuation $v$.

$M_{w}$; the value monoid of the quasi-valuation $w$, i.e., the
submonoid of $M$ generated by $  w(R\setminus \{ 0 \})$.

\begin{rem} \label{remark on sa1} From time to time we discuss in this paper some of the results presented in [Sa1]. In order to
avoid repetitions, we recall now the main scope discussed in [Sa1], to which we usually refer:
Let $F$ be a field with a nontrivial valuation
$v$ and valuation domain $O_{v}$, and let $E/F$ be a finite dimensional field extension.
Let $w:E \rightarrow M \cup \{ \infty \}$ be a quasi-valuation on $E$ extending $v$ on $F$
with \qv ring $O_{w}$. We will note whenever an additional assumption was added in [Sa1].
\end{rem}


In this paper the symbol $\subset$ means proper inclusion and the symbol $\subseteq$ means inclusion
or equality.

\subsection{Previous results - the construction of the filter \qv}




For the reader's convenience, we recall from [Sa1] the main steps
in constructing the filter quasi-valuation. For further details
and proofs, see [Sa1, Section 9].

The first step is to construct a value monoid, constructed from
the value group of the valuation. We call this value monoid the
cut monoid. We start by reviewing some of the basic
notions of Dedekind cuts of ordered sets. For further information
on Dedekind cuts see, for example, [AKK] or [Weh].

\begin{defn} Let $T$ be a totally ordered set. A subset $S$ of $T$ is
called initial if for every $\gamma \in S$ and $\alpha \in T$, if
$\alpha \leq \gamma$ then $\alpha \in S$. A cut $\mathcal
A=(\mathcal A^{L}, \mathcal A^{R})$ of $T$ is a partition of $T$
into two subsets $\mathcal A^{L}$ and $\mathcal A^{R}$, such that,
for every $\alpha \in \mathcal A^{L}$ and $\beta \in \mathcal
A^{R}$, $\alpha<\beta$. \end{defn}

 The set of all cuts $\mathcal A=(\mathcal A^{L}, \mathcal A^{R})$
 of the ordered set $T$ contains the two cuts $(\emptyset,T)$ and
 $(T,\emptyset)$; these are commonly denoted by $-\infty$ and
 $\infty$, respectively. However, we do not use the symbols $-\infty$ and
 $\infty$ to denote the above cuts since we
 define a "different" $\infty$.

Given $\alpha \in T$, we denote
$$(-\infty,\alpha]=\{\gamma \in T \mid \gamma \leq \alpha \}$$ and
$$(\alpha,\infty)=\{\gamma \in T \mid \gamma > \alpha \}.$$
One defines similarly the sets $(-\infty,\alpha)$ and
$[\alpha,\infty)$.

To define a cut we often write $\mathcal A^{L}=S$, meaning that
$\mathcal A$ is defined as $(S, T \setminus S)$ when $S$ is an
initial subset of $T$. The ordering on the set of all cuts of $T$
is defined by $\mathcal A \leq \mathcal B$ iff $\mathcal A^{L}
\subseteq \mathcal B^{L}$ (or equivalently $\mathcal A^{R}
\supseteq \mathcal B^{R}$). Given $S  \subseteq T$, $S^{+}$ is the smallest cut $\mathcal A$ such
that $S  \subseteq \mathcal A^{L}$. In particular, for $\alpha \in T$ we have
$\{\alpha\}^{+}=((-\infty,\alpha],(\alpha,\infty))$.

For a group $\Gamma$, subsets $S,S' \subseteq \Gamma$ and $n \in
\Bbb N$, we define

$$S+S'=\{ \alpha+\beta \mid \alpha \in S, \beta \in S' \};$$

$$nS=\{ s_{1}+s_{2}+...+s_{n} \mid s_{1},s_{2},...,s_{n} \in S
\}.$$

Now, for $\Gamma$ a totally ordered abelian group, $\mathcal
M(\Gamma)$ is called the cut monoid of $\Gamma$.
$\mathcal M(\Gamma)$ is a totally ordered abelian monoid.

For $\mathcal A , \mathcal B \in \mathcal M(\Gamma)$, their (left)
sum is the cut defined by $$(\mathcal A + \mathcal B)^{L}=\mathcal
A^{L} + \mathcal B^{L}.$$ The zero in $\mathcal M(\Gamma)$ is the
cut $((-\infty,0],(0,\infty))$.

 For $\mathcal A \in \mathcal M(\Gamma)$ and
$n \in \Bbb N$,  the cut $n\mathcal A$ is defined by $$(n\mathcal
A)^{L}=n\mathcal A^{L}.$$

Note that there is a natural monomorphism of monoids $\varphi :
\Gamma \rightarrow \mathcal M(\Gamma)$ defined in the following
way: for every $\alpha \in \Gamma$, $$\varphi (\alpha) =
((-\infty,\alpha],(\alpha,\infty)) $$

For $\alpha \in \Gamma$ and $\mathcal B \in \mathcal M(\Gamma)$,
we denote $\mathcal B-\alpha$ for the cut $\mathcal B+(-\alpha)$
(viewing $-\alpha$ as an element of $\mathcal M(\Gamma)$).


\begin{defn} (cf. [Sa1, Definition 9.13]) Let $v$ be a valuation on a field $F$ with value group
$\Gamma_{v}$. Let $O_{v}$ be the valuation domain of $ v$ and let
$R$ be an algebra over $O_{v}$. For every $x \in R$, the
$O_{v}$-{\it support} of $x$ in $R$ is the set
$$S^{R/O_{v}}_{x}=\{ a \in O_{v} | xR \subseteq aR\}.$$ We suppress
$R/O_{v}$ when it is understood. \end{defn}

For every $A \subseteq O_{v}$ we denote $(v(A))^{\geq 0}=\{ v(a)
\mid a \in A \}$; in particular,

$$(v(S_{x}))^{\geq 0}=\{ v(a) | a \in S_{x} \}.$$ The reason for
this notion is the fact that $(v(S_{x}))^{\geq 0}$ is an initial
subset of $\ (\Gamma_{v}) ^{\geq 0}$ ($=\{ \gamma \in \Gamma_{v} \mid \gamma \geq 0\}$).

We define $$v(S_{x})=(v(S_{x}))^{\geq 0} \cup  \{ \gamma \in \Gamma_{v} \mid \gamma < 0\};$$ 
and note that $v(S_{x})$ is an initial subset of $\Gamma_{v}$.

Note that if $A$ and $ B$ are subsets of $O_{v}$ such that $A
\subseteq B$ then $v(A) \subseteq v(B)$.

Recall that we do not denote the cut $(\Gamma_{v},\emptyset) \in
\mathcal M(\Gamma_{v}) $ as $\infty$. So, as usual, we adjoin to
$\mathcal M(\Gamma_{v}) $ an element $\infty$ greater than all
elements of $\mathcal M(\Gamma_{v})$; for every $\mathcal A \in
\mathcal M(\Gamma_{v})$ and $\alpha \in \Gamma_{v}$ we define
$\infty+\mathcal A=\mathcal A+\infty=\infty$ and
$\infty-\alpha=\infty$.

\begin{defn} (cf. [Sa1, Definition 9.17]) Let $v$ be a valuation on a field $F$
with value group $\Gamma_{v}$. Let $O_{v}$ be the valuation domain
of $v$ and let $R$ be an algebra over $O_{v}$. Let $\mathcal
M(\Gamma_{v})$ denote the cut monoid of $\Gamma_{v}$. We
say that a function $w:R \rightarrow \mathcal M(\Gamma_{v})
\cup \{ \infty \}$ is {\it induced by} $(R,v)$ if $w$ satisfies
the following:

 1. $w(x)=(v(S_{x}),\Gamma_{v} \setminus v(S_{x}))$ for every $0 \neq x \in
 R$. I.e.,  $w(x)^{L}=v(S_{x})$;

 2. $w(0)=\infty$.


\end{defn}


\begin{thm} (cf. [Sa1, Theorem 9.19]) \label{Sa1, Theorem 9.19} Let $v$ be a valuation on a field $F$ with value group
$\Gamma_{v}$. Let $O_{v}$ be the valuation domain of $\ v$ and let
$R$ be an algebra over $O_{v}$. Let $\mathcal M(\Gamma_{v})$
denote the cut monoid of $\Gamma_{v}$. Then there exists
a quasi-valuation $w:R \rightarrow \mathcal M(\Gamma_{v}) \cup \{
\infty \}$ induced by $(R,v)$.\end{thm}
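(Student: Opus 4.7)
The plan is to verify the three quasi-valuation axioms for the function $w$ defined by $w(0)=\infty$ and $w(x)^{L}=v(S_{x})$ for $x\neq 0$. Axiom (B1) is immediate. Before turning to (B2) and (B3), I would confirm that each $w(x)$ is a legitimate element of $\mathcal{M}(\Gamma_{v})$, i.e., that $v(S_{x})$ is an initial subset of $\Gamma_{v}$: by construction it already contains every negative element, so one only needs initiality within $(\Gamma_{v})^{\geq 0}$, which follows because if $a\in S_{x}$ and $0\leq\beta\leq v(a)$, then any $b\in O_{v}$ with $v(b)=\beta$ satisfies $b\mid a$ in $O_{v}$, hence $xR\subseteq aR\subseteq bR$ and $b\in S_{x}$.

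A guiding observation underlying the two remaining axioms is that $R$ being an $O_{v}$-algebra forces every element of $O_{v}$ to be central in $R$; in particular, for $a,b\in O_{v}$ with $v(a)=v(b)$ one has $a=bu$ with $u\in O_{v}^{\times}$ central and invertible in $R$, so $aR=bR$. Thus the containments defining membership in $S_{x}$ depend only on the valuation of the chosen element.

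For (B3), unwinding the definitions, the task is to show $v(S_{x+y})\supseteq v(S_{x})\cap v(S_{y})$. Negative values present no issue, so let $\gamma\geq 0$ lie in $v(S_{x})^{\geq 0}\cap v(S_{y})^{\geq 0}$ and choose $a\in S_{x}$, $b\in S_{y}$ with $v(a)=v(b)=\gamma$. By the centrality remark, $aR=bR$, so both $xR$ and $yR$ sit inside $aR$; then $(x+y)R\subseteq aR$, giving $a\in S_{x+y}$ and $\gamma\in v(S_{x+y})$.

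For (B2), the task is $v(S_{xy})\supseteq v(S_{x})+v(S_{y})$. The key step is: if $a\in S_{x}$ and $b\in S_{y}$, then $ab\in S_{xy}$. For any $r\in R$, write $yr=bt$ (using $yr\in yR\subseteq bR$) and $x=as_{0}$ (using $x\in aR$); then $xyr=xbt=bxt=bas_{0}t=abs_{0}t\in abR$, where centrality of $b$, then of $a$, is invoked. This yields $v(S_{x})^{\geq 0}+v(S_{y})^{\geq 0}\subseteq v(S_{xy})$. The residual cases $\gamma=\gamma_{1}+\gamma_{2}\geq 0$ with some $\gamma_{i}<0$ are absorbed by the inclusions $S_{x},S_{y}\subseteq S_{xy}$ (proved by the same centrality manipulation) together with the initiality of $v(S_{xy})$ established above. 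The main obstacle will be this non-commutative step: ensuring $xyR$ lands in the two-sided ideal $abR$ rather than in some mixed product like $aRb$, for which the centrality of $O_{v}$ in $R$ is doing the essential work and must be reused at each commutation.
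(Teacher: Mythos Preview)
The paper does not supply its own proof of this theorem: it is stated in the subsection ``Previous results'' and explicitly cited as [Sa1, Theorem 9.19], with the reader referred to [Sa1, Section 9] for details and proofs. Your direct verification of axioms (B1)--(B3) via the centrality of $O_{v}$ in $R$ is correct and is the natural route; in particular, your handling of the mixed-sign case in (B2) using $S_{x},S_{y}\subseteq S_{xy}$ together with initiality is clean. The only omissions are the trivial edge cases where $x=0$, $y=0$, $x+y=0$, or $xy=0$, all of which are immediate since $w$ takes the value $\infty$ there.
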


 The quasi-valuation discussed in Theorem \ref{Sa1, Theorem 9.19} is called the
filter quasi-valuation induced by $(R,v)$.

The following lemma is very important for our study. We shall use it in Theorem
\ref{R torsion-free over Ov satisfies GD} to prove that any torsion-free
$O_{v}-$algebra satisfies GD over $O_{v}$. We shall also use it in Lemma \ref{minimal set with 1}.

\begin{lem} (cf. [Sa1, Lemma 9.25]) \label{Sa1, Lemma 9.25} Notation as in Theorem
\ref{Sa1, Theorem 9.19}, assume in addition that
$R$ is torsion-free over $O_{v}$; then
$$w(cx)=v(c)+w(x)$$ for every $c \in O_{v}$, $x \in R$.

\end{lem}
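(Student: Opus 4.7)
The plan is to verify the equality of cuts $w(cx) = \varphi(v(c)) + w(x)$ by comparing their left halves. If $c=0$ or $x=0$, both sides equal $\infty$ by the conventions $w(0)=\infty$, $v(0)=\infty$, and $\infty+\mathcal{A}=\infty$; assume henceforth $c,x \neq 0$. Then $w(cx)^L = v(S_{cx})$ and $(\varphi(v(c))+w(x))^L = (-\infty,v(c)] + v(S_x)$, and both are initial subsets of $\Gamma_v$ containing all of $(\Gamma_v)^{<0}$ (the right-hand side because $0 \leq v(c)$ and $v(S_x)$ already includes every negative element). It therefore suffices to prove the equality on $(\Gamma_v)^{\geq 0}$.

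For the $\supseteq$ direction (which uses no torsion-freeness) I would first record that $1 \in S_x$ trivially, that $c \in S_{cx}$ since $cxR \subseteq cR$, and that if $a \in S_x$ then $cxR \subseteq caR$, hence $ca \in S_{cx}$. I would also note that $(v(S_{cx}))^{\geq 0}$ is initial in $(\Gamma_v)^{\geq 0}$: if $v(b) \leq v(a')$ with $a' \in S_{cx}$, then $a' \in bO_v$ in the valuation domain, so $a'R \subseteq bR$, whence $cxR \subseteq bR$ and $b \in S_{cx}$. Combining these, any nonnegative $\alpha = \gamma + \delta$ with $\gamma \leq v(c)$ and $\delta \in v(S_x)$ either satisfies $\alpha \leq v(c)$, closed by initiality plus $v(c) \in (v(S_{cx}))^{\geq 0}$, or satisfies $\alpha > v(c)$, which forces $\delta > 0$, hence $\delta = v(a)$ for some $a \in S_x$, and then $\alpha \leq v(c)+v(a) = v(ca) \in (v(S_{cx}))^{\geq 0}$ closes the case again by initiality.

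For the reverse inclusion, take $\alpha \in (v(S_{cx}))^{\geq 0}$, say $\alpha = v(a)$ with $a \in S_{cx}$. If $\alpha \leq v(c)$, we have $\alpha = \alpha + 0$ with $0 \in v(S_x)$, so we are done. If $\alpha > v(c)$, then $b := a/c$ lies in $O_v$ because $v(b) = \alpha - v(c) \geq 0$, and $a = cb$, so $cxR \subseteq aR = cbR$. This is where the torsion-freeness of $R$ over $O_v$ enters: for every $r \in R$ there is some $r' \in R$ with $cxr = cbr'$, giving $c(xr-br')=0$, and cancellation of $c \neq 0$ yields $xr = br'$. Therefore $xR \subseteq bR$, i.e., $b \in S_x$, and $\alpha = v(c)+v(b) \in v(c) + (v(S_x))^{\geq 0} \subseteq (-\infty,v(c)]+v(S_x)$, as required.

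The main obstacle I anticipate is precisely this cancellation step in the reverse inclusion; the rest of the proof is bookkeeping with initial subsets together with the easy observation that $cS_x \subseteq S_{cx}$. It is only when one must pass from the set-theoretic containment $cxR \subseteq cbR$ to $xR \subseteq bR$ that torsion-freeness becomes indispensable, and this is exactly what would fail for a non-torsion-free $O_v$-algebra.
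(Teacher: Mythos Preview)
Your proof is correct and follows essentially the approach of [Sa1, Lemma 9.25], which the present paper merely cites without reproving: handle the degenerate cases, obtain $w(cx)\geq v(c)+w(x)$ from $cS_x\subseteq S_{cx}$ together with initiality of $v(S_{cx})$, and for the reverse inequality split on whether $v(a)\leq v(c)$ or $v(a)>v(c)$, invoking torsion-freeness to cancel $c$ in the latter case. The same argument appears, essentially verbatim for modules, in the commented-out module-valuation section of the paper's \LaTeX\ source.
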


We note that even in the case where $R$ is a torsion-free algebra
over $O_{v}$, one does not necessarily have $w(c \cdot
1_{R})=v(c)$ for $c \in O_{v}$. This is despite the fact that
$$w(c \cdot 1_{R}) = v(c)+w( 1_{R})$$ by Lemma \ref{Sa1, Lemma 9.25}. The reason is that $w(
1_{R})$ is not necessarily $0$ (see, for example, [Sa1, Example 9.28]).

\begin{rem} Note that if $R$ is a torsion-free algebra over
$O_{v}$, then there is an embedding $R \hookrightarrow R
\otimes_{O_{v}}F$. In this case there exists a quasi-valuation on $R
\otimes_{O_{v}}F$ that extends the quasi-valuation on $R$. \end{rem}

\begin{lem} (cf. [Sa1, Lemma 9.31]) \label{[Sa1, Lemma 9.31]} Let $v, F, \Gamma_{v}$ and $O_{v}$ be as in Theorem
\ref{Sa1, Theorem 9.19}. Let $R$ be a torsion-free
algebra over $O_{v}$, $S$ a multiplicative closed subset of
$O_{v}$, $0 \notin S$, and let $w: R \rightarrow M \cup \{ \infty
\}$ be any quasi-valuation where $M$ is any totally ordered
abelian monoid containing $\Gamma_{v}$ and $w(cx)=v(c)+w(x)$ for
every $c \in O_{v}$, $x \in R$. Then there exists a
quasi-valuation $W$ on $R \otimes_{O_{v}}O_{v}S^{-1}$, extending
$w$ on $R$ (under the identification of $R$ with $R
\otimes_{O_{v}} 1$), with value monoid $ M \cup \{ \infty
\}$.\end{lem}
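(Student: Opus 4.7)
The plan is to define the extension on pure tensors by the obvious formula
$W(x \otimes s^{-1}) := w(x) - v(s)$
and then grind through well-definedness and the three quasi-valuation axioms. The subtraction makes sense inside $M$: since $\Gamma_{v} \subseteq M$ and $\Gamma_{v}$ is a group, $-v(s) \in M$, so $w(x) + (-v(s))$ is a legitimate element of $M$ (and we set $W(0) = \infty$, which is consistent with $w(0) = \infty$ and the convention $\infty - \alpha = \infty$).

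For well-definedness I would argue as follows. Suppose $x \otimes s^{-1} = y \otimes t^{-1}$ in $R \otimes_{O_{v}} O_{v}S^{-1}$. By the standard description of localization, there exists $u \in S$ with $u(tx - sy) = 0$ in $R$; since $R$ is torsion-free over $O_{v}$ and $0 \neq u \in O_{v}$, this forces $tx = sy$ in $R$. Applying the hypothesis $w(cr) = v(c) + w(r)$ with $c = t$ and $c = s$ respectively gives $v(t) + w(x) = v(s) + w(y)$, and rearranging inside the group $\Gamma_{v} \subseteq M$ yields $w(x) - v(s) = w(y) - v(t)$, so $W$ is well-defined. Extension is then immediate: $W(x \otimes 1) = w(x) - v(1) = w(x)$.

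For the three axioms, note first that since $R$ is an $O_{v}$-algebra, $O_{v}$ (and hence $O_{v}S^{-1}$) sits in the centralizer of $R$ inside $R \otimes_{O_{v}} O_{v}S^{-1}$, so $(x \otimes s^{-1})(y \otimes t^{-1}) = xy \otimes (st)^{-1}$ and any two fractions can be brought to the common denominator $st$. Then (B2) follows from
\[
W\bigl(xy \otimes (st)^{-1}\bigr) = w(xy) - v(st) \geq w(x) + w(y) - v(s) - v(t) = W(x \otimes s^{-1}) + W(y \otimes t^{-1}),
\]
and (B3) follows by writing $x \otimes s^{-1} + y \otimes t^{-1} = (tx+sy) \otimes (st)^{-1}$ and using $w(tx+sy) \geq \min\{w(tx),w(sy)\} = \min\{v(t)+w(x), v(s)+w(y)\}$; subtracting $v(st)$ (which is order-preserving since $\Gamma_{v}$ is a totally ordered group inside $M$) yields $\min\{w(x)-v(s), w(y)-v(t)\}$. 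The value-monoid claim is automatic: by construction $W$ takes its values in $M \cup \{\infty\}$, and it already attains every value in $w(R \setminus \{0\})$ since $W$ extends $w$.

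The only genuinely delicate step is well-definedness, and even there the real work is already done in the hypothesis $w(cx) = v(c) + w(x)$ for $c \in O_{v}$ — without this, one could not turn the identity $tx = sy$ in $R$ into a relation between $w(x)$ and $w(y)$. The rest is bookkeeping that relies on $\Gamma_{v}$ being a group sitting order-compatibly inside $M$, and on $O_{v}$ being central in $R$.
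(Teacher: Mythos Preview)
Your proof is correct and follows exactly the construction the paper recalls from [Sa1]: define $W(r\otimes b^{-1})=w(r)-v(b)$, use torsion-freeness to get well-definedness from $tx=sy$, and verify (B1)--(B3) by bringing fractions to a common denominator. The only point the paper makes explicit that you leave implicit is that \emph{every} element of $R\otimes_{O_v}O_vS^{-1}$ is a simple tensor $r\otimes b^{-1}$ (this is [Sa1, Remark~9.29]); your common-denominator argument for (B3) essentially proves this, but it would be cleaner to state it up front so that the definition of $W$ is visibly a definition on all of the localization rather than just on pure tensors.
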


We recall that $W$ above is defined in the following way:
for all $r \otimes
\frac{1}{b} \in R \otimes_{O_{v}}O_{v}S^{-1}$,
$$W( r\otimes \frac{1}{b})=w(r)-v(b)\ (\ =w(r)+(-v(b))\ ).$$
It is not difficult to see that every element of $R \otimes_{O_{v}}O_{v}S^{-1}$ is of the form above (see [Sa1, Remark 9.29]).

\begin{thm} \label {Sa1, Theorem 9.32} (cf. [Sa1, Theorem 9.32]) Let $v, F, \Gamma_{v}, O_{v}$ and $\mathcal M
(\Gamma_{v})$ be as in Theorem \ref{Sa1, Theorem 9.19}. Let $R$ be a torsion-free algebra over $O_{v}$ and
let $w$ denote the filter quasi-valuation induced by $(R,v)$; then
there exists a quasi-valuation $W$ on $R \otimes_{O_{v}}F$,
extending $w$ on $R$, with value monoid $\mathcal M(\Gamma_{v})$ and $O_{W}=R \otimes_{O_{v}} 1$. \end{thm}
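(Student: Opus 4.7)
The plan is to invoke Lemma \ref{[Sa1, Lemma 9.31]} with $S = O_{v} \setminus \{0\}$, so that $O_{v}S^{-1} = F$ and $R \otimes_{O_{v}} O_{v}S^{-1} = R \otimes_{O_{v}} F$. The hypothesis of that lemma is available: the filter quasi-valuation $w$ takes its values in $\mathcal{M}(\Gamma_{v}) \cup \{\infty\}$, and $\mathcal{M}(\Gamma_{v})$ contains $\Gamma_{v}$ via the natural monomorphism $\varphi$; moreover, since $R$ is torsion-free over $O_{v}$, Lemma \ref{Sa1, Lemma 9.25} gives $w(cx) = v(c) + w(x)$ for all $c \in O_{v}$ and $x \in R$. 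This produces a quasi-valuation $W$ on $R \otimes_{O_{v}} F$ extending $w$, with value monoid $\mathcal{M}(\Gamma_{v}) \cup \{\infty\}$, given by $W(r \otimes \tfrac{1}{b}) = w(r) - v(b)$.

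It remains to establish the equality $O_{W} = R \otimes_{O_{v}} 1$. For the inclusion $R \otimes_{O_{v}} 1 \subseteq O_{W}$: $W(r \otimes 1) = w(r) - v(1) = w(r)$, so it suffices to verify $w(r) \geq 0$ in $\mathcal{M}(\Gamma_{v})$ for every $r \in R$. Since $rR \subseteq R$, one has $1 \in S_{r}$, hence $0 = v(1) \in (v(S_{r}))^{\geq 0}$; combined with the fact that $v(S_{r})$ contains every negative element of $\Gamma_{v}$ by definition, the initial subset $w(r)^{L} = v(S_{r})$ contains $(-\infty, 0]$, which is exactly $w(r) \geq 0$.

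For the reverse inclusion, take any element $r \otimes \tfrac{1}{b}$ of $O_{W}$, so that $w(r) \geq \varphi(v(b))$ in $\mathcal{M}(\Gamma_{v})$. Unfolding the cut inequality yields $(-\infty, v(b)] \subseteq w(r)^{L} = v(S_{r})$, and in particular $v(b) \in v(S_{r})$. Since $v(b) \geq 0$, there exists $a \in S_{r}$ with $v(a) = v(b)$; then $a = bu$ for some $u \in O_{v}^{\times}$, and since $r \in aR$ (from $rR \subseteq aR$) one has $r = ar' = bur'$ for some $r' \in R$. Therefore $r \otimes \tfrac{1}{b} = bur' \otimes \tfrac{1}{b} = ur' \otimes 1 \in R \otimes_{O_{v}} 1$.

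The main subtlety is in the last paragraph, where one must translate the abstract cut inequality $w(r) \geq \varphi(v(b))$ into the concrete membership $v(b) \in v(S_{r})$, and then exploit the very definition of the $O_{v}$-support to extract an associate $a = bu$ and a factorization $r = bur'$ yielding the required preimage in $R$. Everything else is essentially bookkeeping on top of Lemmas \ref{Sa1, Lemma 9.25} and \ref{[Sa1, Lemma 9.31]}.
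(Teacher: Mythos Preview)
Your proof is correct and follows essentially the same approach the paper intends: apply Lemma~\ref{[Sa1, Lemma 9.31]} with $S=O_v\setminus\{0\}$ (using Lemma~\ref{Sa1, Lemma 9.25} for the stability hypothesis), and then verify $O_W=R\otimes 1$ directly from the definition of the support $S_r$. The only cosmetic difference is that, once you have $v(b)\in v(S_r)$, one can note immediately that $b$ itself lies in $S_r$ (since $S_r$ is closed under multiplication by units of $O_v$), giving $r=br'$ without the intermediate associate $a=bu$; but your version is equally valid.
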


It is not difficult to see that for $W$, as in Theorem \ref{Sa1, Theorem 9.32}, $(\emptyset, \Gamma_{v}) \notin im(W).$ This $W$ is also
called the filter \qv induced by $(R,v)$.
For more information on quasi-valuations see [Sa1] and [Sa2].



Recall (cf. [End, page 47]) that an isolated subgroup of a totally ordered abelian
group $\Gamma$ is a subgroup $H$ of $\Gamma$, such that $\{ \gamma
\in \Gamma | 0 \leq \gamma \leq h \} \subseteq H$ for any $h \in
H$ (some texts call such subgroups "convex" or "distinguished").
Also recall that there is a one to one correspondence between the set
of all prime ideals of a valuation domain and the set
$G(\Gamma_{v})$ of all isolated subgroups of $\Gamma_{v}$ given by
$P \mapsto \{ \alpha \in \Gamma_{v} \mid \alpha \neq v(p)$ and
$\alpha \neq -v(p)$ for all $p \in P\}$. See [Sa1, Section 4] for more information on isolated subgroups and their
"corresponding objects" in a monoid containing $\Gamma$.

We shall use the terminology of the cut monoid $\mathcal M
(\Gamma_{v})$ introduced in [Sa1, Section 9]. We shall freely interchange
between the following two notions: for $\alpha \in \Gamma_{v}$ we
shall sometimes refer to it as an element of $\Gamma_{v}$ and
sometimes as an element of $\mathcal M (\Gamma_{v})$. For example,
let $\alpha \in \Gamma_{v}$ and let $H$ be an isolated
subgroup of $\Gamma_{v}$. If we consider $\alpha$ as an element of
$\Gamma_{v}$ we may write $\alpha \in H$ or $\alpha \notin H$; if
we consider $\alpha$ as an element of $\mathcal M (\Gamma_{v})$ we
may write $\alpha < H^+$ or $\alpha > H^+$, respectively.

\subsection{Basic definitions}



Let $S$ and $R$ be rings with $S$ commutative. It is well known that $R$ is an algebra over $S$ iff
there exists a unitary homomorphism $f: S \rightarrow R$ such that $f[S] \subseteq Z(R)$, where $Z(R)$ denotes the center of $R$.

Assume that $R$ is an algebra over $S$. For subsets $I \subseteq R$ and $J \subseteq S$ we say that $I$ is lying over $J$ if $J=\{s \in S \mid s \cdot 1_{R} \in I \}$. By abuse of notation, we shall write
$J=I \cap S$ (even when $R$ is not faithful over $S$).
Equivalently, $J=f^{-1}[I]$ for the unitary homomorphism $f: S \rightarrow R$ defined by $f(s)=s \cdot 1_{R}$.
We shall freely interchange between those two terminologies.
Note that if $Q$ is a prime ideal of $R$ then $f^{-1}[Q]$ is a prime ideal of $S$. 
It is not difficult to see that $f$ is unitary if and only if for all $Q \in \text{Spec} (R)$, $f^{-1}[Q] \in \text{Spec} (S)$.

For the reader's convenience, we define now the basic properties we consider.

 We say that $R$ satisfies LO (lying over) over $S$ if for all $P \in \text {Spec} (S)$ there exists $Q \in \text {Spec}(R)$ lying over $P$.


 We say that $R$
satisfies GD (going down) over $S$ if for any $P_{1} \subset
P_{2}$ in $\text{Spec} (S)$ and for every $Q_{2} \in \text{Spec} (R)$
lying over $P_{2}$, there exists $Q_{1} \subset Q_{2}$ in
$\text{Spec} (R)$ lying over $P_{1}$.

We say that $R$
satisfies GU (going up) over $S$ if for any $P_{1} \subset
P_{2}$ in $\text{Spec} (S)$ and for every $Q_{1} \in \text{Spec} (R)$
lying over $P_{1}$, there exists $Q_{1} \subset Q_{2}$ in
$\text{Spec} (R)$ lying over $P_{2}$.

We say that $R$
satisfies SGB (strong going between) over $S$ if for any $P_{1} \subset
P_{2} \subset
P_{3}$ in $\text{Spec} (S)$ and for every $Q_{1} \subset
Q_{3}$ in $\text{Spec} (R)$ such that $Q_{1}$ is lying over $P_{1}$ and
$Q_{3}$ is lying over $P_{3}$, there exists $Q_{1} \subset Q_{2} \subset Q_{3}$ in
$\text{Spec} (R)$ lying over $P_{2}$.

We say that $R$ satisfies INC (incomparability) over $S$ if
whenever $Q_{1} \subset Q_{2}$ in $\text{Spec}(R)$, we have $Q_{1} \cap S
 \subset Q_{2} \cap S $.

It is also customary to say that $f$ (the unitary homomorphism defined above) satisfies a certain property, instead of saying that $R$ satisfies this certain property over $S$.

\section{Lying over and incomparability}

In this section we present a necessary and sufficient condition for an algebra to satisfy LO over a valuation domain.
We also present a sufficient condition for an algebra over a valuation domain to satisfy INC over it.

\subsection{Lying over}

In [Sa1, Lemma 3.12] we proved that $O_{w}$ satisfies LO
over $O_{v}$ (for the assumptions assumed in [Sa1] see Remark \ref{remark on sa1}).
We shall now prove that the lying over property is
valid in a much more general case.


We start with the following remark, which is a generalization of [Sa1, Lemma 3.10].

\begin{rem} \label{x can be written as x=ar} Let $R$ be an algebra over $O_{v}$ and let $I$ be an ideal of $O_v$; then every $x \in IR$ can be written as $x=ar$ for $a \in I$, $r \in R$.
\end{rem}

\begin{proof} Let $x \in IR$. Then $x$ is of the form $\sum_ {i=1}^{n} a_{i}r_{i}$ for $a_{i}  \in I, \ r_{i} \in R$; so take $a=a_{i_{0}}$
with minimal $v$-value
and write $x=ar$ for an appropriate $r \in R$.
 \end{proof}

Let $R$ be an algebra over $O_{v}$ and let $w$ be the filter quasi-valuation induced by $(R,v)$.
We define the following properties:

(a) There exists an $F-$algebra $A$ such that $R$ is a subring of $A$ and

$R \cap F=O_{v}$;

(b) $R$ is a torsion-free algebra over $O_{v}$ and $(R^{\times} \cap O_{v}) \subseteq O_{v}^{\times}$;

(c) $w(a \cdot 1_R)=v(a)$ for all $a \in O_{v}$;

(d)  $R$ is faithful and $((R_P)^{\times} \cap (O_{v})_{P}) \subseteq ((O_{v})_{P})^{\times}$ for every $P \in \text{Spec}(O_{v})$;

(e) $R$ satisfies LO over $O_{v}$.

\begin{prop} \label{almost equivalent conditions LO} \textbf{LO.} The following implications hold:

(a) $\Leftrightarrow$ (b) $\Rightarrow$ (c) $\Rightarrow$ (d) $\Leftrightarrow$ (e).

\end{prop}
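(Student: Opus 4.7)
The plan is to establish the chain in the order (a) $\Leftrightarrow$ (b), (b) $\Rightarrow$ (c), (c) $\Rightarrow$ (d), (d) $\Leftrightarrow$ (e) by essentially direct arguments. Three tools drive most of the work: Remark \ref{x can be written as x=ar}, which reduces membership in $IR$ to a single factorization $x=ar$ with $a \in I$; the fact that the filter \qv always satisfies $w(r) \geq 0$ on $R$, because $1 \in S_{r}$ for every $r$; and the correspondence between $\text{Spec}(O_{v})$ and isolated subgroups of $\Gamma_{v}$, which forces $v(b) > v(c)$ whenever $b \in P \neq (0)$ and $c \in O_{v} \setminus P$. The main technical obstacle will be (d) $\Rightarrow$ (e): producing a prime of $R$ lying \emph{exactly} over $P$ requires localizing at the central set $O_{v} \setminus P$, proving $PR_{P}$ is a proper ideal, and then applying Zorn to extract a maximal (hence prime) two-sided ideal of the possibly non-commutative $R_{P}$.

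For (a) $\Rightarrow$ (b): since $R \subseteq A$ with $A$ an $F$-algebra, $R$ is torsion-free over $O_{v}$; and any $a \in R^{\times} \cap O_{v}$ has its inverse $a^{-1}$ computable inside $A$, hence $a^{-1} \in R \cap F = O_{v}$, giving $a \in O_{v}^{\times}$. For (b) $\Rightarrow$ (a), I take $A = R \otimes_{O_{v}} F$, into which $R$ embeds by torsion-freeness. To verify $R \cap F = O_{v}$ inside $A$, suppose $r \otimes 1 = 1_{R} \otimes (c/d)$ with $c,d \in O_{v}$, $d \neq 0$; torsion-freeness yields $dr = c \cdot 1_{R}$ in $R$. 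If $v(c) \geq v(d)$ then $r = (c/d) \cdot 1_{R}$ with $c/d \in O_{v}$; if $v(c) < v(d)$ then $(d/c)r = 1_{R}$ exhibits $d/c \in R^{\times} \cap O_{v}$, which by (b) is in $O_{v}^{\times}$, contradicting $v(d/c) > 0$.

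For (b) $\Rightarrow$ (c), Lemma \ref{Sa1, Lemma 9.25} gives $w(a \cdot 1_{R}) = v(a) + w(1_{R})$, so it suffices to show $w(1_{R}) = 0$ in $\mathcal{M}(\Gamma_{v})$. The inequality $w(1_{R}) \geq 0$ holds since $1 \in S_{1_{R}}$. Conversely, if $w(1_{R}) > 0$, some $a \in S_{1_{R}}$ has $v(a) > 0$, so $1_{R} \in aR$; by Remark \ref{x can be written as x=ar}, $1_{R} = ar$ for some $r \in R$, and centrality of $a$ makes $a \in R^{\times}$, forcing $a \in O_{v}^{\times}$ by (b) --- a contradiction. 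For (c) $\Rightarrow$ (d), faithfulness is immediate, since $w(a \cdot 1_{R}) = v(a)$ equals $\infty$ only when $a=0$. For the localization condition, suppose $a/s \in (R_{P})^{\times} \cap (O_{v})_{P}$ with $a \in P$ and $s \in O_{v} \setminus P$; clearing denominators in the inverse equation in $R_{P}$ produces $br = c \cdot 1_{R}$ in $R$ with $b \in P$, $c \in O_{v} \setminus P$, $r \in R$. When $P = (0)$ this contradicts faithfulness ($c \neq 0$, $b = 0$). Otherwise $v(b) > v(c)$; applying $w$ and invoking (c) twice,
$$v(c) = w(c \cdot 1_{R}) = w(br) \geq v(b) + w(r),$$
so $w(r) \leq v(c) - v(b) < 0$, contradicting $w(r) \geq 0$.

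For (e) $\Rightarrow$ (d), faithfulness follows from LO at $(0)$, and given $P$, a prime $Q \in \text{Spec}(R)$ with $Q \cap O_{v} = P$ extends, via the central multiplicative set $O_{v} \setminus P$, to a prime $Q' = QR_{P}$ of $R_{P}$; then $a/s \in (R_{P})^{\times} \cap (O_{v})_{P}$ with $a \in P$ would lie in $Q'$ while being a unit, which is impossible. For (d) $\Rightarrow$ (e), I first establish that $PR_{P}$ is a proper ideal of $R_{P}$: if $1_{R_{P}} \in PR_{P}$ then some $s \cdot 1_{R} \in PR$ with $s \in O_{v} \setminus P$, and Remark \ref{x can be written as x=ar} writes this as $s \cdot 1_{R} = ar$ with $a \in P$, $r \in R$; then $(a/1)(r/s) = 1_{R_{P}}$, exhibiting $a/1 \in (R_{P})^{\times} \cap (O_{v})_{P}$ while $a \in P$, contradicting (d). By Zorn's lemma, $PR_{P}$ is contained in a maximal two-sided ideal $Q'$ of $R_{P}$, which is automatically prime. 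Contracting $Q'$ to $Q \in \text{Spec}(R)$: $P \subseteq Q \cap O_{v}$ (from $Q' \supseteq PR_{P}$), while $Q \cap O_{v} \subseteq P$ since every $s \in O_{v} \setminus P$ maps to a unit of $R_{P}$ and hence $s \cdot 1_{R} \notin Q$. Thus $Q \cap O_{v} = P$.
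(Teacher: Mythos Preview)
Your proof is correct and follows essentially the same route as the paper. The only organizational difference worth noting is in (d) $\Rightarrow$ (e): the paper shows directly that $PR \cap O_{v} = P$ (by taking $b \in (PR \cap O_{v}) \setminus P$, passing to $R_{P}$, and contradicting (d)) and then runs Zorn on $\{I \vartriangleleft R \mid I \cap O_{v} = P\}$, whereas you localize first, show $PR_{P}$ is proper, take a maximal two-sided ideal of $R_{P}$, and contract --- but the underlying computation (producing a unit of $R_{P}$ lying in $P(O_{v})_{P}$) is identical in both versions.
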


\begin{proof} (a) $\Rightarrow$ (b). It is clear that $R$ is a torsion-free $O_{v}-$algebra. Let $a \in R^{\times} \cap O_{v}$ and assume to the contrary that $a \notin O_{v}^{\times}$; then $a^{-1} \in R \cap F \setminus O_{v}$, a contradiction.

(b) $\Rightarrow$ (a). Define the $F-$ algebra $A=R \otimes_{O_{v}} F$. We identify $F$ as $1 \otimes_{O_{v}} F$ and, since $R$ is torsion-free over $O_{v}$, $R$ can be identified as $R \otimes_{O_{v}} 1$. It is obvious that $O_{v} \subseteq R \cap F$. Let $a \in R \cap F$
and assume to the contrary that $a \notin O_{v}$; then $a^{-1} \in O_{v}$ and thus $a^{-1} \in R$. So, $a^{-1} \in R^{\times} \cap O_{v} \setminus O_{v}^{\times}$, a contradiction.

(b) $\Rightarrow$ (c). By [Sa1, Remark 9.23], $w(a \cdot 1_R) \geq v(a)$. Assume to the contrary that there exists
$\beta \in w(a \cdot 1_R)$ such that $\beta > v(a)$ and let $b \in O_{v}$ with $v(b)=\beta$. Then $a \cdot 1_R=br$  for some $r \in R$. Now, $v(b)>v(a)$ and thus $a^{-1}b \in O_{v}$; therefore $a \cdot 1_R=a(a^{-1}b)r$. Since $R$ is torsion-free over $O_{v}$, one can cancel $a$ and get
$1_R=a^{-1}br$. Hence $a^{-1}b \in R^{\times} \cap O_{v} \setminus O_{v}^{\times}$, a contradiction.

(c) $\Rightarrow$ (d). If $R$ is not faithful then there exists a nonzero $a \in O_{v}$ such that $a \cdot 1_R=0$; but then $\infty=w(a \cdot 1_R)>v(a)$. Now, let $P \in \text{Spec}(O_{v})$
and assume to the contrary that $(R_P)^{\times} \cap (O_{v})_{P} \nsubseteq ((O_{v})_{P})^{\times}$. Let $p \in (R_P)^{\times} \cap (O_{v})_{P} \setminus ((O_{v})_{P})^{\times}$ (note that an element of $(O_{v})_{P} \setminus ((O_{v})_{P})^{\times}$ must be in $P$). Then there exists $rs^{-1} \in R_{P}$ ($r \in R, \ s \in O_{v} \setminus P$) such that $1_{R_P}=prs^{-1}$. So, there exists $s' \in O_{v} \setminus P$ such that $s'(pr-s \cdot 1_R)=0$; i.e., $s'pr=s's \cdot 1_R$. In particular, $w(s'pr)=w(s's \cdot 1_R)$. We prove that this is impossible. Indeed, by assumption $w(s's \cdot 1_R)=v(s's)$. Now, since
$s's \in O_{v} \setminus P$, $v(s's) < v(p)$. Also, by assumption, and since $w$ is the filter quasi-valuation induced by $(R,v)$, we have $$w(s'pr) \geq w(s'p \cdot 1_R) +w(r) =v(s'p)+w(r) \geq v(p),$$ a contradiction.

(d) $\Rightarrow$ (e). We start by proving that $PR \cap O_{v}=P$ for any $P \in \text{Spec}(O_{v})$. Let $P \in \text{Spec}(O_{v})$; it is obvious that $P \subseteq PR \cap O_{v}$. Assume to the contrary that $PR \cap O_{v} \supset P$
and let $b \in (PR \cap O_{v}) \setminus P$. Then there exist $p \in P$ and $r \in R$ such that $b \cdot 1_R = pr$ (note that every element of $PR$
can be written in this form, by Remark \ref{x can be written as x=ar}). We pass to $R_{P}$ over $(O_{v})_{P}$. By assumption $R$ is faithful over $O_{v}$ and thus $(b \cdot 1_R)(1_{O_{v}})^{-1}$ is nonzero in $R_{P}$. So,
$b \cdot 1_{R_{P}} = pr$ is nonzero in $R_{P}$; since $b^{-1} \in (O_{v})_{P}$, we get $1_{R_{P}} = b^{-1}pr$ in $R_{P}$. I.e., $b^{-1}p \in ((R_P)^{\times} \cap (O_{v})_{P}) \setminus ((O_{v})_{P})^{\times}$, a contradiction.

Now, define $ \mathcal Z=\{ I
\vartriangleleft R \mid I \cap O_{v}=P\}$. By the previous paragraph, $\mathcal Z$ is not empty; finally, a standard Zorn's Lemma argument
finishes the proof.

(e) $\Rightarrow$ (d). If $R$ is not faithful over $O_{v}$ then there exists a nonzero $a \in O_{v}$ such that $a \cdot 1_R=0$. In this case,
it is clear that there is no ideal in $R$ lying over the prime ideal $\{ 0 \} $ in $O_{v}$.
If there exists $P \in \text{Spec}(O_{v})$
 such that $(R_P)^{\times} \cap (O_{v})_{P} \nsubseteq ((O_{v})_{P})^{\times}$, we take $p \in (R_P)^{\times} \cap (O_{v})_{P} \setminus ((O_{v})_{P})^{\times}$; then $PR=R$ and there is no prime ideal in $R$ lying over $P$.

\end{proof}

We note that property (d) is strictly weaker than property (b). For example, let $v$ denote a $p-$adic valuation
on $\Bbb Q$ and let $\Bbb Z_{p}$ denote the corresponding valuation domain. Let $\Bbb Z_{p}[x]$ denote the polynomial ring over $\Bbb Z_{p}$ and let $R=\Bbb Z_{p}[x]/<px>$. Clearly, $R$ is not torsion-free over $\Bbb Z_{p}$ and it can be easily seen that $R$ satisfies property (d) (or (e)).





\subsection{Incomparability}

In this subsection we show that if $R$ is a torsion-free $O_{v}-$algebra and
$[R \otimes_{O_{v}}F:F] < \infty $ then $R$ satisfies INC over $O_{v}$. We deduce that
given $P \in \text {Spec}(O_{v})$, the number of prime ideals of $R$ lying over $P$ does not exceed
$[R \otimes_{O_{v}}F:F]$. Finally, we obtain a bound on the size of $\text {Spec}(R)$.

   Let $A$ be a ring, let $U$ be an $A-$module, and let $T \subseteq U$; we denote
$sp( T)=\{ \sum_{i=1}^{k} \alpha_{i}t_{i} \mid \alpha_{i} \in
A, t_{i} \in T, k \in \Bbb N\}$. We say that $T$ is
$A-$linearly independent if for any subset $\{t_{i}\}_{1 \leq i \leq k}
\subseteq T$, $\sum_{i=1}^{k} \alpha_{i}t_{i}=0$ (for $\alpha_{i}
\in A$) implies $\alpha_{i}=0$ for every $1 \leq i \leq k$.
We say that $T$ is a set of generators of
$U$ over $A$ (or that $T$ generates $U$ over $A$) if $U=sp(T)$.


Let $R$ be a torsion-free algebra over a commutative ring $S$ and consider the $F-$algebra $R \otimes_{S}F$. A subset $\{ r_i \}_{i \in I}$ of $R$
is $S-$linearly independent iff  $\{ r_i \otimes 1\}_{i \in I}$ is linearly independent over $F$.
In particular, every $S-$linearly independent set of elements of $R$
is finite iff $R \otimes_{S}F$ is a finite dimensional $F-$algebra.


The following lemma was proved in [Sa1] in a less general
form (see [Sa1, Lemma 2.5]). We shall prove it here for the reader's convenience.


\begin{lem} \label{R torsion-free implies residue degree less} Let $R$ be a torsion-free algebra over $O_{v}$. Then $$[R/I_{v}R:O_{v}/I_{v}]\leq
[R \otimes_{O_{v}}F:F].$$
\end{lem}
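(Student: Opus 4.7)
The plan is to lift an $O_v/I_v$-linearly independent family from $R/I_vR$ to an $F$-linearly independent family in $R \otimes_{O_v} F$; the inequality on dimensions then follows by choosing such a family of maximum cardinality. Concretely, I would show that if $r_1,\ldots,r_n \in R$ have images $\bar r_1,\ldots,\bar r_n$ in $R/I_vR$ that are linearly independent over $O_v/I_v$, then $r_1 \otimes 1,\ldots, r_n \otimes 1$ are linearly independent over $F$ in $R \otimes_{O_v} F$.

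To prove this implication, I would argue by contradiction: suppose $\sum_{i=1}^{n} c_i (r_i \otimes 1) = 0$ in $R \otimes_{O_v} F$ with $c_i \in F$ not all zero. Using that $v$ is a valuation on $F$, I would pick an index $i_0$ with $v(c_{i_0}) = \min_i v(c_i)$ and divide the relation through by $c_{i_0}$. Setting $d_i = c_i/c_{i_0}$, each $d_i$ lies in $O_v$ and $d_{i_0} = 1$. Rewriting yields $(\sum_i d_i r_i) \otimes 1 = 0$ in $R \otimes_{O_v} F$. Since $R$ is torsion-free over $O_v$, the natural map $R \hookrightarrow R \otimes_{O_v} F$ is injective (as noted earlier in the excerpt), so $\sum_i d_i r_i = 0$ in $R$. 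Reducing modulo $I_v R$ then gives $\sum_i \bar d_i \bar r_i = 0$ in $R/I_vR$, but $\bar d_{i_0} = \bar 1 \neq 0$ in $O_v/I_v$, contradicting the assumed linear independence of the $\bar r_i$.

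The only genuine technical step is the valuation-theoretic rescaling by $c_{i_0}$, which simultaneously places all coefficients inside $O_v$ and preserves nontriviality by keeping at least one coefficient equal to a unit; this is where the hypothesis that $O_v$ is a valuation domain is really used, as opposed to a general integral domain with fraction field $F$. Everything else reduces to routine manipulation of the tensor product and the injectivity coming from torsion-freeness. Having shown that any $O_v/I_v$-linearly independent subset of $R/I_vR$ has cardinality at most $[R \otimes_{O_v} F : F]$, the inequality $[R/I_vR : O_v/I_v] \leq [R \otimes_{O_v} F : F]$ follows at once, with no finiteness assumption needed on either side.
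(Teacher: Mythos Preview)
Your proof is correct and follows essentially the same approach as the paper: lift an $O_v/I_v$-independent family to representatives in $R$, rescale a hypothetical dependence relation by the coefficient of minimal $v$-value so that all coefficients lie in $O_v$ with one equal to $1$, and reduce modulo $I_vR$ to obtain a contradiction. The only cosmetic difference is that the paper phrases the intermediate step as showing the $r_i$ are $O_v$-linearly independent in $R$ (and then invokes the equivalence with $F$-independence of the $r_i\otimes 1$, stated just before the lemma), whereas you work directly with $F$-linear independence in $R\otimes_{O_v}F$ and use the injectivity of $R\hookrightarrow R\otimes_{O_v}F$; the underlying argument is the same.
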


\begin{proof} Let $\{ \overline{r_{i}} \}_{i \in I} \subseteq R/I_{v}R$
be linearly independent over $O_{v}/I_{v}$. Let
$\{r_{i} \}_{i \in I} \subseteq R$ be a set of representatives. We prove that
$\{r_{i} \}_{i \in I}$ is $O_{v}-$linearly independent. Assume the
contrary and write $\sum_{i=1}^m
\alpha_{i}r_{i}=0$ where $\{ r_1, r_2, ..., r_m\} \subseteq \{r_{i} \}_{i \in I}$ and $\alpha_{i} \in O_{v}$ are not all zero.
Let $\alpha_{i_{0}}$ denote an element with minimal $v-$value; in particular, $\alpha_{i_{0}} \neq 0$. Then
one may write $\alpha_{i_{0}}(\sum_{i=1}^m
\alpha_{i_{0}}^{-1}\alpha_{i}r_{i})=0$. Now, since $R$ is torsion-free over $O_{v}$,  we have $\sum_{i=1}^m
\alpha_{i_{0}}^{-1}\alpha_{i}r_{i}=0$; specifically, $\sum_{i \neq i_{0}}
\alpha_{i_{0}}^{-1}\alpha_{i}r_{i}+r_{i_{0}}=0$. So, we have
$\sum_{i \neq i_{0}} \overline{\alpha_{i_{0}}^{-1}\alpha_{i}} \overline{r_{i}}+\overline{r_{i_{0}}}= \overline{0}$. This contradicts the
linear independence of $\{ \overline{r_{i}} \}_{i \in I}$.

\end{proof}


\begin{cor} \label{finite residue} Let $R$ be a torsion-free algebra over $O_{v}$. Let $I$ be an ideal of $ R$
lying over $I_{v}$; then $[R/I:O_{v}/I_{v}]\leq [R \otimes_{O_{v}}F:F]$.
\end{cor}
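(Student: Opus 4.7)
The plan is to reduce the statement to Lemma \ref{R torsion-free implies residue degree less} by exhibiting a natural surjection of $O_v/I_v$-vector spaces $R/I_v R \twoheadrightarrow R/I$. The first step is to verify that $I_v R \subseteq I$. Since $I$ is lying over $I_v$, for every $a \in I_v$ we have $a \cdot 1_R \in I$; hence $a r = (a \cdot 1_R)\, r \in I$ for every $r \in R$, and consequently $I_v R \subseteq I$.

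With this containment in hand, the identity map on $R$ descends to a well-defined surjective abelian-group homomorphism
\[
\pi : R/I_v R \twoheadrightarrow R/I.
\]
Both the source and the target are modules over $O_v/I_v$: the source by construction, the target because $I_v \cdot (R/I) = 0$ follows from $I_v R \subseteq I$. The map $\pi$ is $O_v/I_v$-linear by inspection. Since $I_v$ is the maximal ideal of the valuation domain $O_v$, the quotient $O_v/I_v$ is a field, so both $R/I_v R$ and $R/I$ are genuine $O_v/I_v$-vector spaces, and any surjective linear map between vector spaces forces $\dim(R/I) \leq \dim(R/I_v R)$. Combining this with Lemma \ref{R torsion-free implies residue degree less} yields
\[
[R/I : O_v/I_v] \leq [R/I_v R : O_v/I_v] \leq [R \otimes_{O_v} F : F],
\]
which is the desired inequality. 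There is no real obstacle here; the work has already been done in the preceding lemma, and the corollary is essentially the observation that enlarging the ideal from $I_v R$ to any ideal $I$ lying over $I_v$ can only shrink the residue dimension.
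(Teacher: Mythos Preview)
Your proof is correct and follows exactly the paper's approach: the paper's proof consists of the single line ``Use the natural epimorphism $R/I_{v}R \twoheadrightarrow R/I$ and Lemma \ref{R torsion-free implies residue degree less},'' and you have simply filled in the routine details justifying that epimorphism and the resulting dimension inequality.
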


\begin{proof} Use the natural epimorphism $R/I_{v}R \twoheadrightarrow
R/I$ and Lemma \ref{R torsion-free implies residue degree less}. \end{proof}



\begin{lem} \label{INC for primes over Iv} 
Let $R$ be a torsion-free algebra over $O_{v}$ such that $[R \otimes_{O_{v}}F:F] < \infty $. If $Q_{1} \subseteq
Q_{2}$ are prime ideals of $R$ lying over $I_{v}$ then
$Q_{1}=Q_{2}$. In other words, every prime ideal of $R$ lying over $I_{v}$ is maximal.
\end{lem}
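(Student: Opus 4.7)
The plan is to show that $R/Q_1$ is a simple ring. Once this is established, $Q_1$ will be a maximal (two-sided) ideal of $R$, and since $Q_2$ is a proper ideal of $R$ containing $Q_1$, we conclude $Q_1 = Q_2$.

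First I would note that because $Q_1$ lies over $I_v$, the unitary structural map $O_v \to R$ induces an injection of the residue field $k := O_v/I_v$ into $R/Q_1$, giving $R/Q_1$ the structure of a $k$-algebra. Applying Corollary \ref{finite residue} to $Q_1$ (which lies over $I_v$) yields
\[
[R/Q_1 : k] \leq [R \otimes_{O_v} F : F] < \infty,
\]
so $R/Q_1$ is a finite-dimensional $k$-algebra and, in particular, a (left and right) Artinian ring. At the same time, since $Q_1$ is a prime ideal of $R$, the quotient $R/Q_1$ is a prime ring.

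The final step is the well-known structural fact that a prime Artinian ring is simple: the Jacobson radical $J$ of an Artinian ring is nilpotent, and the only nilpotent ideal in a prime ring is zero, so $J=0$ and $R/Q_1$ is semisimple; a prime semisimple ring admits no nontrivial direct-product decomposition and must therefore be simple. Consequently $Q_1$ is maximal, forcing $Q_1 = Q_2$. There is no real obstacle here beyond invoking the classical Wedderburn–Artin machinery in the (possibly non-commutative) setting; the substantive input is the finite-dimensionality of the residue ring supplied by Corollary \ref{finite residue}, together with torsion-freeness, which guarantees that $k$ actually embeds in $R/Q_1$.
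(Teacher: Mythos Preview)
Your proof is correct and follows exactly the paper's approach: pass to $R/Q_1$ over $k=O_v/I_v$, invoke Corollary~\ref{finite residue} for finite-dimensionality, and conclude that a prime finite-dimensional algebra over a field is simple. One small remark: the embedding $k \hookrightarrow R/Q_1$ follows directly from $Q_1$ lying over $I_v$ and does not require torsion-freeness; torsion-freeness enters only through Corollary~\ref{finite residue}.
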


\begin{proof} We pass to $R /Q_{1}$ over $O_{v}/I_{v}$.
$R/Q_{1}$ is a prime algebra which is finite dimensional over the
field $O_{v}/I_{v}$, by Corollary \ref{finite residue}. Thus $R/Q_{1}$
is simple and $Q_{1}=Q_{2}$.
\end{proof}


\begin{thm} \label{INC} \textbf{INC.} Let $R$ be a torsion-free algebra over $O_{v}$ such that $[R \otimes_{O_{v}}F:F] < \infty $.
Then $R$ satisfies INC over $O_{v}$.
\end{thm}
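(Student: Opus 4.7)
The plan is to reduce to the situation already handled by Lemma \ref{INC for primes over Iv} via localization at the contracted prime. Suppose, for contradiction, that there exist $Q_1 \subseteq Q_2$ in $\text{Spec}(R)$ with $Q_1 \cap O_v = Q_2 \cap O_v = P$; I want to show $Q_1 = Q_2$. The guiding idea is that $(O_v)_P$ is again a valuation domain of $F$, with $P(O_v)_P$ as its valuation ideal, so replacing $O_v$ by $(O_v)_P$ turns the given prime $P$ into the valuation ideal — and that is precisely the setting of Lemma \ref{INC for primes over Iv}.

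Next I would localize at the central multiplicative set $S = O_v \setminus P$, setting $R_P := S^{-1}R = R \otimes_{O_v}(O_v)_P$. Since $R$ is torsion-free over $O_v$ we have the inclusion $R \hookrightarrow R \otimes_{O_v} F$, and localizing preserves this, giving that $R_P$ is torsion-free over $(O_v)_P$. Moreover $R_P \otimes_{(O_v)_P} F \cong R \otimes_{O_v} F$, so the $F$-dimension hypothesis is inherited. Because $S$ is central, the standard correspondence shows that $Q_1 R_P \subseteq Q_2 R_P$ are prime ideals of $R_P$, both lying over $P(O_v)_P$ (the valuation ideal of the valuation of $F$ associated with $(O_v)_P$).

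Now I would apply Lemma \ref{INC for primes over Iv} to the $(O_v)_P$-algebra $R_P$: any prime of $R_P$ lying over $P(O_v)_P$ is maximal. Thus $Q_1 R_P = Q_2 R_P$, and reversing the localization (using that $Q_i \cap S = \emptyset$, since $Q_i \cap O_v = P$) gives $Q_1 = Q_2$, contradicting properness.

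The only subtle point is the last step: I must verify that Lemma \ref{INC for primes over Iv} is genuinely applicable to $(O_v)_P$ as the valuation domain of some valuation on $F$ and not just to the original $v$. Since $O_v$ is a valuation domain and $P \in \text{Spec}(O_v)$, $(O_v)_P$ is automatically a valuation domain of $F$ whose valuation ideal is $P(O_v)_P$, so nothing beyond a routine check is required. The rest of the argument is bookkeeping about how the hypotheses — torsion-freeness and finite $F$-dimension — travel through localization at a central multiplicative subset.
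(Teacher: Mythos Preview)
Your proposal is correct and follows essentially the same route as the paper: localize at $S=O_v\setminus P$, observe that $(O_v)_P$ is again a valuation domain of $F$ with valuation ideal $P(O_v)_P$, check that torsion-freeness and finite $F$-dimension pass to $R_P$, apply Lemma~\ref{INC for primes over Iv}, and contract back. The only cosmetic quibble is that framing it as a proof by contradiction is unnecessary; you are directly showing that $Q_1\subseteq Q_2$ with equal contractions forces $Q_1=Q_2$, which is exactly the INC statement.
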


\begin{proof} Let $Q_{1} \subseteq
Q_{2}$ be prime ideals of $R$ lying over $P \in \text {Spec}(O_{v})$. We
localize $O_{v}$ and $R$ at $P$ and consider $R_{P} $ as an algebra over the valuation domain $(O_v)_{P}$.
Note that $R_{P} $ is torsion-free over $(O_v)_{P}$ and $R_{P} \otimes_{O_{v}}F$ is a finite dimensional $F-$ algebra.
Now, $(Q_{1})_{P} \subseteq (Q_{2})_{P}$ are prime ideals of $R_{P}$ lying over the
valuation ideal $P=P_{P}$ of $(O_{v})_{P}$; thus by Lemma \ref{INC
for primes over Iv} $(Q_{1})_{P} = (Q_{2})_{P}$. Therefore
$Q_{1}=Q_{2}$.

\end{proof}

\begin{cor} \label{QP is a maximal ideal of RP} Notation and assumptions as in Theorem \ref{INC}. Let $P \in \text {Spec}(O_{v})$ and let $Q \in \text {Spec}(R)$ lying over $P$;
then $Q_{P}$ is a maximal ideal of $R_{P}$.\end{cor}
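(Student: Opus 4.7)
The plan is essentially to re-run, and then stop half-way through, the localization argument used in the proof of Theorem \ref{INC}. Starting from $Q \in \mathrm{Spec}(R)$ with $Q \cap O_{v} = P$, I would first localize $R$ and $O_{v}$ at the multiplicative set $O_{v} \setminus P$. Since localization at a central multiplicative set preserves torsion-freeness and commutes with tensor products, the $(O_{v})_{P}$-algebra $R_{P}$ is again torsion-free over the valuation domain $(O_{v})_{P}$, and satisfies
\[
[R_{P} \otimes_{(O_{v})_{P}} F : F] = [R \otimes_{O_{v}} F : F] < \infty.
\]
Thus $R_{P}$ over $(O_{v})_{P}$ meets all hypotheses of Lemma \ref{INC for primes over Iv} and Theorem \ref{INC}.

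The next step is to check that $Q_{P}$ is a prime ideal of $R_{P}$ lying over the valuation ideal $P_{P}$ of $(O_{v})_{P}$. Primeness is immediate because $Q$ is prime in $R$ and $Q \cap (O_{v} \setminus P) \subseteq P \cap (O_{v} \setminus P) = \emptyset$. For the lying-over condition, I would chase the definition: an element $s/t \in (O_{v})_{P}$ (with $t \in O_{v} \setminus P$) maps into $Q_{P}$ iff $s \cdot 1_{R}$ becomes an element of $Q_{P}$ after clearing denominators, and this, combined with $Q \cap O_{v} = P$ and the fact that units of $(O_{v})_{P}$ are precisely those with numerator outside $P$, forces $s \in P$; conversely $s \in P$ gives $s \cdot 1_{R} \in Q$ and hence $s/t \in Q_{P}$.

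Finally, since $P_{P}$ is the valuation ideal of the valuation domain $(O_{v})_{P}$, Lemma \ref{INC for primes over Iv} applied to $R_{P}$ over $(O_{v})_{P}$ tells us that every prime of $R_{P}$ lying over $P_{P}$ is maximal. In particular $Q_{P}$ is maximal in $R_{P}$, which is the desired conclusion.

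There is no real obstacle here; the only point that requires a moment's care is the verification that $Q_{P}$ does lie over $P_{P}$ in the precise sense of the definition given in the Basic definitions subsection (taking into account that $R$ need only be faithful up to the ambiguities in the lying-over notation). Everything else is a direct invocation of previously established results.
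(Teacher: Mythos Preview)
Your proposal is correct and follows exactly the approach implicit in the paper: the corollary is stated without proof because it is extracted directly from the localization argument in the proof of Theorem \ref{INC}, where one passes to $R_{P}$ over $(O_{v})_{P}$ and invokes Lemma \ref{INC for primes over Iv} to conclude that primes of $R_{P}$ lying over the valuation ideal $P_{P}$ are maximal. You have simply made that extraction explicit.
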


We note that the assumptions in Theorem \ref{INC} are necessary and one cannot omit any of them, as demonstrated in the following easy examples:


\begin{ex} Let $O_{v}$ be a valuation domain and let $R=O_{v}[x]$ denote the polynomial ring over $O_{v}$.
Then $<0> \subset <x>$ are prime ideals of $R$ lying over $\{ 0\} \in \text {Spec}(O_{v})$.\end{ex}

\begin{ex} Let $p \in \Bbb N$ be a prime number and consider the $p-$adic valuation on $\Bbb Q$. Let $\Bbb Z_{p}$ denote the
valuation domain and let $$R=\Bbb Z_{p}[\sqrt{p},\sqrt[3]{p}]/<p\sqrt{p}>.$$ It is clear that $R$ is torsion over $\Bbb Z_{p}$ and
every $\Bbb Z_{p}-$linearly independent set of elements of $R$ is finite. Finally, the chain
$\sqrt{p}R \subset \sqrt{p}R+\sqrt[3]{p}R$ is a chain of prime ideals of $R$ lying over the prime ideal $p \Bbb Z_{p}$.\end{ex}

Let $P$ be a prime ideal of $O_{v}$; we denote by $\mathcal{Q}_{P}$ the set of all prime ideals of $R$ that are
lying over $P$. Namely, $$\mathcal{Q}_{P}= \{Q \in \text {Spec}(R) | Q \cap O_{v}=P \} .$$

\begin{lem}  \label{set of prime ideals lying over
I_v has leq n elements} Notation and assumptions as in Theorem \ref{INC}.  
Then $$| \mathcal{Q}_{I_{v}} | \leq [R \otimes_{O_{v}}F:F].$$
\end{lem}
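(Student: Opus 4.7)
The plan is to pass to the finite-dimensional residue algebra $\bar R := R/I_{v}R$ over the residue field $k := O_{v}/I_{v}$ and to identify $\mathcal{Q}_{I_{v}}$ with the set of maximal two-sided ideals of $\bar R$; the estimate then follows from a Chinese Remainder Theorem argument bounding the number of such maximal ideals by $\dim_{k} \bar R$.

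First, I would verify that the correspondence theorem identifies $\mathcal{Q}_{I_{v}}$ with $\text{Spec}(\bar R)$. Every $Q \in \mathcal{Q}_{I_{v}}$ satisfies $I_{v} \cdot 1_{R} \subseteq Q$ and hence $I_{v}R \subseteq Q$. Conversely, any prime $Q$ of $R$ containing $I_{v}R$ yields $I_{v} \subseteq Q \cap O_{v} \in \text{Spec}(O_{v})$, and since $O_{v}$ is local and $Q \cap O_{v}$ is a proper prime ideal, $Q \cap O_{v} = I_{v}$; that is, $Q \in \mathcal{Q}_{I_{v}}$. By Lemma \ref{INC for primes over Iv}, every $Q \in \mathcal{Q}_{I_{v}}$ is a maximal two-sided ideal of $R$, so this bijection restricts to a bijection between $\mathcal{Q}_{I_{v}}$ and the set of maximal two-sided ideals of $\bar R$. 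Finally, by Lemma \ref{R torsion-free implies residue degree less}, $\dim_{k} \bar R \leq [R \otimes_{O_{v}}F : F]$.

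Next, I would bound the number $t$ of distinct maximal two-sided ideals $M_{1}, \ldots, M_{t}$ of $\bar R$ by $\dim_{k} \bar R$. For $i \neq j$, maximality forces $M_{i} + M_{j} = \bar R$, so these ideals are pairwise coprime, and the noncommutative Chinese Remainder Theorem yields a surjective $k$-algebra homomorphism
$$\bar R \twoheadrightarrow \prod_{i=1}^{t} \bar R / M_{i}.$$
Each factor is a nonzero $k$-algebra, so $\dim_{k}(\bar R / M_{i}) \geq 1$, whence
$$t \leq \dim_{k} \prod_{i=1}^{t} \bar R/M_{i} \leq \dim_{k} \bar R \leq [R \otimes_{O_{v}}F : F],$$
which is the desired bound.

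The only point requiring care is the noncommutative CRT step, but this is standard for pairwise coprime two-sided ideals. Equivalently, one could pass to the semisimple quotient $\bar R/J(\bar R)$ and invoke Wedderburn--Artin: the simple factors of $\bar R/J(\bar R)$ are in bijection with the maximal two-sided ideals of $\bar R$, and each contributes at least $1$ to the $k$-dimension. The substantive content of the lemma is already packaged in Lemmas \ref{R torsion-free implies residue degree less} and \ref{INC for primes over Iv}; the present statement is a short formal consequence.
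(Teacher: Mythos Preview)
Your proof is correct and follows essentially the same approach as the paper. The paper also invokes Lemma~\ref{INC for primes over Iv} to see that every $Q \in \mathcal{Q}_{I_v}$ is maximal, applies the Chinese Remainder Theorem to any finite collection $\{Q_j\}_{1 \le j \le m} \subseteq \mathcal{Q}_{I_v}$ to get $R/\bigcap_j Q_j \cong \bigoplus_j R/Q_j$, and then bounds $\sum_j [R/Q_j : O_v/I_v] \le [R \otimes_{O_v} F : F]$ via Corollary~\ref{finite residue}; the only cosmetic difference is that you pass to $\bar R = R/I_vR$ first and bound by $\dim_k \bar R$ (Lemma~\ref{R torsion-free implies residue degree less}), whereas the paper quotients directly by $\bigcap_j Q_j$ and uses Corollary~\ref{finite residue}.
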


\begin{proof} First note that by Lemma \ref{INC for primes over Iv} every $Q \in \mathcal{Q}_{I_{v}}$ is
a maximal ideal of $R$. Let $\{ Q_{j}\}_{1 \leq j \leq m}
\subseteq \mathcal{Q}_{I_{v}}$. Then, by Lemma \ref{finite
residue}, $$[R \otimes_{O_{v}}F:F] \geq [R/ \cap_{j=1}^{m} Q_{j}:
O_{v}/I_{v}]=[\oplus_{j=1}^{m}
R/Q_{j}:O_{v}/I_{v}]=$$$$\sum_{j=1}^{m}[R/Q_{j}:O_{v}/I_{v}] \geq
m.$$
\end{proof}

\begin{lem} Notation and assumptions as in Theorem \ref{INC} and let $P \in \text {Spec}(O_{v})$. 
Then $| \mathcal{Q}_{P} | \leq [R \otimes_{O_{v}}F:F]$.
\end{lem}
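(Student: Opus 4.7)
The plan is to reduce to the previous lemma by localizing at $P$. Specifically, I would consider the $(O_v)_P$-algebra $R_P$. First I would verify that $(O_v)_P$ is itself a valuation domain, whose field of fractions is $F$ and whose valuation ideal is precisely $P_P$ (the maximal ideal of $(O_v)_P$). Next I would check that $R_P$ inherits the hypotheses of Theorem \ref{INC}: localization preserves torsion-freeness, so $R_P$ is torsion-free over $(O_v)_P$; moreover, using associativity of tensor product,
\[
R_P \otimes_{(O_v)_P} F \;\cong\; R \otimes_{O_v} (O_v)_P \otimes_{(O_v)_P} F \;\cong\; R \otimes_{O_v} F,
\]
so $[R_P \otimes_{(O_v)_P} F : F] = [R \otimes_{O_v} F : F] < \infty$.

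With these facts in hand, I would invoke the standard one-to-one correspondence between prime ideals of $R$ disjoint from $O_v \setminus P$ and prime ideals of $R_P$, restricted to the primes lying over $P$ (resp. over $P_P$). Under this bijection $\mathcal{Q}_P$ corresponds to the set of prime ideals of $R_P$ lying over $P_P$, i.e. to $\mathcal{Q}_{P_P}$ computed for the algebra $R_P$ over $(O_v)_P$. Since $P_P$ plays the role of the valuation ideal in $(O_v)_P$, Lemma \ref{set of prime ideals lying over I_v has leq n elements} applied to $R_P$ over $(O_v)_P$ yields
\[
|\mathcal{Q}_{P_P}| \;\leq\; [R_P \otimes_{(O_v)_P} F : F].
\]

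Combining the bijection with the dimension equality gives $|\mathcal{Q}_P| \leq [R \otimes_{O_v} F : F]$, as desired. The only subtle point is making sure the correspondence between primes of $R$ and primes of $R_P$ really does identify $\mathcal{Q}_P$ with $\mathcal{Q}_{P_P}$; this is routine because contraction along $R \to R_P$ sends a prime lying over $P_P$ to a prime lying over $P_P \cap O_v = P$, and conversely any prime of $R$ lying over $P$ is disjoint from $O_v \setminus P$ and hence extends to a prime of $R_P$ lying over $P_P$. I do not expect any genuine obstacle; the content of the lemma is really just the localization principle combined with the $P = I_v$ case already handled.
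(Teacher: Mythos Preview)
Your proposal is correct and follows essentially the same approach as the paper: localize at $P$, verify that $R_P$ over $(O_v)_P$ satisfies the hypotheses of Theorem~\ref{INC}, identify $\mathcal{Q}_P$ with the primes of $R_P$ lying over the valuation ideal $P_P$, and apply Lemma~\ref{set of prime ideals lying over I_v has leq n elements}. The paper's version is terser (it cites Corollary~\ref{QP is a maximal ideal of RP} for the fact that each $Q_P$ is maximal in $R_P$ and then invokes the same lemma), but the content is identical.
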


\begin{proof} By Corollary \ref{QP is a maximal ideal of RP}, for every $Q \in \mathcal{Q}_{P}$, $Q_{P}$ is
a maximal ideal of $R_{P}$. Obviously $|\{ Q_{P}\}_{Q \in \mathcal{Q}_{P}}|=|\mathcal{Q}_{P}|$; thus by Lemma \ref{set of prime ideals lying over
I_v has leq n elements} $$| \mathcal{Q}_{P} | \leq [R_{P} \otimes_{O_{v}}F:F]
=[R \otimes_{O_{v}}F:F].$$
\end{proof}

\begin{prop} \label{specR is bounded from above} Notation and assumptions as in Theorem \ref{INC}. Then $$ |\text {Spec}(R)| \leq [R \otimes_{O_{v}}F:F] \cdot
|k-dimO_{v}|.$$

\end{prop}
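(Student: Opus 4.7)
The plan is to reduce the bound on $|\text{Spec}(R)|$ to a fiberwise count over $\text{Spec}(O_v)$, where each fiber has already been controlled by the preceding lemma.

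First I would use the structure map. Since $R$ is an algebra over $O_v$, there is a unitary homomorphism $f: O_v \to R$, $a \mapsto a \cdot 1_R$, and as observed at the start of Subsection 1.2, for every $Q \in \text{Spec}(R)$ the contraction $Q \cap O_v = f^{-1}[Q]$ is a prime ideal of $O_v$. Hence the contraction map gives a partition
\[ \text{Spec}(R) = \bigsqcup_{P \in \text{Spec}(O_v)} \mathcal{Q}_P. \]

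Next I would apply the preceding lemma. Under the standing assumptions of Theorem \ref{INC}, $|\mathcal{Q}_P| \leq [R \otimes_{O_v} F : F]$ for every $P \in \text{Spec}(O_v)$. Taking cardinalities in the disjoint decomposition and summing termwise (cardinal arithmetic) gives
\[ |\text{Spec}(R)| \;=\; \sum_{P \in \text{Spec}(O_v)} |\mathcal{Q}_P| \;\leq\; |\text{Spec}(O_v)| \cdot [R \otimes_{O_v} F : F]. \]

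The last step is to replace $|\text{Spec}(O_v)|$ by $|k\text{-dim}\, O_v|$. Since $O_v$ is a valuation domain, $\text{Spec}(O_v)$ is totally ordered under inclusion, so it coincides with a maximal chain of prime ideals of $O_v$; in particular its cardinality is precisely the size of a maximal chain in $\text{Spec}(O_v)$, which is the Krull dimension of $O_v$ in the cardinal sense used in the statement. Substituting this into the previous inequality yields the claimed bound. The main (and really only) subtlety is this final identification between $|\text{Spec}(O_v)|$ and $|k\text{-dim}\, O_v|$; it relies crucially on the fact that prime ideals of a valuation domain form a chain, so no separate combinatorial estimate on $\text{Spec}(O_v)$ is needed beyond invoking the total ordering.
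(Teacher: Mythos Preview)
Your proposal is correct and follows essentially the same approach as the paper: decompose $\text{Spec}(R)$ into the fibers $\mathcal{Q}_P$ over $\text{Spec}(O_v)$, bound each fiber by $[R\otimes_{O_v}F:F]$ via the preceding lemma, and then use that $\text{Spec}(O_v)$ is a chain to identify its cardinality with $|k\text{-}\dim O_v|$. The paper's proof is terser but identical in content; your added remark about the final identification being the only place where the valuation-domain hypothesis on $O_v$ enters is accurate and worth making explicit.
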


\begin{proof} 
It is clear that $\text {Spec}(R)=\cup_{P \in \text {Spec}(O_{v})} \mathcal{Q}_{P}$. Now, by the previous lemma, $|\mathcal{Q}_{P}|
\leq [R \otimes_{O_{v}}F:F]$ for every $P \in \text {Spec}(O_{v})$.\end{proof}

We now state a generalization of Theorem 5.21, which was proven in [Sa1]
regarding the size of the prime spectrum of a \qv ring.
In [Sa1, Section 5] 
we also assumed that $M_{w}$ is a
torsion group over $\Gamma_{v}$ (see the basic notation in Remark \ref{remark on sa1}). We obtained an upper and a lower bound on the
size of the prime spectrum of the \qv ring. We shall now see that one can
obtain similar results for any torsion-free $O_{v}-$algebra $R$ such that $(R^{\times} \cap O_{v}) \subseteq O_{v}^{\times}$
and $[R \otimes_{O_{v}}F:F]< \infty$.

\begin{thm} \label{bound specR} Notation and assumptions as in Theorem \ref{INC} and assume in addition that
$(R^{\times} \cap O_{v}) \subseteq O_{v}^{\times}$. Then $$|k-dimO_{v}| \leq |\text {Spec}(R)| \leq [R \otimes_{O_{v}}F:F] \cdot
|k-dimO_{v}|.$$

\end{thm}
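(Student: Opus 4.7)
The upper bound $|\text{Spec}(R)| \leq [R \otimes_{O_{v}}F:F] \cdot |k-dimO_{v}|$ is already established as Proposition \ref{specR is bounded from above}, so the task reduces to proving the lower bound $|k-dimO_{v}| \leq |\text{Spec}(R)|$.

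Under the additional assumption $(R^{\times} \cap O_{v}) \subseteq O_{v}^{\times}$ together with $R$ torsion-free over $O_{v}$, condition (b) of Proposition \ref{almost equivalent conditions LO} is satisfied, and therefore so is condition (e); that is, $R$ satisfies LO over $O_{v}$. Moreover, since $R$ is torsion-free, Theorem \ref{R torsion-free over Ov satisfies GD} (referred to in the discussion following Lemma \ref{Sa1, Lemma 9.25}) provides that $R$ satisfies GD over $O_{v}$.

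The plan is to lift a maximal chain in $\text{Spec}(O_{v})$ to a chain of equal cardinality in $\text{Spec}(R)$. Since $O_{v}$ is a valuation domain, $\text{Spec}(O_{v})$ is totally ordered by inclusion and contains a maximal chain $\{P_{\alpha}\}$ of cardinality $|k-dimO_{v}|$. I would proceed by transfinite recursion from the top downward: start by applying LO to produce some $Q_{0} \in \text{Spec}(R)$ lying over the maximal ideal of $O_{v}$; at each successor step apply GD to descend to a prime lying over the next $P_{\alpha}$ in the chain; at a limit step define the new prime to be the intersection of the primes constructed so far. That intersection is again prime (the intersection of a chain of prime ideals is prime in any ring, by the standard initial-segment argument applied to $\{\alpha : I \subseteq Q_{\alpha}\}$ and $\{\alpha : J \subseteq Q_{\alpha}\}$), and by maximality of the chosen chain $\{P_{\alpha}\}$ it contracts to the corresponding intersection in $O_{v}$, which is the intended $P_{\alpha}$. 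The lifted primes are pairwise distinct because they lie over pairwise distinct primes of $O_{v}$, so $|\text{Spec}(R)| \geq |k-dimO_{v}|$.

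The main delicacy lies at the limit steps of the recursion: one must verify both that a chain-intersection of prime ideals of the (possibly non-commutative) ring $R$ remains prime, and that this intersection lies over the intended prime of $O_{v}$. Both facts are routine given the right setup — namely, starting from a \emph{maximal} chain in $\text{Spec}(O_{v})$, which is automatically closed under chain intersections.
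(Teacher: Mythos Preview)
Your argument is correct, but it is considerably more elaborate than what the paper actually does. The paper's proof of the lower bound is a one-line appeal to Proposition~\ref{almost equivalent conditions LO}: under the hypotheses, $R$ satisfies LO over $O_{v}$, so for each $P \in \text{Spec}(O_{v})$ one may choose some $Q \in \text{Spec}(R)$ with $Q \cap O_{v} = P$; distinct $P$'s yield distinct $Q$'s, and hence $|\text{Spec}(O_{v})| \leq |\text{Spec}(R)|$. No use of GD, no chain lifting, no transfinite recursion.

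Your route, by contrast, invokes GD and carries out a transfinite descent to produce an actual \emph{chain} in $\text{Spec}(R)$ covering $\text{Spec}(O_{v})$. That is a strictly stronger conclusion than the bare cardinality inequality requires; it is essentially the content of the GGD-type results discussed later in Section~3 (compare the corollary following Theorem~\ref{SGB}). So what you have sketched is valid and yields more, but for the statement at hand LO alone already suffices, since the bound concerns only the size of $\text{Spec}(R)$ and not the existence of any particular chain structure inside it.
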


\begin{proof} The first inequality is by Proposition \ref{almost equivalent conditions LO}.
The second inequality is by Proposition \ref{specR is bounded from above}.
\end{proof}

Note that the assumptions in Theorem \ref{INC} do not imply that $R$ satisfies LO over $O_{v}$; thus we need to assume the additional assumption in
Theorem \ref{bound specR}. As an easy example, take $R$ to be any subring of $F$ strictly containing $O_{v}$; then
$R$ is a valuation domain of $F$, satisfies the assumptions of Theorem \ref{INC} and clearly does not satisfy LO over $O_{v}$. Moreover, the lower bound given in Theorem \ref{bound specR} is not valid.

We deduce from Proposition \ref{specR is bounded from above},

\begin{cor} \label{If k-dimOv<infty then the prime spectrum of R is finite}
Notation and assumptions as in Theorem \ref{INC}. If $k-dimO_{v}< \infty$ then the prime spectrum of $R$ is finite;
in particular, $R$ has a finite number of maximal ideals.
\end{cor}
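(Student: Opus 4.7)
The plan is to apply Proposition \ref{specR is bounded from above} directly. That proposition already supplies the bound $|\text{Spec}(R)| \leq [R \otimes_{O_{v}}F:F] \cdot |k-dimO_{v}|$ under the hypotheses of Theorem \ref{INC}. Under those hypotheses the first factor is finite by assumption, so the task reduces to showing that the second factor is finite whenever $k-dimO_{v}<\infty$.

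For that second step I would invoke the structural fact about valuation domains recalled near the end of the introduction: the prime ideals of $O_{v}$ correspond bijectively to the isolated subgroups of $\Gamma_{v}$ and, in particular, are totally ordered by inclusion. Consequently $|\text{Spec}(O_{v})| = k-dimO_{v}+1$ whenever the Krull dimension is finite, so the right-hand side of the bound from Proposition \ref{specR is bounded from above} is a product of two finite quantities. This immediately yields finiteness of $\text{Spec}(R)$, and the ``in particular'' clause about maximal ideals follows since the maximal spectrum is a subset of $\text{Spec}(R)$.

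There is essentially no obstacle to overcome here; all the genuine work has been packaged into Lemma \ref{set of prime ideals lying over I_v has leq n elements} and Proposition \ref{specR is bounded from above}. The present corollary is purely a bookkeeping step that exploits a feature specific to valuation domains, namely that finite Krull dimension coincides with having a finite prime spectrum — a coincidence that would fail for more general base rings, where finite dimension need not imply finiteness of the spectrum.
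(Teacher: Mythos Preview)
Your proposal is correct and follows exactly the paper's approach: the paper simply states ``We deduce from Proposition \ref{specR is bounded from above}'' before the corollary, with no further argument. Your added remark that the primes of a valuation domain are totally ordered---so that finite Krull dimension forces $|\text{Spec}(O_v)|$ to be finite---makes explicit the one point the paper leaves implicit.
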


Let $R$ be a torsion-free algebra over $O_{v}$ with $[R \otimes_{O_{v}}F:F] < \infty $. We concluded in
Corollary \ref{If k-dimOv<infty then the prime spectrum of R is finite} that $k-dimO_{v}< \infty$ implies that the set of
maximal ideals of $R$ is finite. However, the
other direction is not valid. For example, let $O_{v}
$ be a valuation domain with $k-dimO_{v}= \infty$ and let $R=M_{n}(O_{v})$; then $R$ has only one maximal ideal.
We will show in a subsequent paper
that when $E/F$ is a finite dimensional field extension and $R$ is a subring of $E$
lying over $O_{v}$, then the set of maximal
ideals of $R$ is always finite (even when $k-dimO_{v}=
\infty$).


\section{Going down and strong going between}


In this section we prove that a torsion-free algebra satisfies GD over a valuation domain.
We deduce that an algebra over a commutative valuation ring satisfies SGB over it.
Finally, we conclude that a torsion-free algebra satisfies GGD (generalized going doan) over a valuation domain.

Using the filter quasi-valuation, one can simplify some of the
proofs and generalize some of the theorems proven in [Sa1] to the non-commuative
case.

We note that one can generalize [Sa1, Lemma 4.11] to the case in which $R$ is an algebra over $O_{v}$.
However, we do not need this generalization for our needs. So, the following lemma is a special case of this generalization,
easily proven for the filter quasi-valuation.

\begin{lem} \label{b outside of Q has value} Let $R$ be an algebra over $O_{v}$. Let $w$ denote
the filter quasi-valuation induced by $(R,v)$.
Let $P \in \text{Spec}(O_{v})$, $Q \in \text {Spec}(R)$ such that $Q \cap O_{v} =
P$. Let $H$ denote the isolated subgroup corresponding to $P$ and
let $b \in R \setminus Q$. Then $w(b) \leq H ^{+}$.
\end{lem}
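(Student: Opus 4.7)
The plan is to unpack $w(b) \leq H^{+}$ directly in terms of Dedekind cuts and then derive a contradiction from $b \notin Q$. Recall that $\mathcal{A} \leq \mathcal{B}$ in $\mathcal{M}(\Gamma_{v})$ means $\mathcal{A}^{L} \subseteq \mathcal{B}^{L}$, so the goal reduces to showing $w(b)^{L} \subseteq (H^{+})^{L}$. Since $b \neq 0$ (as $b \notin Q$) and $w$ is the filter quasi-valuation, we have $w(b)^{L} = v(S_{b}) = (v(S_{b}))^{\geq 0} \cup \{\gamma \in \Gamma_{v} \mid \gamma < 0\}$. The negative part is automatically contained in $(H^{+})^{L}$ because $0 \in H$, so it suffices to check the nonnegative values $v(a)$ with $a \in S_{b}$.

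Suppose, toward a contradiction, that some $a \in S_{b}$ satisfies $v(a) \notin (H^{+})^{L}$. By definition of $S^{+}$, this means there is no $h \in H$ with $v(a) \leq h$; equivalently, $v(a) > h$ for every $h \in H$. Since $0 \in H$ we have in particular $v(a) > 0$, so $a$ lies in the maximal ideal of $O_{v}$. Invoking the correspondence (recalled in the introduction) between prime ideals of $O_{v}$ and isolated subgroups of $\Gamma_{v}$, the ideal $P$ consists precisely of those elements of $O_{v}$ whose value does not lie in $H$; therefore $v(a) > H$ together with $v(a) \geq 0$ yields $a \in P$.

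Now I exploit $a \in S_{b}$, which means $bR \subseteq aR$. In particular $b = b \cdot 1_{R} \in aR$, so $b = a r$ for some $r \in R$ (recall that $a$ acts on $R$ through the algebra structure, so $aR = (a \cdot 1_{R})R$). Thus $b \in PR$. Since $P = Q \cap O_{v}$, the unitary homomorphism $O_{v} \to R$ sends $P$ into $Q$; because $Q$ is an ideal of $R$ we obtain $PR \subseteq Q$. Hence $b \in Q$, contradicting $b \in R \setminus Q$. The only mildly delicate point is reading off $a \in P$ from $v(a) > H$; once this is identified the argument is a short chain of definitions.
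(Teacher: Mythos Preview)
Your proof is correct and follows essentially the same route as the paper: both argue by contradiction, produce an element $a \in O_{v}$ with $v(a) \in w(b)^{L} \setminus (H^{+})^{L}$, observe that $v(a) \notin H$ forces $a \in P$, and conclude $b \in aR \subseteq PR \subseteq Q$. The paper phrases the first step as ``there exists $\alpha \in \Gamma_{v}$ with $w(b) \geq \alpha > H^{+}$'' rather than unpacking $w(b)^{L}=v(S_{b})$ explicitly, but the content is identical.
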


\begin{proof} Assume to the
contrary that $w(b)
> H ^{+}$; then there exists $\alpha \in \Gamma_{v}$
such that $w(b) \geq \alpha > H ^{+}$.
Let $a \in O_{v}$ such that $v(a)=\alpha$; since $\alpha > H ^{+}$, we get $a \in P$. Thus $b \in
aR \subseteq PR \subseteq Q$, a contradiction.

\end{proof}

\





In [Sa1, Lemma 4.12] we proved that $O_{w}$ satisfies GD over $O_{v}$ (for the assumptions made in [Sa1] see Remark \ref{remark on sa1}).
We mentioned there that one can generalize this fact for the case in which $E$ is a finite dimensional $F$-algebra.
We shall now present a much more general result.

\begin{thm} \label{R torsion-free over Ov satisfies GD}\textbf{GD.} Let $ R$ be a torsion-free algebra over $O_{v}$. Then
$R$ satisfies GD over $O_{v}$.
\end{thm}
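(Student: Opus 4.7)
The plan is to combine the filter quasi-valuation $w$ induced by $(R,v)$ (Theorem~\ref{Sa1, Theorem 9.19}), which by Lemma~\ref{Sa1, Lemma 9.25} satisfies $w(cx) = v(c) + w(x)$ for all $c \in O_{v}$ and $x \in R$, with a Zorn's Lemma argument. Fix $P_{1} \subset P_{2}$ in $\text{Spec}(O_{v})$ with corresponding isolated subgroups $H_{1} \supsetneq H_{2}$ of $\Gamma_{v}$, and let $Q_{2} \in \text{Spec}(R)$ with $Q_{2} \cap O_{v} = P_{2}$.

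My first task is to establish $P_{1}R \cap O_{v} = P_{1}$. Suppose for contradiction that $s \in O_{v} \setminus P_{1}$ satisfies $s \cdot 1_{R} \in P_{1}R$; by Remark~\ref{x can be written as x=ar} I may write $s \cdot 1_{R} = ar$ with $a \in P_{1}$ and $r \in R$. Since $v(s) \in H_{1}$ while $v(a) > H_{1}$, a routine valuation calculation produces $b \in P_{1}$ with $a = sb$, so $s(1_{R} - br) = 0$. Torsion-freeness forces $1_{R} = br$, and since $b \in P_{1} \subseteq P_{2} \subseteq Q_{2} \cap O_{v}$ this places $1_{R} = (b \cdot 1_{R})r \in Q_{2}$, contradicting $Q_{2} \neq R$. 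I then apply Zorn's Lemma to the non-empty family $\mathcal{F} := \{I \triangleleft R : P_{1}R \subseteq I \subseteq Q_{2},\ I \cap O_{v} \subseteq P_{1}\}$ (closed under unions of chains), obtaining a maximal element $Q_{1}$; by construction $Q_{1} \cap O_{v} = P_{1}$.

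To verify $Q_{1}$ is prime, suppose $aRb \subseteq Q_{1}$ with $a, b \notin Q_{1}$. In the principal case where both $a, b \in Q_{2}$, each of the strictly larger ideals $Q_{1} + RaR$ and $Q_{1} + RbR$ lies in $Q_{2}$, so by maximality there exist $s_{a}, s_{b} \in O_{v} \setminus P_{1}$ with $s_{a} \cdot 1_{R} \in Q_{1} + RaR$ and $s_{b} \cdot 1_{R} \in Q_{1} + RbR$. Expanding the product of central elements $(s_{a} \cdot 1_{R})(s_{b} \cdot 1_{R}) = (s_{a}s_{b}) \cdot 1_{R}$ and using $aRb \subseteq Q_{1}$ to absorb the cross term shows $(s_{a}s_{b}) \cdot 1_{R} \in Q_{1}$, so $s_{a}s_{b} \in Q_{1} \cap O_{v} = P_{1}$, contradicting primeness of $P_{1}$. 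The remaining case $a \notin Q_{2}$ forces $b \in Q_{2}$ by primeness of $Q_{2}$, and is the main obstacle: the analogous expansion yields $s_{b} \cdot a \in Q_{1}$ only for $s_{b} \in P_{2} \setminus P_{1}$ (since $s_{b} \cdot 1_{R} \in Q_{1} + RbR \subseteq Q_{2}$), so direct $Q_{2}$-cancellation is not available. I expect this case to be resolved by iterating the maximality argument on $Q_{1} + s_{b}R$ and tracking how the filter quasi-valuation interacts with the successive products via Lemma~\ref{Sa1, Lemma 9.25}, ultimately forcing $a \in Q_{1}$ and completing the verification that $Q_{1}$ is a prime ideal lying over $P_{1}$ and strictly contained in $Q_{2}$.
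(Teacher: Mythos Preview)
Your argument has a genuine gap, and you have correctly located it: the primeness verification for $Q_{1}$ breaks down when $a\notin Q_{2}$. Maximality in your family $\mathcal{F}$ only gives information about ideals $I$ with $Q_{1}\subsetneq I\subseteq Q_{2}$, so once $Q_{1}+RaR\not\subseteq Q_{2}$ you lose the leverage needed to produce an element of $O_{v}\setminus P_{1}$ inside $Q_{1}+RaR$. Your proposed salvage (``iterating the maximality argument on $Q_{1}+s_{b}R$'') does not work: from $s_{b}a\in Q_{1}$ with $s_{b}\in P_{2}\setminus P_{1}$ and $a\notin Q_{2}$ there is no evident way to force $a\in Q_{1}$ without already knowing $Q_{1}$ is prime, and the filter quasi-valuation does not supply a missing cancellation here.

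The paper sidesteps this entirely by choosing the right multiplicative obstruction from the start. Instead of constraining $I\subseteq Q_{2}$, it sets $S_{1}=O_{v}\setminus P_{1}$, $S_{2}=R\setminus Q_{2}$, and $S=\{s_{1}s_{2}:s_{1}\in S_{1},\,s_{2}\in S_{2}\}$, which is an $m$-system. The whole proof then reduces to showing $S\cap P_{1}R=\emptyset$, after which any ideal $Q_{1}\supseteq P_{1}R$ maximal with respect to $Q_{1}\cap S=\emptyset$ is automatically prime, contained in $Q_{2}$, and lying over $P_{1}$. The separation $S\cap P_{1}R=\emptyset$ is where the filter quasi-valuation actually earns its keep: for $x\in P_{1}R$ one gets $w(x)>H_{1}^{+}$, while for $y=ab\in S$ Lemma~\ref{b outside of Q has value} gives $w(b)\leq H_{2}^{+}$ and Lemma~\ref{Sa1, Lemma 9.25} gives $w(ab)=v(a)+w(b)\leq H_{1}^{+}$. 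Your first step ($P_{1}R\cap O_{v}=P_{1}$) is correct but is subsumed by this computation; the missing ingredient in your outline is precisely the bound on $w(b)$ for $b\notin Q_{2}$, which is what lets the argument handle elements outside $Q_{2}$ without any case split.
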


\begin{proof} Let $ w$ denote the filter \qv on $R$
induced by $(R,v)$. 
Let $P_{1} \subset P_{2} \in \text {Spec}(O_{v})$ and let
$Q_{2} $ be a prime ideal of $R$ lying over $P_{2}$. We need to prove that there exists $Q_{1} \in \text {Spec}(R)$
lying over $P_{1}$ such that $Q_{1} \subset Q_{2}$. We
denote $S_{1} = O_{v} \setminus P_{1}$, $S_{2} = R \setminus
Q_{2}$ and $$S = \{ s_{1} s_{2} | s_{1} \in S_{1}, s_{2} \in S_{2}
\}.$$

Note that $S_{2}$ is an $m$-system in $R$ and thus $S$ is an
$m$-system in $R$. We shall prove $S \cap P_{1}R = \emptyset$ and
then every ideal $Q_{1}$ which contains $P_{1}R$, maximal with
respect to $S \cap Q_{1} = \emptyset$, is prime. Note that such
$Q_{1}$ satisfies the required properties: $Q_{1}$ is lying over $P_{1}$ and $Q_{1} \subset Q_{2}$.

Let $H_{i} \leq \Gamma_{v}$ $(i=1,2)$ be the isolated subgroups
corresponding to $P_{i}$. Let $x \in P_{1}R$ and write, by Remark \ref{x can be written as x=ar}, $x = pr$ where
$p \in P_{1}$ and $r \in R$. Then
$$w(x) = w(pr)  \geq
w(p \cdot 1_{R}) \geq v(p)  \notin H_{1}.$$
The first inequality is valid since $w$ is the filter \qv induced by $(R,v)$ (note that $w(r) \geq 0$ for all $r \in R$, by [Sa1, Remark 9.18]).
The second inequality is valid by [Sa1, Remark 9.23].
Viewing $v(p)$ inside $\mathcal M (\Gamma_{v})$ we get $$w(x)
\geq v(p) > H_{1}^{+}.$$

Now, let $y \in S$, and write $y=ab$ for $a \in S_{1}$, $b \in S_{2}$.
By Lemma \ref{b outside of Q has value}, $$w(b) \leq H_{2} ^{+}.$$ As
for $a$, $a \in S_{1}$ and thus $v(a) \in H_{1}$. Viewing $v(a)$
inside $\mathcal M (\Gamma_{v})$ we get $v(a) < H_{1} ^{+}$. Since $R$ is torsion-free over $O_{v}$ we have, by Lemma \ref{Sa1, Lemma 9.25},
$$w(ab) = v(a) + w(b) \leq H_{1}^{+} + H_{2} ^{+} =
H_{1}^{+}.$$ So, we have proven that for every $s \in
S, $ $ w(s) \leq H_{1}^{+} $ and for every $x \in
P_{1}R$, $w(x)
>H_{1} ^{+}$. Therefore $$ S \cap P_{1}R = \emptyset.$$

 \end{proof}

It is easy to see that if $R$ is not torsion-free over $O_{v}$ then $R$ does not necessarily satisfy GD over $O_{v}$.
In fact, for any nontrivial valuation domain $O_{v}$ there exists an algebra $R$ that does not satisfy GD over it. Indeed,
take any algebra $R$ that is not faithful over $O_{v}$ (for example, take $R=O_{v}/I$ where $I$ is any nonzero proper ideal of $O_{v}$); then $R$ does not satisfy GD over $O_{v}$, since
there is no prime ideal of $R$ lying over $\{ 0 \}$.

For any ring $R$, we denote by K-dim$R$ the classical Krull
dimension of $R$, by which we mean the maximal length of the
chains of prime ideals of $R$.

\begin{cor} Let $R$ be a torsion-free $O_{v}$-algebra such that $(R^{\times} \cap O_{v}) \subseteq O_{v}^{\times}$ and $R \otimes_{O_{v}}F$ is a finite dimensional $F-$algebra. Then
K-dim$R =$K-dim$O_{v}$.\end{cor}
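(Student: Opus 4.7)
The plan is to prove the two inequalities K-dim$R \leq $ K-dim$O_v$ and K-dim$R \geq$ K-dim$O_v$ separately, using the three pillars already established in the excerpt: LO (Proposition \ref{almost equivalent conditions LO}), GD (Theorem \ref{R torsion-free over Ov satisfies GD}), and INC (Theorem \ref{INC}). Each hypothesis of the corollary is tailored to make one of these apply: torsion-freeness plus the unit condition $(R^{\times}\cap O_v)\subseteq O_v^{\times}$ amount to property (b), which yields LO via (b)$\Rightarrow$(e); torsion-freeness alone gives GD; and torsion-freeness together with $[R\otimes_{O_v}F:F]<\infty$ gives INC.

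For the upper bound K-dim$R \leq$ K-dim$O_v$, I would take any strictly ascending chain $Q_0 \subset Q_1 \subset \cdots \subset Q_n$ in $\text{Spec}(R)$ and contract it to $O_v$. Theorem \ref{INC} guarantees that $Q_i\cap O_v \subset Q_{i+1}\cap O_v$ is strict at each step, so the chain $Q_0\cap O_v \subset \cdots \subset Q_n\cap O_v$ in $\text{Spec}(O_v)$ has the same length $n$. Taking the supremum over $n$ yields the desired inequality.

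For the lower bound K-dim$R \geq$ K-dim$O_v$, I would start from a chain $P_0 \subset P_1 \subset \cdots \subset P_n$ in $\text{Spec}(O_v)$ and build a covering chain in $\text{Spec}(R)$ from the top down. Since $(b)$ holds, Proposition \ref{almost equivalent conditions LO} provides some $Q_n \in \text{Spec}(R)$ lying over $P_n$. Applying Theorem \ref{R torsion-free over Ov satisfies GD} to the pair $P_{n-1}\subset P_n$ and the prime $Q_n$ yields a prime $Q_{n-1}\subset Q_n$ lying over $P_{n-1}$; iterating this step downward $n$ times produces a strictly descending chain $Q_0 \subset Q_1 \subset \cdots \subset Q_n$ in $\text{Spec}(R)$ with each $Q_i$ lying over $P_i$. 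The strict inclusions are automatic since the $P_i$ are distinct. Thus K-dim$R \geq n$, and letting $n$ range over all chain lengths in $\text{Spec}(O_v)$ yields the reverse inequality.

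There is essentially no obstacle here: the content is already distilled in the three theorems, and the corollary amounts to assembling them in the standard LO$+$GD$+$INC pattern that forces equality of Krull dimensions. The only small point to check is that the strict containments claimed at each stage really are strict, but both Theorem \ref{R torsion-free over Ov satisfies GD} and Theorem \ref{INC} are stated with the paper's convention that $\subset$ means proper inclusion, so this is automatic.
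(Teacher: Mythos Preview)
Your proposal is correct and follows essentially the same approach as the paper's own proof: use INC (Theorem \ref{INC}) to obtain K-dim$R \leq$ K-dim$O_v$, and combine LO (Proposition \ref{almost equivalent conditions LO}) with GD (Theorem \ref{R torsion-free over Ov satisfies GD}) to obtain the reverse inequality. The paper states these deductions more tersely, but your expanded chain-level argument is the standard unpacking of exactly the same reasoning.
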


 \begin{proof} By Theorem \ref{INC}, $R$
satisfies INC over $O_{v}$; thus, we get K-dim$R  \leq
$K-dim$O_{v}$. By Proposition \ref{almost equivalent conditions LO}, $R$ satisfies LO over $O_{v}$
and by Theorem \ref{R torsion-free over Ov satisfies GD}, $R$ satisfies GD over $O_{v}$; thus
K-dim$R \geq $K-dim$O_{v}$. \end{proof}









Using Theorem \ref{R torsion-free over Ov satisfies GD}, we can now deduce a useful corollary regarding commutative valuation rings that are not necessarily integral domains.
Recall that a commutative valuation ring $S$ is a commutative ring such that the set of ideals of $S$ is totally ordered with respect to containment.

\begin{cor} \label{R torsion-free over S commutative valuation satisfies GD} Let $S$ be a commutative valuation ring and let $ R$
be an algebra over $S$.
Then $R$ satisfies GD over $S$ iff every minimal prime ideal of $R$ is lying over the minimal prime ideal of $S$.
\end{cor}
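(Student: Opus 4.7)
The forward direction is immediate: assuming GD and letting $Q$ be a minimal prime of $R$ with $P=Q\cap S$, the minimal prime $N$ of $S$ is contained in $P$, and if $N\subsetneq P$ then GD applied to $N\subsetneq P$ together with $Q$ over $P$ would produce a prime of $R$ strictly inside $Q$, contradicting minimality.

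For the reverse direction my plan is to reduce to Theorem \ref{R torsion-free over Ov satisfies GD}. Suppose every minimal prime of $R$ lies over $N$, and let $P_{1}\subsetneq P_{2}$ in $\text{Spec}(S)$ with $Q_{2}\in\text{Spec}(R)$ over $P_{2}$. Choose any minimal prime $Q_{0}\subseteq Q_{2}$ of $R$ (which exists by a routine Zorn argument applied to primes inside $Q_{2}$); by hypothesis $Q_{0}\cap S = N$. Set $\bar S = S/N$, which is a valuation domain (being a commutative valuation ring that is also an integral domain), and $\bar R = R/Q_{0}$, an $\bar S$-algebra.

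The main step I expect to matter is verifying that $\bar R$ is torsion-free over $\bar S$. If $\bar s \bar r = 0$ in $\bar R$ with $\bar s \neq 0$, then lifting yields $s \in S\setminus N$ and $sr \in Q_{0}$; since $s\cdot 1_{R}\in Z(R)$ we have $(s\cdot 1_{R})\,R\,r = R(sr)\subseteq Q_{0}$, and the primeness of $Q_{0}$ forces either $s\cdot 1_{R}\in Q_{0}$ (impossible since $s\notin Q_{0}\cap S = N$) or $r\in Q_{0}$, so $\bar r = 0$. With torsion-freeness in hand, Theorem \ref{R torsion-free over Ov satisfies GD} delivers GD for $\bar R$ over the valuation domain $\bar S$.

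Finally I apply this GD to the chain $P_{1}/N \subsetneq P_{2}/N$ in $\text{Spec}(\bar S)$ (strict because $N\subseteq P_{1}$) and to the prime $Q_{2}/Q_{0}\in\text{Spec}(\bar R)$ over $P_{2}/N$; the resulting intermediate prime pulls back along $R\twoheadrightarrow\bar R$ to a prime $Q_{1}$ with $Q_{0}\subseteq Q_{1}\subsetneq Q_{2}$ and $Q_{1}\cap S = P_{1}$, exactly as required. The only genuine obstacle I anticipate is the torsion-freeness step, but the combination of primeness of $Q_{0}$ and the centrality of the image of $S$ in $R$ dispatches it without calculation; after that the argument is a routine quotient-and-lift reduction to the case already handled by Theorem \ref{R torsion-free over Ov satisfies GD}.
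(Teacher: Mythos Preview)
Your proof is correct and follows essentially the same approach as the paper: pick a minimal prime $Q_{0}\subseteq Q_{2}$, use the hypothesis to get $Q_{0}\cap S=N$, pass to the torsion-free algebra $R/Q_{0}$ over the valuation domain $S/N$, and invoke Theorem~\ref{R torsion-free over Ov satisfies GD}. The only difference is that the paper declares both the forward direction and the torsion-freeness of $R/Q_{0}$ over $S/N$ to be ``clear,'' whereas you supply the short verifications explicitly.
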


\begin{proof} $(\Leftarrow)$ Let $P_{1} \subset P_{2} \in \text {Spec}(S)$ and let
$Q_{2} $ be a prime ideal of $R$ lying over $P_{2}$. Let $Q \subseteq Q_{2}$ be a minimal prime ideal of $R$.
By assumption, $Q$ is lying over
the minimal prime ideal of $S$, say $P$; in particular, $Q \neq Q_{2}$ since $P_{2}$ is not a minimal prime ideal of $S$.
Now, it is clear that $R/Q$ is a torsion-free algebra over the valuation domain $S/P$. Thus, by Theorem \ref{R torsion-free over Ov satisfies GD},
$R/Q$ satisfies GD over $S/P$; hence, there exists a prime ideal $Q_{1}/Q \subset Q_{2}/Q$ such that $Q_{1}/Q $ is lying over $P_{1}/P$.
Therefore, there exists a prime ideal $Q_{1} \subset Q_{2}$ of $R$ lying over $P_{1}$.

$(\Rightarrow)$ It is clear.

 \end{proof}

Using Corollary \ref{R torsion-free over S commutative valuation satisfies GD}, we can now present an example demonstrating the fact that Theorem \ref{R torsion-free over Ov satisfies GD} cannot be reversed. That is, $R$ being a torsion-free algebra over $O_{v}$ is a sufficient condition, but not a necessary condition, for $R$ to satisfy GD over $O_{v}$.

\begin{ex} Let $O_{v}$ be a valuation domain and let $a \in O_{v}$ with $v(a)>0$.
Let $O_{v}[x]$ be the polynomial ring over $O_{v}$ and
let $R=O_{v}[x]/<ax,x^2>$. Clearly, $R$ is not torsion-free over $O_{v}$ and it
is not difficult to see that every minimal prime ideal of $R$ (namely $xR$) is lying over $\{ 0 \} $.

\end{ex}



Let $S$ and $R$ be commutative rings and let $f: S \rightarrow R$ be a unitary ring homomorphism. In [Pi, Propositions 5.2 and 5.7],
Picavet presented some equivalent conditions for $f$ to satisfy SGB. In particular, he showed that $f$ satisfies SGB
iff for all $Q \in \text {Spec}(R)$ the induced map $\overline{f}: S/f^{-1}[Q] \rightarrow R/Q$ satisfies GD.
It is not difficult to see that this property is valid in a more general case.
We give the following lemma without a proof.


\begin{lem} \label{SGB iff GD for all prime hom images} Let $S$ and $R$ be rings (not necessarily commutative) and let
$f: S \rightarrow R$ be a homomorphism (not necessarily unitary) such that $f[S] \subseteq Z(R)$. Then
$f$ satisfies SGB iff for all $Q \in \text {Spec}(R)$ the induced map $\overline{f}: S/f^{-1}[Q] \rightarrow R/Q$ satisfies GD.
\end{lem}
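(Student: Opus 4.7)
The plan is to translate chains of primes back and forth between $R, S$ and their quotients $R/Q$, $S/f^{-1}[Q]$, exploiting the bijection between primes of $R$ containing $Q$ and primes of $R/Q$, and its compatibility with preimages under $f$. Both directions are essentially formal once this correspondence is invoked, so no machinery beyond the definitions is needed.

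For ($\Rightarrow$), I fix $Q \in \text{Spec}(R)$, write $P := f^{-1}[Q]$, and consider a pair $\overline{P_{1}} \subset \overline{P_{2}}$ in $\text{Spec}(S/P)$ together with $\overline{Q_{2}} \in \text{Spec}(R/Q)$ lying over $\overline{P_{2}}$. Lifting yields primes $P \subseteq P_{1} \subset P_{2}$ of $S$ and $Q \subseteq Q_{2}$ of $R$ with $f^{-1}[Q_{2}] = P_{2}$. If $P_{1} = P$, I take $Q_{1} := Q$; this works because $\overline{f}$ is injective, so $\overline{0}$ lies over $\overline{0} = \overline{P_{1}}$. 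Otherwise $P \subset P_{1} \subset P_{2}$ is a strict three-step chain with outer primes $Q \subset Q_{2}$ lying over $P$ and $P_{2}$ respectively, and SGB for $f$ delivers a middle prime $Q_{1}$ with $Q \subset Q_{1} \subset Q_{2}$ lying over $P_{1}$; then $\overline{Q_{1}} := Q_{1}/Q$ is the desired prime in $R/Q$ lying over $\overline{P_{1}}$.

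For ($\Leftarrow$), I start with $P_{1} \subset P_{2} \subset P_{3}$ in $\text{Spec}(S)$ and $Q_{1} \subset Q_{3}$ in $\text{Spec}(R)$, with $Q_{i}$ lying over $P_{i}$ for $i = 1, 3$, and pass to the induced map $\overline{f}: S/P_{1} \to R/Q_{1}$. The chain $\overline{P_{2}} \subset \overline{P_{3}}$ in $\text{Spec}(S/P_{1})$ together with $\overline{Q_{3}} \in \text{Spec}(R/Q_{1})$ lying over $\overline{P_{3}}$ satisfies the hypotheses of GD for $\overline{f}$, producing $\overline{Q_{2}} \subset \overline{Q_{3}}$ lying over $\overline{P_{2}}$. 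Pulling back to $R$ yields the desired $Q_{2}$ with $Q_{1} \subset Q_{2} \subset Q_{3}$ and $f^{-1}[Q_{2}] = P_{2}$.

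The main point to check is that strictness of inclusions propagates correctly. This is automatic because lying over distinct primes forces the corresponding primes upstairs to be distinct: $\overline{Q_{2}}$ lying over $\overline{P_{2}} \neq \overline{0}$ in $S/P_{1}$ (using that $\overline{f}$ is injective) forces $Q_{1} \subset Q_{2}$, while $\overline{Q_{2}}$ and $\overline{Q_{3}}$ lying over $\overline{P_{2}} \neq \overline{P_{3}}$ forces $Q_{2} \subset Q_{3}$. The relaxation that $f$ need not be unitary poses no difficulty here, since the definition of ``lying over'' is already phrased in terms of the preimage ideal $f^{-1}[Q]$.
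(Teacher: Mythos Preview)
Your argument is correct. The paper itself does not prove this lemma: it states explicitly ``We give the following lemma without a proof,'' after noting that Picavet established the commutative version in [Pi, Propositions 5.2 and 5.7] and remarking that the extension to the present generality is not difficult. So there is no proof in the paper to compare against; your direct translation via the prime-ideal correspondence between $R$ (resp.\ $S$) and $R/Q$ (resp.\ $S/f^{-1}[Q]$) is exactly the kind of routine verification the author had in mind.

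One small remark on presentation: in the $(\Leftarrow)$ direction your justification for $Q_{1} \subset Q_{2}$ via injectivity of $\overline{f}$ is valid but slightly roundabout; the more direct observation is simply that $f^{-1}[Q_{1}] = P_{1} \neq P_{2} = f^{-1}[Q_{2}]$ forces $Q_{1} \neq Q_{2}$. Also, you may want to note (for completeness in the non-unitary setting) that when $f^{-1}[Q] = S$ the quotient $S/f^{-1}[Q]$ is the zero ring, so GD for $\overline{f}$ holds vacuously and this case causes no trouble.
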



We say that a homomorphism $f: S \rightarrow R$ is {\it torsion-free} if for all $0 \neq s \in S$ and $0 \neq r \in R$, 
one has $f(s)r \neq 0$. As a side note we mention that if $f \neq 0$ is torsion-free then it is unitary.
If $f[S] \subseteq Z(R)$ and $S$ is commutative then viewing $R$ as an $S$-algebra (in the natural way: $s \cdot r \doteqdot f(s)  r$),
it is obvious that $f$ is torsion-free iff $R$ is torsion-free over $S$.

\begin{thm} \label{SGB} \textbf{SGB.} Let $S$ be a commutative valuation ring (not necessarily an integral domain) and let $ R$ be a ring.
Let $f: S \rightarrow R$ be a
homomorphism (not necessarily unitary) such that $f[S] \subseteq Z(R)$. Then
$f$ satisfies SGB. In particular, every $S$-algebra satisfies SGB over $S$.
\end{thm}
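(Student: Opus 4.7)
The plan is to invoke Lemma \ref{SGB iff GD for all prime hom images}, reducing the claim to showing that for every $Q \in \text{Spec}(R)$, the induced map $\overline{f}: S/f^{-1}[Q] \rightarrow R/Q$ satisfies GD. The first observation I would make is that since $f[S] \subseteq Z(R)$ and $Q$ is prime, $f^{-1}[Q]$ is a prime ideal of $S$ (or equals all of $S$): if $f(s_1)f(s_2) \in Q$, then by centrality $f(s_1)\cdot R \cdot f(s_2) = f(s_1)f(s_2)\cdot R \subseteq Q$, so primeness of $Q$ forces $f(s_1) \in Q$ or $f(s_2) \in Q$. Consequently $S/f^{-1}[Q]$ is either the zero ring (in which case GD for $\overline{f}$ holds vacuously) or a homomorphic image of the commutative valuation ring $S$ modulo a prime ideal, and so a commutative valuation domain.

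In the nontrivial case I would next verify that $\overline{f}$ is injective and unitary. Injectivity is automatic since we have quotiented by the kernel of the composite $S \to R \to R/Q$. For unitarity, note that $\overline{f}(\overline{1_S})$ is a central idempotent of the prime ring $R/Q$; the only central idempotents of a prime ring are $0$ and $1$, and $\overline{f}(\overline{1_S}) = 0$ would place $1_S \in f^{-1}[Q]$, contradicting the nontrivial case. Thus $R/Q$ is genuinely an algebra over the valuation domain $S/f^{-1}[Q]$, putting us in a position to invoke Theorem \ref{R torsion-free over Ov satisfies GD} once torsion-freeness is established.

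The decisive step is to verify that $R/Q$ is torsion-free over $S/f^{-1}[Q]$. Suppose $\overline{f}(\bar{s})\bar{r} = 0$ for some $\bar{s} \in S/f^{-1}[Q]$ and $\bar{r} \in R/Q$. By centrality of $\overline{f}(\bar{s})$ one has $\overline{f}(\bar{s}) \cdot (R/Q) \cdot \bar{r} = (R/Q) \cdot \overline{f}(\bar{s})\bar{r} = 0$; primeness of $R/Q$ then yields $\overline{f}(\bar{s}) = 0$ or $\bar{r} = 0$, and injectivity of $\overline{f}$ upgrades the first alternative to $\bar{s} = 0$. This is exactly the torsion-free condition, so Theorem \ref{R torsion-free over Ov satisfies GD} delivers GD for $\overline{f}$, and Lemma \ref{SGB iff GD for all prime hom images} then yields SGB for $f$. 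The concluding ``In particular'' assertion follows by specializing to the unitary case.

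The main obstacle I anticipate lies in the non-unitary noncommutative bookkeeping: making sure $\overline{f}(\overline{1_S})$ really is $\overline{1_{R/Q}}$ outside the trivial case, and handling torsion-freeness via the ``two-sided'' primeness condition $aRb = 0 \Rightarrow a = 0$ or $b = 0$ rather than any naive zero-divisor argument. Both points are resolved by the combination of the centrality hypothesis $f[S] \subseteq Z(R)$ with primeness of $R/Q$, after which the heavy lifting has already been done by Theorem \ref{R torsion-free over Ov satisfies GD}.
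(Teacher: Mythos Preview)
Your proof is correct and follows essentially the same route as the paper: reduce via Lemma \ref{SGB iff GD for all prime hom images} to showing $\overline{f}: S/f^{-1}[Q] \to R/Q$ satisfies GD, observe that $R/Q$ is a torsion-free algebra over the valuation domain $S/f^{-1}[Q]$ (with the degenerate cases handled separately), and apply Theorem \ref{R torsion-free over Ov satisfies GD}. The paper's proof is terser, asserting torsion-freeness of $\overline{f}$ and the primeness of $f^{-1}[Q]$ without spelling out the central-idempotent and primeness arguments you supply, but the structure is identical.
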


\begin{proof} Let $Q \in \text {Spec}(R)$. It is easy to see that the induced map $\overline{f}: S/f^{-1}[Q] \rightarrow R/Q$
is torsion-free. Thus, $R/Q$ is a torsion-free algebra over the valuation domain $S/ f^{-1}[Q]$.
(Note that we may have $f^{-1}[Q]=S$, since $f$ is not necessarily unitary, or $S/ f^{-1}[Q]$ a field;
in these cases it is clear that $R/Q$ satisfies GD over $S/ f^{-1}[Q]$, in a trivial way.)
 Hence, by Theorem \ref{R torsion-free over Ov satisfies GD}, $R/Q$ satisfies GD over $S/ f^{-1}[Q]$; namely, $\overline{f}$
 satisfies GD. The result now follows from Lemma \ref{SGB iff GD for all prime hom images}.
 \end{proof}

Recall from [Sa3] that an $S-$algebra $R$ is said to satisfy GGD (generalized going down) over $S$ if, for every
chain of prime ideals $\mathcal D$ of $S$ with a final element $P_0$ and $Q_0$ a
prime ideal of $R$ lying over $P_0$, there exists a chain of prime ideals $\mathcal C$ of $R$ covering $\mathcal D$
(namely, for every $P \in \mathcal D$ there exists $Q \in \mathcal C$ lying over $P$), whose final element is $Q_{0}$.
We deduced in [Sa3, Corollary 2.8] that if $R$ satisfies GD and SGB over $S$, then $R$ satisfies GGD over $S$. The following corollary is now obvious.

\begin{cor} Let $R$ be a torsion-free $O_{v}-$algebra. Then $R$ satisfies GGD over $O_{v}$. If in addition
$(R^{\times} \cap O_{v}) \subseteq O_{v}^{\times}$ (thus $R$ satisfies LO over $O_{v}$, by Proposition \ref{almost equivalent conditions LO}),
then for every chain in $\text{Spec}(O_{v})$ there exists a chain in $\text {Spec}(R)$ covering it.

\end{cor}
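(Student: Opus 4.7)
The plan is to treat the two assertions of the corollary separately, each by assembling previously established results.

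For the first assertion, since $R$ is torsion-free over $O_v$, Theorem \ref{R torsion-free over Ov satisfies GD} supplies GD of $R$ over $O_v$, and Theorem \ref{SGB} (applied with $S = O_v$) supplies SGB of $R$ over $O_v$. Then [Sa3, Corollary 2.8], cited immediately above the corollary, concludes GGD of $R$ over $O_v$. This half is essentially a citation.

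For the second assertion, the additional hypothesis $(R^{\times} \cap O_{v}) \subseteq O_{v}^{\times}$, combined with torsion-freeness, is property (b) of Proposition \ref{almost equivalent conditions LO} and therefore delivers LO of $R$ over $O_v$. Given a chain $\mathcal{D}$ in $\text{Spec}(O_v)$, I would proceed as follows. If $\mathcal{D}$ already has a final element $P_0$, apply LO to obtain $Q_0 \in \text{Spec}(R)$ lying over $P_0$, and then apply GGD to produce a chain of prime ideals of $R$ with final element $Q_0$ that covers $\mathcal{D}$. If $\mathcal{D}$ has no maximum, set $P_0 = \bigcup_{P \in \mathcal{D}} P$; since $O_v$ is commutative, $P_0$ is a prime ideal of $O_v$, so the same argument applied to $\mathcal{D} \cup \{P_0\}$ yields a chain in $\text{Spec}(R)$ covering it, and hence covering $\mathcal{D}$.

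The only subtlety is the case in which $\mathcal{D}$ has no maximum, since GGD as stated requires a final element; the remedy of passing to $\mathcal{D} \cup \{P_0\}$ is routine. Otherwise the proof is a direct concatenation of Theorems \ref{R torsion-free over Ov satisfies GD} and \ref{SGB}, Proposition \ref{almost equivalent conditions LO}, and [Sa3, Corollary 2.8].
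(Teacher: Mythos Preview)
Your proposal is correct and matches the paper's approach, which simply declares the corollary ``obvious'' from the preceding results without giving a proof. Your explicit handling of the case where $\mathcal{D}$ has no maximum (by adjoining $P_0 = \bigcup_{P \in \mathcal{D}} P$) is a clean way to fill in the one detail the paper omits; alternatively, since $\text{Spec}(O_v)$ is itself a chain with final element $I_v$, one could apply LO at $I_v$ and GGD to all of $\text{Spec}(O_v)$ at once.
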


\section{Going up}

In this section we present some sufficient conditions for a \qv ring to satisfy GU over $O_{v}$.
Moreover, we show that any finitely generated algebra over a commutative local ring satisfies GU over it.
Finally, we obtain a connection between finitely generated algebras over valuation domains and a special kind of quasi-valuations.

In [Sa1, Section 5], in addition to the assumptions presented in Remark \ref{remark on sa1}, we
assumed that $M_{w}$ is a torsion group over $\Gamma_v$; we proved that $O_{w}$ satisfies GU over $O_{v}$.
In this section we shall generalize this fact.
The proof requires a totally different approach than the one in [Sa1].

\begin{defn} Let $R$ be an $O_{v}-$algebra and let $w$ be a \qv on $R$. $w$ is called a {\it $v-$quasi-valuation with respect to $O_{v}$} if
$w(cx)=v(c)+w(x)$ for all $c \in O_{v}$ and $x \in R$. Similarly, if $R$ is an $F-$algebra and
$w$ is a \qv on $R$ satisfying $w(\alpha x)=v(\alpha)+w(x)$ for all $\alpha \in F$ and $x \in R$, we say
that $w$ is a {\it $v-$quasi-valuation with respect to $F$}.
\end{defn}

When the base ring is understood we suppress the words "with respect to ...".

\begin{defn} Let $R$ be an $O_{v}-$algebra and let $w$ be a \qv on $R$. We say that $w$ {\it extends} $v$ on $O_{v}$ if
$w(c \cdot 1_R)=v(c)$ for all $c \in O_{v}$.\end{defn}

We note that the previous definition is a generalization of the one given in [Sa1, Section 1].

\begin{rem} \label{w(1)=0 implies w extends v} Let $R$ be an $O_{v}-$algebra and let $w$ be a $v-$\qv on $R$. If $w(1_R)=0$ then $w$ extends $v$
on $O_{v}$; in particular, $\Gamma_{v} \subseteq M_{w}$ and $R$ must be faithful over $O_{v}$ (see also Proposition
\ref{almost equivalent conditions LO}). So, $O_{v}$ embeds in $R$ and for $c \in O_{v}$, we often write $c \in R$ instead of $c \cdot 1_{R} \in R$.
\end{rem}

In [Sa1, Section 1] we defined the notion of a stable element with respect to a \qv on a commutative ring.
In order to present the next results in a clearer way, we generalize this notion in a very natural way.

\begin{defn} Let $w$ be a quasi-valuation on a
ring $R$. An element $c \in R$ is called {\it left stable}
with respect to $w$ if $$w(cr)=w(c)+w(r)$$ for every $r \in R$.
Analogously, one defines the notion {\it right stable}.

\end{defn}

Note that if $R$ is an $O_{v}-$algebra and $w$ is a $v-$quasi-valuation (with respect to $O_{v}$) satisfying $w(1_R)=0$, then every element of
$O_{v}$ is left stable with respect to $w$.

The following lemma is a generalization of Lemma 1.6 in [Sa1], with a similar proof.

\begin{lem} \label{left stable iff} Let $w$ be a quasi-valuation on a
ring $R$ such that 
$w(1)=0$. Let $x$ be a right invertible element of $R$ and let $y$ denote its right inverse. The following implications hold:

$x$ is left stable $\Rightarrow$ $w(y)=-w(x)$ $\Rightarrow$ $x$ is right stable.\end{lem}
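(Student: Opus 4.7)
The plan is to chase the two implications through the quasi-valuation axioms (B2) and (B3), using only that $xy = 1$ and $w(1) = 0$. Both directions are quite short; the real content is that inequality (B2), applied twice with a multiplicative inverse, forces equality.

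For the first implication, I would start from the assumption that $x$ is left stable and apply this property to the particular element $y$: since $xy = 1_R$, left stability gives $w(x) + w(y) = w(xy) = w(1_R) = 0$, which is exactly the statement $w(y) = -w(x)$ in $M \cup \{\infty\}$. (In particular this forces $w(x)$ to be invertible in the value monoid, with inverse $w(y)$, so the equation $w(y) = -w(x)$ makes sense.)

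For the second implication, assume $w(y) = -w(x)$, and fix an arbitrary $r \in R$. One direction of the desired equality is immediate from (B2): $w(rx) \geq w(r) + w(x)$. For the reverse, the trick is to multiply $r$ by $xy = 1$ on the right and apply (B2) again:
\[
w(r) = w(r \cdot 1_R) = w((rx)y) \geq w(rx) + w(y) = w(rx) - w(x).
\]
Adding $w(x)$ to both sides yields $w(r) + w(x) \geq w(rx)$, which together with the previous inequality gives $w(rx) = w(r) + w(x)$, i.e., $x$ is right stable.

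There is no serious obstacle here; the one subtle point worth noting in the write-up is that the addition $w(rx) - w(x)$ is legitimate precisely because $w(x)$ has an additive inverse in the value monoid, as supplied by the hypothesis $w(y) = -w(x)$. This also mirrors how the commutative analogue [Sa1, Lemma 1.6] is proven, so the argument transfers to the non-commutative setting without any change beyond respecting the order of the factors $rx$ versus $xr$.
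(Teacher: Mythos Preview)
Your proof is correct and is exactly the argument indicated by the paper, which simply cites [Sa1, Lemma~1.6] and notes that the same proof goes through in the non-commutative setting. The only minor quibble is that your plan mentions axiom (B3), but the actual argument uses only (B2) together with $w(1)=0$ and the invertibility of $w(x)$; this does not affect the correctness of the proof.
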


We note that the left/right dual of Lemma \ref{left stable iff} is also valid.

Before stating the next remark, we recall that for a right (resp. left) artinian ring $R$, every element of $R$ is either left
(resp. right) zero-divisor or right (resp. left) invertible.

\begin{rem} Let $R$ be a right (resp. left) artinian ring and let $w$ be a \qv on $R$ such that $w(x) \neq \infty$ for all nonzero $x \in R$. If $0 \neq y \in R$ is left (resp. right) stable
with respect to $w$, then $y$ is right (resp. left) invertible in $R$.

\end{rem}

\begin{proof} Let $0 \neq y \in R$ be a left stable element. We prove that $y$ is not a left zero-divisor. Write $yr=0$ for some $r \in
R$; then $$\infty=w(yr)=w(y)+w(r).$$ Thus $w(r)=\infty$ i.e.,
$r=0$. The dual case is similarly proven.

\end{proof}

\begin{lem} Let $R$ be an $O_{v}$-algebra and let
$w$ be a $v-$\qv on $R$. Let $P \in \text {Spec}(O_{v})$ and let $K$ be a right (resp. left) ideal of $O_{w}$ lying over $P$.
Let $H$ denote the isolated subgroup corresponding to $P$.
Let $k \in K$ be a right (resp. left) invertible element in $R$ and let $y$ denote its right (resp. left) inverse.
If $w(y)=-w(k)$ then $w(k)>\alpha$ for every $\alpha \in H$.
In particular, assuming that $R$ is right (resp. left) artinian, $w(1_R)=0$, and
$w(x) \neq \infty$ for all nonzero $x \in R$; if $k'  \in K$ is left (resp. right) stable, then $w(k')> \alpha $ for every
$\alpha \in H$.

\end{lem}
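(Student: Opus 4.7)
The plan is to prove the first statement by contradiction, using the $v$-quasi-valuation property to move an appropriate element of $O_v$ into $K$, and then to derive the ``In particular'' clause directly from the remark preceding the lemma together with Lemma \ref{left stable iff}. I work out the right-ideal version; the left case is handled by the obvious left-right dual.

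Suppose for contradiction that $w(k)\leq \alpha$ for some $\alpha\in H$. Since $k\in O_w$ we have $w(k)\geq 0$, and since the order on $M$ extends that of $\Gamma_v$, the case $\alpha<0$ is impossible. Hence $\alpha\geq 0$ and one may pick $a\in O_v$ with $v(a)=\alpha$; the correspondence between $P$ and $H$ (namely, $\alpha\in H$ forces $\alpha\notin v(P)$) then forces $a\notin P$, hence $a\notin K$ because $K\cap O_v=P$. Using that $w$ is a $v$-quasi-valuation together with the hypothesis $w(y)=-w(k)$,
$$w(ay)\;=\;v(a)+w(y)\;=\;\alpha+(-w(k))\;\geq\;0,$$
where the last inequality follows by adding $-w(k)$ to both sides of $w(k)\leq \alpha$ and using compatibility of the order with the monoid operation. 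Therefore $ay\in O_w$. Since $a$ lies in the centre of $R$ (as $R$ is an $O_v$-algebra) and $ky=1_R$, we have $k(ay)=a(ky)=a$. Consequently $a=k\cdot(ay)\in K\cdot O_w\subseteq K$, contradicting $a\notin K$.

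For the ``In particular'' statement, the case $k'=0$ is trivial since $w(0)=\infty$. If $k'\neq 0$, the remark preceding the lemma shows that $k'$ is right invertible under the standing hypotheses; let $y'$ denote its right inverse. Lemma \ref{left stable iff}, combined with $w(1_R)=0$ and left stability of $k'$, then gives $w(y')=-w(k')$, and applying the first part to $(k,y)=(k',y')$ yields the required $w(k')>\alpha$ for every $\alpha\in H$. The one subtlety worth flagging is arithmetic inside $M$, which need not be cancellative: it is precisely the hypothesis $w(y)=-w(k)$ that singles out $w(k)$ as an invertible element of $M$ and legitimises the passage $w(k)\leq \alpha \Rightarrow \alpha+(-w(k))\geq 0$ used above; once this is granted, the chain $a=k(ay)\in K$ is essentially automatic.
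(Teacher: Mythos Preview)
Your proof is correct and follows essentially the same route as the paper: assume $w(k)\leq\alpha_0$ for some $\alpha_0\in H$, pick $a\in O_v$ with $v(a)=\alpha_0$ (so $a\notin P$), use the $v$-quasi-valuation property together with $w(y)=-w(k)$ to get $w(ay)\geq 0$, and conclude $a=k(ay)\in KO_w=K$, a contradiction. Your treatment is slightly more explicit than the paper's in two places --- ruling out $\alpha<0$ via $w(k)\geq 0$, and justifying the monoid inequality $\alpha+(-w(k))\geq 0$ via invertibility of $w(k)$ --- but these are elaborations of steps the paper leaves implicit; the ``In particular'' clause is likewise deduced in the same way from the preceding remark and Lemma~\ref{left stable iff}.
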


\begin{proof} 
Assume to the contrary that $w(k) \leq \alpha_0 $ for some
$\alpha_0 \in H$. Let $a \in O_{v}$ with $v(a)=\alpha_0$; then $a \notin P$.
Now, since $w$ is a $v-$\qv and $w(y)=-w(k)$, we have $w(ay)=v(a)+w(y) \geq 0$; thus, $$a \cdot 1_R=a(ky)=k(ay) \in KO_{w}=K,$$ a contradiction.
The second assertion is deduced by the previous Remark, Lemma \ref{left stable iff}, and the first part of the lemma (note that if $k'=0$ then
obviously $w(k')> \alpha $ for every
$\alpha \in H$). The dual case is similarly proven.


\end{proof}



The following lemma is of utmost importance for our study.

\begin{lem} \label{K_{0}+K_{1} is lying over P1} Let $R$ be an $O_{v}$-algebra and let
$w$ be a $v-$\qv on $R$ such that $w(1_R)=0$ and $w(x) \neq \infty$ for all nonzero $x \in R$. Let $P_{0} \subseteq
P_{1} \in \text {Spec}(O_{v})$ and let $H_i$ denote the isolated subgroups corresponding to $P_i$ ($i=1,2$).
Assume that $R$ is right (resp. left) artinian and
$M_{w}$ is cancellative.
If $K_{0}$ is a right ideal (resp. left) of $O_{w}$
lying over $P_{0}$ and $K_{1}$ is any subset of $R$ such that $w(k_{1})> \alpha_1$ for every $k_{1} \in K_{1}$ and $\alpha_1 \in H_{1}$, then $(K_{0}+K_{1}) \cap O_{v} \subseteq P_{1}$.

\end{lem}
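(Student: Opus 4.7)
The plan is to argue by contradiction: suppose some $c\in O_v\setminus P_1$ satisfies $c\cdot 1_R = k_0 + k_1$ with $k_0\in K_0$ and $k_1\in K_1$. The strategy is to show that $k_0$ is a unit of $R$ and that $ck_0^{-1}$ lies in $O_w$; then centrality of $c$ gives $c\cdot 1_R = k_0\cdot(ck_0^{-1})\in K_0\cdot O_w\subseteq K_0$, whence $c\in K_0\cap O_v = P_0\subseteq P_1$, a contradiction.

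The first step pins down $w(k_0)$. Since $c\notin P_1$ we have $v(c)\in H_1$, so by hypothesis $w(k_1)>v(c)$. The $v$-\qv property combined with $w(1_R)=0$ gives $w(-k_1) = v(-1)+w(k_1) = w(k_1)$ and $w(c\cdot 1_R) = v(c)$. Writing $k_0 = c\cdot 1_R + (-k_1)$ and applying the standard identity that $w(x+y) = \min\{w(x),w(y)\}$ whenever $w(x)\neq w(y)$ yields $w(k_0) = v(c)$.

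Next I will show $k_0$ is left stable. For any $r\in R$, $k_0 r = cr - k_1 r$, with $w(cr) = v(c)+w(r)$ and $w(k_1 r)\geq w(k_1)+w(r) > v(c)+w(r)$; the strict inequality survives addition of $w(r)$ because $M_w$ is cancellative and totally ordered. The same $\min$-rule then gives $w(k_0 r) = v(c)+w(r) = w(k_0)+w(r)$. In particular, $k_0 r = 0$ forces $w(r) = \infty$, so $r = 0$: left multiplication by $k_0$ is injective on $R$, and the same holds for every power $k_0^n$. The right-artinian hypothesis applied to the descending chain $k_0^n R$ supplies $b\in R$ with $k_0^n = k_0^{n+1}b$, and injectivity of left multiplication by $k_0^n$ forces $k_0 b = 1$; applying the same injectivity of $k_0$ to the identity $k_0(bk_0 - 1) = 0$ upgrades this to $bk_0 = 1$, so $k_0^{-1}$ exists in $R$.

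Finally, Lemma \ref{left stable iff} converts left stability into $w(k_0^{-1}) = -w(k_0) = -v(c)$, and the $v$-\qv property delivers $w(ck_0^{-1}) = v(c)+w(k_0^{-1}) = 0$, so $ck_0^{-1}\in O_w$. The contradiction $c\cdot 1_R\in K_0$ follows as described; the left-ideal/left-artinian case is entirely dual, swapping the roles of left and right throughout. I expect the main obstacle to be the left-stability computation for $k_0$, since this single step is what simultaneously powers the injectivity argument (forcing $k_0$ to be invertible via right-artinianness) and the final application of Lemma \ref{left stable iff}.
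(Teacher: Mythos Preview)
Your proof is correct, but the route differs from the paper's. The paper also assumes $b\in O_v\setminus P_1$ with $k_0+k_1=b$ and deduces $w(k_0)=v(b)$, but then splits into two cases according to whether $k_0$ is right invertible in $R$ or a left zero-divisor (one of which must hold by the artinian hypothesis). In the zero-divisor case it multiplies the equation on the right by a nonzero $s$ with $k_0s=0$ and cancels $w(s)$; in the invertible case it invokes the preceding lemma to conclude $w(y)<-w(k_0)$ for a right inverse $y$, multiplies by $y$, and again cancels.

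Your key extra observation is that $k_0$ is \emph{left stable}: $w(k_0r)=v(c)+w(r)$ for all $r$, which you obtain directly from $k_0r=cr-k_1r$ and cancellativity. This single fact eliminates the case split (left stability already forces $k_0$ to be a non--left-zero-divisor, hence a unit by the artinian descending-chain argument), and then Lemma~\ref{left stable iff} gives $w(k_0^{-1})=-v(c)$ on the nose, so $ck_0^{-1}\in O_w$ and $c\in K_0$. In effect you reach a contradiction with $K_0\cap O_v=P_0$ rather than with the hypothesis $w(k_1)>v(c)$. Your argument is a bit more streamlined; the paper's has the minor advantage of making visible exactly where each hypothesis (cancellativity, the $w(x)\neq\infty$ condition) is consumed in each case.
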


\begin{proof} 
Assume to the contrary that $(K_{0}+K_{1})  \cap O_{v} \nsubseteq P_{1} $ and write 

\begin{equation}
k_{o}+k_{1}=b  \tag{I} \label{2}
\end{equation}

where $k_{o} \in K_0, \ k_{1} \in K_{1}, $ and $b
\in O_{v } \setminus P_{1}$. By assumption, $w(k_{1})>v(b)$. Therefore by [Sa1, Lemma 1.4],
$$w(k_{0})=w(b)=v(b) \in H_{1} \subseteq H_{0}.$$

Now, if $k_{0}$ is right invertible in $R$ and $y$ is its right inverse, then by the previous lemma,
$w(y) \neq -w(k_{0})=-v(b)$. 
Since $w$ is a quasi-valuation and by assumption $w(1)=0$, we get $ w(k_0)+w(y) \leq 0$; 
thus, $w(y)<-w(k_{0})$.
Multiplying equation
(\ref{2}) by $y$ from the right, we get
$$1+k_{1}y=by.$$ Now, since $w(by)=v(b)+w(y)<0$ and $w(1)=0$, we have, by [Sa1, Lemma 1.4],
$w(k_{1}y)=w(by)$. Therefore,
$$w(k_{1})+w(y) \leq w(k_{1}y)=w(by)= v(b)+w(y).$$
Now, cancel $w(y)$ from both sides and get $w(k_{1})
\leq v(b)$, a contradiction. Note that $w(y)<-w(k_{0})$, so we do not need here the assumption that $0 \in R$
is the only element whose value is infinity.

If $k_{0}$ is not right invertible in $R$ then it is a left zero-divisor.
Write $k_{0}s=0$ for some nonzero $s \in R$.
Now, multiplying equation (\ref{2}) by $s$ from the right, we get
$$k_{0}s+k_{1}s=bs.$$ Thus $k_{1}s=bs$ and $$w(k_{1})+w(s) \leq
w(k_{1}s)=w(bs)=v(b)+w(s).$$
Now, since $s$ is not zero, $w(s) \neq \infty$; i.e., $w(s) \in M_{w}$ and is thus reducible.
Finally, cancel $w(s)$ from both sides
and get $w(k_{1}) \leq v(b)$, a contradiction.
As above, the dual case is similarly proven.

\end{proof}

\begin{prop} \label{Q_{0}+P_{1}O_{w} is lying over P1} Let $R$ be an $O_{v}$-algebra and let
$w$ be a $v-$\qv on $R$ such that $w(1_R)=0$ and $w(x) \neq \infty$ for all nonzero $x \in R$.
Let $P_{0} \subseteq
P_{1} \in \text {Spec}(O_{v})$.
Assume that $R$ is right or left artinian and
$M_{w}$ is cancellative. If $I_{0}$ is an ideal of $O_{w}$
lying over $P_{0}$ then $I_{0}+P_{1}O_{w}$ is lying over $P_{1}$.

\end{prop}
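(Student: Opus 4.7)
The plan is to establish both inclusions in the equation $(I_0 + P_1 O_w) \cap O_v = P_1$. The easy direction $P_1 \subseteq (I_0 + P_1 O_w) \cap O_v$ is immediate: for any $p \in P_1$, the element $p \cdot 1_R$ lies in $P_1 O_w$, and hence in $I_0 + P_1 O_w$.

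For the nontrivial inclusion $(I_0 + P_1 O_w) \cap O_v \subseteq P_1$, the idea is to invoke Lemma \ref{K_{0}+K_{1} is lying over P1} directly, taking $K_0 = I_0$ (a two-sided, hence simultaneously left and right, ideal of $O_w$ lying over $P_0$) and $K_1 = P_1 O_w$ (a subset of $R$). All the hypotheses of the lemma concerning $R$, $w$, and $M_w$ are inherited from the proposition. What remains to verify is the quasi-valuation estimate on $K_1$: that $w(k_1) > \alpha$ for every $k_1 \in P_1 O_w$ and every $\alpha \in H_1$, where $H_1$ denotes the isolated subgroup of $\Gamma_v$ corresponding to $P_1$.

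To carry out this verification, I would first note that Remark \ref{w(1)=0 implies w extends v} gives $O_v \hookrightarrow O_w$. Given $k_1 \in P_1 O_w$, written as a finite sum $\sum p_i r_i$ with $p_i \in P_1$ and $r_i \in O_w$, the standard minimum-$v$-value trick (identical to the one used in Remark \ref{x can be written as x=ar}, exploiting that $P_1 \subseteq O_v \subseteq Z(R) \cap O_w$) lets us factor $k_1 = p r$ for some $p \in P_1$ of minimal value and some $r \in O_w$. Since $w$ is a $v$-quasi-valuation, $w(k_1) = w(pr) = v(p) + w(r) \geq v(p)$, using $w(r) \geq 0$. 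Finally, by the correspondence between prime ideals of $O_v$ and isolated subgroups of $\Gamma_v$, for $p \in P_1$ we have $v(p) \geq 0$ and $v(p) \notin H_1$, and convexity of $H_1$ forces $v(p) > \alpha$ for every $\alpha \in H_1$.

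All the substantive technical content has already been packaged into Lemma \ref{K_{0}+K_{1} is lying over P1}, so there is no real obstacle to overcome here; this proposition reads essentially as a clean application of that lemma. The only step requiring minor attention is the reduction from the general sum $\sum p_i r_i \in P_1 O_w$ to a single product $pr$, which is routine once one knows $O_v \subseteq O_w$ and that $O_v$ is central.
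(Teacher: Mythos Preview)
Your proposal is correct and follows essentially the same route as the paper: both arguments set $K_0 = I_0$ and $K_1 = P_1 O_w$ in Lemma \ref{K_{0}+K_{1} is lying over P1}, reduce an arbitrary element of $P_1 O_w$ to a single product $pr$ via Remark \ref{x can be written as x=ar}, and use the $v$-quasi-valuation identity $w(pr)=v(p)+w(r)\geq v(p)$ together with $v(p)\notin H_1$ to verify the required estimate. Your explicit mention of the easy inclusion $P_1 \subseteq (I_0 + P_1 O_w)\cap O_v$ is a small addition but not a departure.
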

\begin{proof}
By definition, for all $x \in O_{w}$, $w(x) \geq 0$. Since $w$ is a $v-$quasi-valuation, we have $w(p_{1}x)>\alpha_1$ for every $p_{1} \in P_{1}$,
$x \in O_{w}$, and $\alpha_1 \in H_{1}$ (note that every element of $P_{1}O_{w}$ can be written in this form, by Remark \ref{x can be written as x=ar}). Now, Take $K_{0}=I_{0}$ and $K_{1}=P_{1}O_{w}$ in Lemma \ref{K_{0}+K_{1} is lying over P1}.
\end{proof}

As a side note, we mention that in [Sa1, Lemma 5.3] we proved that $I_{w}$ is contained in
any maximal ideal of $O_{w}$ (we assumed that $M_{w}$ is a torsion group over
$\Gamma_{v}$, in addition to the basic assumptions presented in Remark \ref{remark on sa1}).
We deduced that every maximal ideal
of $O_{w}$ is lying over $I_{v}$. The following Proposition is a
generalization of this fact.


\begin{prop} Let $R$, $w$, and $M_w$ be as in Lemma \ref{K_{0}+K_{1} is lying over P1}.
Let $K$ be a maximal right (resp. left) ideal of $R$. Then $I_{w}
\subseteq K$; in particular, $K \cap O_{v}=I_{v}$.
\end{prop}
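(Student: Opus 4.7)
The plan is to argue by contradiction: assume there exists $i\in I_{w}$ with $i\notin K$ and derive a contradiction using Lemma~\ref{K_{0}+K_{1} is lying over P1}. The key observation is that $K$, being a maximal one-sided ideal of the quasi-valuation ring $O_{w}$, lies over a prime ideal of $O_{v}$, so the lemma is directly applicable with $K_{0}=K$ and $K_{1}=I_{w}$.

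The setup step is to verify that $P_{0}:=K\cap O_{v}$ is a prime ideal of $O_{v}$, which is required to invoke the lemma. For this I would use the standard fact that for a maximal right ideal $K$ of $O_{w}$, the annihilator of the simple right $O_{w}$-module $O_{w}/K$ is a primitive, hence prime, two-sided ideal of $O_{w}$; since $O_{v}$ is commutative and embedded centrally in $O_{w}$, the contraction of any prime of $O_{w}$ to $O_{v}$ is again prime, and a short centrality argument (every $c\in O_{v}$ satisfies $cO_{w}\subseteq K\iff c\in K$) shows that this contraction equals $K\cap O_{v}$. Thus $P_{0}$ is prime, and $P_{0}\subseteq I_{v}$ because $1\notin K$ and $O_{v}$ is local.

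Now apply Lemma~\ref{K_{0}+K_{1} is lying over P1} with $K_{0}=K$, $K_{1}=I_{w}$, $P_{0}=K\cap O_{v}$, $P_{1}=I_{v}$ and $H_{1}=\{0\}$. The hypothesis on $K_{1}$ is precisely $w(k_{1})>0$ for every $k_{1}\in I_{w}$, which is the definition of $I_{w}$. The conclusion is
\[(K+I_{w})\cap O_{v}\subseteq I_{v}.\]
By the maximality of $K$ in $O_{w}$, $iO_{w}+K=O_{w}$, so there exist $r\in O_{w}$ and $k\in K$ with $1_{R}=ir+k$. Since $I_{w}$ is a two-sided ideal of $O_{w}$ and $r\in O_{w}$, we have $ir\in I_{w}$; hence $1_{R}\in(K+I_{w})\cap O_{v}\subseteq I_{v}$, contradicting $v(1)=0$.

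The ``in particular'' statement is then formal: by Remark~\ref{w(1)=0 implies w extends v}, $w$ extends $v$ on $O_{v}$, so $I_{w}\cap O_{v}=I_{v}$; combined with $I_{w}\subseteq K$ this yields $I_{v}\subseteq K\cap O_{v}$, and the reverse inclusion follows because $K$ is proper and $O_{v}$ is local with unique maximal ideal $I_{v}$. The left-ideal case is symmetric, using the dual form of Lemma~\ref{K_{0}+K_{1} is lying over P1}. The main obstacle I anticipate is the setup step, i.e., establishing the primality of $K\cap O_{v}$ in the non-commutative setting; once that is in place, the lemma does the heavy lifting and the contradiction emerges immediately from the maximality of $K$ together with $I_{w}$ being a two-sided ideal of $O_{w}$.
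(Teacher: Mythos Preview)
Your proof is correct and follows essentially the same route as the paper: apply Lemma~\ref{K_{0}+K_{1} is lying over P1} with $K_{0}=K$, $K_{1}=I_{w}$, $P_{1}=I_{v}$, $H_{1}=\{0\}$ to conclude that $K+I_{w}$ is a proper right ideal of $O_{w}$, and then invoke maximality of $K$. You are in fact more careful than the paper, which simply writes ``take $K_{0}=K$'' without verifying that $P_{0}=K\cap O_{v}$ is prime; your argument via the primitive ideal $\mathrm{core}(K)$ (or the direct computation $a\notin K,\ ab\in K\Rightarrow b\in K$ using $aO_{w}+K=O_{w}$) correctly fills this small gap.
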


\begin{proof} Take $K_{0}=K$ and $K_{1}=I_{w}$ in Lemma \ref{K_{0}+K_{1} is lying over P1}.
Then the right (resp. left) ideal $K+I_{w}$ is lying over $I_{v}$; in particular, it is a proper right (resp. left) ideal of $O_{w}$.
By the maximality of $K$, $I_{w} \subseteq K$.

\end{proof}

We take a small pause here to prove a general basic lemma
in which the base ring considered is any commutative ring and not necessarily a valuation domain.

\begin{lem} \label{Q_0+P_1R lying over P_1 iff there exists Q1} Let $S$ be a commutative ring and let $R$ be an $S-$algebra.
Let $P_{0} \subseteq P_{1} \in \text {Spec}(S)$ and $I_{0} \vartriangleleft R$ lying over $P_{0}$.
Then $I_{0}+P_{1}R$ is lying over $P_{1}$ iff
there exists $I_{0} \subseteq Q_{1} \in
\text {Spec}(R)$ such that $Q_{1} \cap S=P_{1}$.
\end{lem}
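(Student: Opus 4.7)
The statement is an ``if and only if'', so I plan to handle the two directions separately, with the backward direction being essentially a containment chase and the forward direction requiring a Zorn-type construction of a prime.

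For the backward direction, suppose $Q_1 \in \text{Spec}(R)$ satisfies $I_0 \subseteq Q_1$ and $Q_1 \cap S = P_1$. First I would show $P_1 \subseteq (I_0 + P_1 R) \cap S$: for $p \in P_1$, the element $p \cdot 1_R$ belongs to $P_1 R \subseteq I_0 + P_1 R$, so $p \in (I_0 + P_1 R) \cap S$ by the definition of ``lying over''. For the reverse containment, note that $I_0 \subseteq Q_1$ and $P_1 \cdot 1_R \subseteq Q_1$ (since $P_1 \subseteq Q_1 \cap S$), whence $P_1 R \subseteq Q_1$, so $I_0 + P_1 R \subseteq Q_1$; intersecting with $S$ yields $(I_0 + P_1 R) \cap S \subseteq Q_1 \cap S = P_1$.

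For the forward direction, the plan is to construct the prime $Q_1$ via an $m$-system argument. Let $f: S \to R$ be the structure homomorphism and consider the set $\mathcal{T} = \{f(s) \mid s \in S \setminus P_1\}$, which is a multiplicative subset of $Z(R)$, in particular an $m$-system in $R$. The key observation is that $(I_0 + P_1 R) \cap \mathcal{T} = \emptyset$: if $f(s) \in I_0 + P_1 R$ for some $s \in S \setminus P_1$, then $s \in (I_0 + P_1 R) \cap S = P_1$ by hypothesis, a contradiction. By a standard Zorn's lemma argument there exists an ideal $Q_1$ of $R$ with $I_0 + P_1 R \subseteq Q_1$ that is maximal subject to $Q_1 \cap \mathcal{T} = \emptyset$, and any such ideal is prime. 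Finally, $P_1 \subseteq (I_0 + P_1 R) \cap S \subseteq Q_1 \cap S$, while any $s \in (Q_1 \cap S) \setminus P_1$ would place $f(s)$ in $Q_1 \cap \mathcal{T}$, contradicting the construction; hence $Q_1 \cap S = P_1$ and clearly $I_0 \subseteq Q_1$.

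There is no real obstacle here; the only subtlety is correctly handling the convention that ``$Q_1 \cap S$'' denotes $f^{-1}[Q_1]$ and that ``$P_1 R$'' means the ideal generated by $f(P_1)$, so that the two containment arguments come out cleanly. Standard facts — that an ideal maximal with respect to avoiding an $m$-system is prime, and that central multiplicative sets are $m$-systems — carry the forward direction, while the backward direction is purely formal.
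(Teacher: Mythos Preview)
Your proof is correct and follows essentially the same route as the paper: the paper dismisses the backward direction as ``clear'' (your containment chase makes this explicit), and for the forward direction it applies Zorn's Lemma to the set $\mathcal{Z}=\{J\vartriangleleft R\mid I_0+P_1R\subseteq J,\ J\cap S=P_1\}$, which is exactly your poset of ideals containing $I_0+P_1R$ and disjoint from $\mathcal{T}=f(S\setminus P_1)$. Your $m$-system phrasing just makes the primality of the maximal element more transparent, but the argument is the same.
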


\begin{proof} $(\Rightarrow)$ Define $$\mathcal
Z=\{ J \vartriangleleft R \mid I_{0}+P_{1}R \subseteq J, \
J \cap S=P_{1}\}.$$ By assumption,
$I_{0}+P_{1}R$ is an ideal of $R$ lying over $P_{1}$; thus
$\mathcal Z \neq \emptyset $. Now, $\mathcal Z$ with the partial
order of containment satisfies the conditions of Zorn's Lemma and
therefore there exists $I_{0}  \subseteq I_{0}+P_{1}R \subseteq Q_{1} \vartriangleleft R$
lying over $P_{1}$, maximal with respect to containment. It is
easily seen that $Q_{1} \in \text {Spec}(R)$.
$(\Leftarrow)$ It is clear.

 \end{proof}

We return now to our general discussion, to prove one of the main results of this section.

\begin{thm} \label{GU} \textbf{GU.} Let $R$ be an $O_{v}$-algebra and let
$w$ be a $v-$\qv on $R$ such that $w(1_R)=0$ and $w(x) \neq \infty$ for all nonzero $x \in R$. Assume that $R$ is right (or left) artinian and
$M_{w}$ is cancellative. Then $O_{w}$ satisfies GU over $O_{v}$.

\end{thm}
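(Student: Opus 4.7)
The plan is to deduce GU directly by chaining the two immediately preceding results, which between them reduce GU to a purely formal consequence. The hypotheses on $R$, $w$, and $M_w$ are exactly what was needed to invoke Proposition~\ref{Q_{0}+P_{1}O_{w} is lying over P1}, so the bulk of the real work has already been carried out; what remains is essentially bookkeeping.

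First, I would fix $P_{1} \subset P_{2}$ in $\text{Spec}(O_{v})$ together with $Q_{1} \in \text{Spec}(O_{w})$ lying over $P_{1}$. I would then apply Proposition~\ref{Q_{0}+P_{1}O_{w} is lying over P1} to $I_{0} := Q_{1}$ (taking the ``$P_{0}$'' of that proposition to be $P_{1}$ and the ``$P_{1}$'' to be $P_{2}$). The hypotheses of that proposition are precisely the standing hypotheses of the present theorem, so the proposition gives that the ideal $Q_{1} + P_{2}O_{w}$ of $O_{w}$ is lying over $P_{2}$.

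Next I would invoke Lemma~\ref{Q_0+P_1R lying over P_1 iff there exists Q1}, applied with $S = O_{v}$ and the ambient algebra taken to be $O_{w}$ (viewed as an $O_{v}$-algebra). With $I_{0} = Q_{1}$, the ``$(\Rightarrow)$'' direction of that lemma, combined with the previous paragraph, yields a prime ideal $Q_{2} \in \text{Spec}(O_{w})$ with $Q_{1} \subseteq Q_{2}$ and $Q_{2} \cap O_{v} = P_{2}$. Finally, since $Q_{1} \cap O_{v} = P_{1} \neq P_{2} = Q_{2} \cap O_{v}$, we automatically get the strict containment $Q_{1} \subset Q_{2}$, completing the verification of GU.

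I do not expect any genuine obstacle; the two preparatory results have already absorbed all the technical content (the delicate cancellation arguments in Lemma~\ref{K_{0}+K_{1} is lying over P1} in particular). The only thing to be mildly careful about is the relabeling of indices when quoting Proposition~\ref{Q_{0}+P_{1}O_{w} is lying over P1}, and noting that since $O_{w}$ is an $O_{v}$-subalgebra of $R$ (with the same unit, by $w(1_{R}) = 0$ and Remark~\ref{w(1)=0 implies w extends v}), Lemma~\ref{Q_0+P_1R lying over P_1 iff there exists Q1} does apply with $S = O_{v}$ and ambient ring $O_{w}$.
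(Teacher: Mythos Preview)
Your proposal is correct and follows essentially the same approach as the paper: apply Proposition~\ref{Q_{0}+P_{1}O_{w} is lying over P1} to show that $Q_{1}+P_{2}O_{w}$ lies over $P_{2}$, then invoke Lemma~\ref{Q_0+P_1R lying over P_1 iff there exists Q1} to extract the desired prime $Q_{2}$. The paper's proof is just a two-line version of what you wrote, with indices shifted by one.
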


\begin{proof} Let $P_{0} \subseteq P_{1} \in \text {Spec}(O_{v})$
and let $Q_{0} \in \text {Spec} (O_{w})$ lying over $P_{0}$. By Proposition
\ref{Q_{0}+P_{1}O_{w} is lying over P1}, $Q_{0}+P_{1}O_{w}$ is lying over $P_{1}$. The theorem now follows from Lemma \ref{Q_0+P_1R lying over P_1 iff there exists Q1}.

\end{proof}

Note that in Theorem \ref{GU} $R$ does not need to be torsion-free over $O_{v}$.



\begin{cor} Let $A$ be a right (or left) artinian $F$-algebra and let
$w$ be a \qv on $A$ extending $v$ on $F$, satisfying $w(x) \neq \infty$ for all nonzero $x \in A$. If
$M_{w}$ is cancellative then $O_{w}$ satisfies GU over $O_{v}$.

\end{cor}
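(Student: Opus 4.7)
The plan is to reduce this corollary directly to Theorem \ref{GU} by verifying that all its hypotheses are met in the present situation. Since $A$ is an $F$-algebra and $O_v \subseteq F$, we immediately view $A$ as an $O_v$-algebra. Also $M_w$ is cancellative, $A$ is one-sided artinian, and the non-triviality condition $w(x) \neq \infty$ for every nonzero $x \in A$ is part of the hypothesis, so three of the five hypotheses of Theorem \ref{GU} are automatic. What remains is to upgrade ``$w$ extends $v$ on $F$'' to ``$w$ is a $v$-quasi-valuation with respect to $O_v$ with $w(1_A)=0$.''

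First I would observe that $w(1_A)=w(1_F\cdot 1_A)=v(1_F)=0$ because $w$ extends $v$ on $F$. Next, the key step is to show that every $\alpha\in F$ is (left and right) stable with respect to $w$; this is where the extension-to-$F$ hypothesis does the real work. For any nonzero $\alpha\in F$, both $\alpha\cdot 1_A$ and $\alpha^{-1}\cdot 1_A$ lie in $A$ and are mutual two-sided inverses, and from $w(\alpha\cdot 1_A)+w(\alpha^{-1}\cdot 1_A)=v(\alpha)+v(\alpha^{-1})=0$ together with the quasi-valuation inequality $w(\alpha\cdot 1_A\cdot \alpha^{-1}\cdot 1_A)=w(1_A)=0$, Lemma \ref{left stable iff} (applied to $\alpha\cdot 1_A$ and its inverse $\alpha^{-1}\cdot 1_A$) forces $\alpha\cdot 1_A$ to be both left and right stable. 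Consequently, for every $c\in O_v$ and $x\in A$ one has $w(cx)=w(c\cdot 1_A)+w(x)=v(c)+w(x)$, which is exactly the $v$-quasi-valuation property with respect to $O_v$.

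With these properties in hand, $A$ and $w$ satisfy every hypothesis of Theorem \ref{GU}, and we conclude that $O_w$ satisfies GU over $O_v$. The only step that is not completely formal is the stability of elements of $F$, but this is a standard quasi-valuation argument and is already packaged in Lemma \ref{left stable iff}, so no real obstacle is expected; the corollary is essentially a repackaging of Theorem \ref{GU} in the more familiar setting of $F$-algebras.
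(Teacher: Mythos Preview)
Your proposal is correct and follows essentially the same route as the paper: verify the hypotheses of Theorem~\ref{GU} (in particular that $w$ is a $v$-quasi-valuation with $w(1_A)=0$) and then apply it. The paper invokes [Sa1, Lemma~1.6] to get stability of elements of $F$, while you use the in-paper generalization Lemma~\ref{left stable iff} and spell out the argument; the substance is identical. One minor remark: the ``quasi-valuation inequality'' you mention is not actually what drives the stability conclusion---the key input to Lemma~\ref{left stable iff} is simply $w(\alpha^{-1}\cdot 1_A)=-w(\alpha\cdot 1_A)$, which you already have directly from the assumption that $w$ extends $v$.
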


\begin{proof} By assumption, $w$ extends $v$ on $F$ and thus, by [Sa1, Lemma 1.6], $w$ is a $v-$\qv with respect to $F$. In particular,
$w$ is a $v-$\qv with respect to $O_{v}$. Clearly, $w(1_A)=0$; the corollary now follows form Theorem \ref{GU}.


\end{proof}

We recall that in [Sa1, Theorem 5.16] we proved that $O_{w}$ satisfies GU over $O_{v}$
(under the assumptions presented in Remark \ref{remark on sa1} and
the assumption that $M_{w}$ is a torsion group over $\Gamma_v$). It is now easy to deduce
this theorem from [Sa1, Lemma 2.8] (which states that $w(x) \neq \infty$ for all nonzero $x \in E$) and the previous corollary.

The quasi-valuation ring considered in Theorem \ref{GU} is, by definition, an $O_{v}-$subalgebra of $R$
(note that if $w$ is the filter \qv induced by $(R,v)$, then $O_{w}=R$).
We now expand Theorem \ref{GU}.
Namely, we prove the GU property in case the \qv on $R$
can be extended in a natural way (see Lemma \ref{[Sa1, Lemma 9.31]}) and the associated quasi-valuation ring is not necessarily an $O_{v}-$subalgebra of $R$.





Using Remark \ref{w(1)=0 implies w extends v} and Lemma \ref{[Sa1, Lemma 9.31]} (with the mentioned construction), we deduce,

\begin{lem} \label{natural extension} Let $R$ be a torsion-free $O_{v}-$algebra and let $w$ be a $v-$\qv on $R$ (with respect to $O_{v}$). If $w(1_R)=0$ then
there exists a $v-$\qv $W$ (with respect to $F$), on $R
\otimes_{O_{v}}F$, extending $w$ on $R$ and extending $v$ on $F$, with $M_W=M_w$.
\end{lem}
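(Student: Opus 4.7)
The plan is to apply Lemma \ref{[Sa1, Lemma 9.31]} with the multiplicative set $S=O_{v}\setminus\{0\}$, so that $O_{v}S^{-1}=F$ and $R\otimes_{O_{v}}O_{v}S^{-1}=R\otimes_{O_{v}}F$. Before invoking it I need to know that the target monoid of $w$ contains $\Gamma_{v}$; this is handed to us by Remark \ref{w(1)=0 implies w extends v}: since $w$ is a $v$-quasi-valuation with $w(1_{R})=0$, $w$ extends $v$ on $O_{v}$, so $v(c)=w(c\cdot 1_{R})\in M_{w}$ for every nonzero $c\in O_{v}$, placing $\Gamma_{v}$ inside $M_{w}$ (with the additive inverses coming from the ambient totally ordered abelian monoid). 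The cited lemma then produces a quasi-valuation $W$ on $R\otimes_{O_{v}}F$ extending $w$, governed by the explicit formula recalled just after its statement: $W(r\otimes\tfrac{1}{b})=w(r)-v(b)$.

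Next I would verify the remaining assertions one by one. For the extension of $v$ on $F$: given $\alpha=c/d\in F$ with $c,d\in O_{v}$ and $d\neq 0$, the natural identification gives $\alpha\cdot 1_{R\otimes F}=(c\cdot 1_{R})\otimes\tfrac{1}{d}$, whence
$$W(\alpha\cdot 1)=w(c\cdot 1_{R})-v(d)=v(c)-v(d)=v(\alpha),$$
the middle equality being exactly the content of Remark \ref{w(1)=0 implies w extends v}.

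For $W$ being a $v$-quasi-valuation with respect to $F$, write a typical nonzero element of $R\otimes_{O_{v}}F$ as $x=r\otimes\tfrac{1}{b}$ and a scalar as $\alpha=c/d$. Then $\alpha x=(cr)\otimes\tfrac{1}{bd}$, and the formula for $W$, combined with $w(cr)=v(c)+w(r)$ (which expresses that $w$ is a $v$-quasi-valuation on $R$), yields
$$W(\alpha x)=w(cr)-v(bd)=v(c)+w(r)-v(b)-v(d)=v(\alpha)+W(x).$$
Finally, for $M_{W}=M_{w}$: the inclusion $M_{w}\subseteq M_{W}$ is automatic because $W$ extends $w$; the reverse inclusion follows from the explicit formula, since every nonzero value $W(r\otimes\tfrac{1}{b})=w(r)-v(b)$ is the sum of $w(r)\in M_{w}$ and $-v(b)\in\Gamma_{v}\subseteq M_{w}$.

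The hard part is essentially hidden inside Lemma \ref{[Sa1, Lemma 9.31]}, which already handles the verification of axioms (B1)-(B3) for $W$ and the well-definedness of the formula on $R\otimes_{O_{v}}F$. The new content needed here is therefore just the observation that taking $S=O_{v}\setminus\{0\}$ makes $O_{v}S^{-1}=F$, and the three short computations above which show that $W$ inherits the extra structure claimed in the statement. The only conceptual point to keep in mind is that $\Gamma_{v}$ is a group sitting inside $M_{w}$, so the expression $w(r)-v(b)$ legitimately lies in $M_{w}$; this is what forces the equality $M_{W}=M_{w}$ rather than a mere inclusion.
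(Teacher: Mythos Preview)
Your proposal is correct and follows exactly the same approach as the paper: the paper's proof consists of a single sentence citing Remark \ref{w(1)=0 implies w extends v} and Lemma \ref{[Sa1, Lemma 9.31]} (with $S=O_{v}\setminus\{0\}$), and you have simply unpacked those citations and written out the routine verifications that $W$ extends $v$ on $F$, that $W$ is a $v$-quasi-valuation with respect to $F$, and that $M_{W}=M_{w}$. There is no difference in strategy.
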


Recall that $W$ is defined by $W( r\otimes \frac{1}{b})=w(r)-v(b)$ for all $r \otimes
\frac{1}{b} \in R \otimes_{O_{v}}F$.

\begin{defn} Notation as in Lemma \ref{natural extension}. $W$ is called the {\it natural extension} of $w$. \end{defn}

We present here two examples demonstrating that the \qv ring associated to the natural extension
need not be an $O_{v}-$subalgebra of $R$:

\begin{ex}  Let $I$ be a proper nonzero ideal of $O_{v}$ and let \begin{equation*} R_1=
 \left(
\begin{array}{cc}
 O_{v} &  I  \\
 I^{-1}  & O_{v}  \\

\end{array} \right), \text{where  }  I^{-1}=\{ x \in F \mid xI \subseteq O_{v}\}.
\end{equation*}

Define \begin{equation*} w_1
 \left(
\begin{array}{cc}
 a &  b  \\
 c  & d  \\

\end{array} \right)= \min \{ v(a), v(b), v(c), v(d)\} \text{ for all } \left(
\begin{array}{cc}
 a &  b  \\
 c  & d  \\

\end{array} \right) \in R_1.
\end{equation*}

Note that $w_1$ is a $v-$\qv satisfying $w(1)=0$ and $w(x) \neq \infty$ for all nonzero $x \in R_1$. Let $W_1$ denote the natural extension of $w_1$. Then $$O_{W_1} \nsubseteq R_1 \text{ and } O_{W_1} \nsupseteq R_1.$$

Similarly, let

\begin{equation*} R_2= \left(
\begin{array}{cc}
 O_{v} & \{ 0 \} \\
\{ 0 \} & I  \\

\end{array} \right),\text{ and define } w_2 \text{ as above.}
\end{equation*}

Then  $R_2 \subset O_{W_2}$, where $W_2$ is the natural extension of $w_2$.



\end{ex}

\begin{cor} \label{If exist subalgebra such that and contains then GU} Let $R$ be a torsion-free $O_{v}$-algebra.
Let $w$ be a $v-$\qv on $R$ such that $w(1_R)=0$, $w(x) \neq \infty$ for all nonzero $x \in R$
and $M_w$ is cancellative.
Let $W$ denote the natural extension of $w$. If there exists an
$O_{v}-$subalgebra $R'$ of $R \otimes_{O_{v}} F$ such that $R'$ is left or right artinian and $O_{W}$ embeds in $R'$, then $O_{W}$
satisfies GU over $O_{v}$.


\end{cor}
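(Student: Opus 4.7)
The plan is to reduce to Theorem \ref{GU} by restricting the natural extension $W$ from $R \otimes_{O_v} F$ down to the artinian $O_v$-subalgebra $R'$. Set $W' := W|_{R'}$ and apply Theorem \ref{GU} to the pair $(R', W')$; the conclusion will follow once we identify the associated quasi-valuation ring with $O_W$.

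First, I would check that $W'$ satisfies each of the four hypotheses of Theorem \ref{GU}. By Lemma \ref{natural extension}, $W$ is a $v$-quasi-valuation with respect to $F$ on $R \otimes_{O_v} F$, so in particular with respect to $O_v$; hence its restriction to the $O_v$-subalgebra $R'$ is again a $v$-quasi-valuation with respect to $O_v$. Next, $W'(1_{R'}) = W(1) = w(1_R) = 0$. For the value-infinity condition, take a nonzero $x \in R'$ and, using [Sa1, Remark 9.29] (or the construction recalled before Theorem \ref{Sa1, Theorem 9.32}), write $x = r \otimes \tfrac{1}{b}$; since $R$ is torsion-free over $O_v$ we have $r \neq 0$, so $w(r) \neq \infty$ by hypothesis, and thus $W'(x) = w(r) - v(b) \neq \infty$. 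Finally, $M_{W'}$ is a submonoid of $M_W = M_w$ (by Lemma \ref{natural extension}), so cancellativity of $M_w$ forces cancellativity of $M_{W'}$.

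Second, since $R'$ is left or right artinian by hypothesis, Theorem \ref{GU} applies to $(R', W')$ and gives that $O_{W'}$ satisfies GU over $O_v$.

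Third, the embedding hypothesis $O_W \hookrightarrow R'$ (which we read as $O_W \subseteq R'$ inside $R \otimes_{O_v} F$) lets us identify
\[
O_{W'} = \{ x \in R' \mid W(x) \geq 0 \} = O_W \cap R' = O_W,
\]
so the GU property of $O_{W'}$ over $O_v$ is precisely the GU property of $O_W$ over $O_v$. No serious obstacle is expected; the only point requiring care is the verification that $W'$ still takes the value $\infty$ only at $0$, which is where the torsion-freeness of $R$ over $O_v$ enters.
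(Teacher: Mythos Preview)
Your proposal is correct and follows exactly the paper's approach: restrict $W$ to $R'$ and invoke Theorem \ref{GU} for the pair $(R', W|_{R'})$. The paper's proof is a one-liner that leaves the verifications you spelled out (including the identification $O_{W|_{R'}} = O_W \cap R' = O_W$) implicit, so your version is simply a more detailed rendering of the same argument.
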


\begin{proof} Consider the $v-$quasi-valuation $W|_{R'}$ and note that by Lemma \ref{natural extension}, the pair $(R',W|_{R'})$ satisfies the conditions of Theorem \ref{GU}.

\end{proof}

For a ring $T$, we denote by $\text{Max}(T)$ the set of all maximal ideals of $T$.
\begin{rem} \label{remark regarding INC and GU} Let $S$ be a commutative ring and let $R$ be an $S-$algebra. If $R$ satisfies GU over $S$,
then $Q \in \text{Max}(R) \Rightarrow Q \cap S \in \text{Max}(S)$. If $R$ satisfies INC over $S$,
then $Q \notin \text{Max}(R) \Rightarrow Q \cap S \notin \text{Max}(S)$.

\end{rem}

\begin{cor} \label{Max O_W is finite} Let $R$ be a torsion-free $O_{v}-$algebra such that $[R \otimes_{O_{v}}F:F] < \infty $.
Assume that there exists a $v-$\qv $w$ on $R$ such that $w(1_R)=0$, $w(x) \neq \infty$ for all nonzero $x \in R$, and $M_w$ is cancellative. Then $\text{Max}(O_{W})=\mathcal{Q}_{I_{v}}$,
where $W$ denotes the natural extension of $w$. In particular, $\text{Max}(O_{W})$ is finite.

\end{cor}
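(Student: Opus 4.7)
The plan is to combine the GU and INC properties, both applied to $O_W$ (rather than $R$), and then use the standard observation that when a ring $T$ over a one-dimensional-flavored base satisfies both GU and INC, its maximal ideals are exactly those lying over the maximal ideal(s) of the base.

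First I would verify that $O_W$ itself fits the hypotheses of Theorem \ref{INC}. Torsion-freeness of $O_W$ over $O_v$ is immediate, since $O_W$ is a subring of $R\otimes_{O_v}F$, which is an $F$-vector space and therefore torsion-free over $O_v$. To see that $[O_W\otimes_{O_v}F:F]<\infty$, recall that every element of $O_W\otimes_{O_v}F$ has the form $x\otimes(1/b)$ with $x\in O_W$ and $b\in O_v\setminus\{0\}$; the natural map $O_W\otimes_{O_v}F\to R\otimes_{O_v}F$ sending $x\otimes(1/b)\mapsto x/b$ is $F$-linear, surjective onto the $F$-span of $O_W$ inside $R\otimes_{O_v}F$, and injective by torsion-freeness of $O_W$. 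Hence $[O_W\otimes_{O_v}F:F]\leq[R\otimes_{O_v}F:F]<\infty$. Consequently Theorem \ref{INC} yields INC for $O_W$ over $O_v$, and Lemma \ref{INC for primes over Iv} (applied to $O_W$ in place of $R$) shows that every prime ideal of $O_W$ lying over $I_v$ is in fact maximal.

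Next I would obtain GU for $O_W$ by invoking Corollary \ref{If exist subalgebra such that and contains then GU} with the choice $R'=R\otimes_{O_v}F$. This $R'$ is an $O_v$-subalgebra of $R\otimes_{O_v}F$, is artinian (being a finite-dimensional $F$-algebra), and obviously contains $O_W$; the remaining hypotheses on $R$ and $w$ are exactly those given in the statement. Hence $O_W$ satisfies GU over $O_v$.

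With both properties in hand the equality is straightforward: given $M\in\text{Max}(O_W)$, the GU half of Remark \ref{remark regarding INC and GU} forces $M\cap O_v$ to be maximal in $O_v$, i.e.\ equal to $I_v$, so $M\in\mathcal{Q}_{I_v}$; conversely any $Q\in\mathcal{Q}_{I_v}$ is maximal by the preceding paragraph. Finiteness of $\text{Max}(O_W)$ then follows from Lemma \ref{set of prime ideals lying over I_v has leq n elements} applied to $O_W$, giving the bound $|\text{Max}(O_W)|=|\mathcal{Q}_{I_v}|\leq[O_W\otimes_{O_v}F:F]\leq[R\otimes_{O_v}F:F]<\infty$. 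The only real subtlety — not so much an obstacle as the key verification — is the passage of the finite-dimension hypothesis from $R$ to $O_W$, which rests on the torsion-freeness of $O_W$ to produce the required injection of the tensor products.
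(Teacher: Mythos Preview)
Your proof is correct and follows essentially the same route as the paper: GU for $O_W$ via Corollary \ref{If exist subalgebra such that and contains then GU} with $R'=R\otimes_{O_v}F$, INC for $O_W$ via Theorem \ref{INC}, then Remark \ref{remark regarding INC and GU} and Lemma \ref{set of prime ideals lying over I_v has leq n elements}. The only difference is that the paper dismisses the verification that $O_W$ meets the hypotheses of Theorem \ref{INC} with ``it is clear,'' whereas you spell out the torsion-freeness and the finite-dimension bound $[O_W\otimes_{O_v}F:F]\leq[R\otimes_{O_v}F:F]$ explicitly.
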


\begin{proof} By assumption, $R \otimes_{O_{v}}F$ is a finite dimensional $F-$algebra and is thus artinian. Hence, by Corollary \ref{If exist subalgebra such that and contains then GU},
$O_{W}$ satisfies GU over $O_{v}$. It is clear that $O_{W}$ satisfies the assumptions of Theorem \ref{INC}, and thus $O_{W}$ satisfies INC over $O_{v}$.
Therefore, by Remark \ref{remark regarding INC and GU}, $\text{Max}(O_{W})=\mathcal{Q}_{I_{v}}$.
Finally, by Lemma \ref{set of prime ideals lying over
I_v has leq n elements}, $| \mathcal{Q}_{I_{v}} | \leq [R \otimes_{O_{v}}F:F].$

\end{proof}

\begin{cor} \label{maximal chain covers specO_v} Notation and assumptions as in Corollary \ref{Max O_W is finite} and let $\mathcal C$ be a maximal chain (with respect to containment) in $\text{Spec}(O_{W})$.
Then the map $Q \rightarrow Q \cap O_{v}$ is a bijective order preserving correspondence from $\mathcal C$ to $\text{Spec}(O_{v})$.
In other words, every maximal chain in $\text{Spec}(O_{W})$ is lying over $\text{Spec}(O_{v})$ in a one-to-one correspondence.
\end{cor}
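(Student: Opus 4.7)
The plan is to verify the five transfer properties LO, GD, GU, INC, and SGB for the pair $(O_v, O_W)$, and then combine them through a maximality-of-chain argument to force the bijective correspondence. First, I observe that $O_W$ is torsion-free over $O_v$, being a subring of the finite-dimensional $F$-algebra $R \otimes_{O_v} F$, so $[O_W \otimes_{O_v} F : F] \leq [R \otimes_{O_v} F : F] < \infty$. Since the natural extension $W$ of $w$ extends $v$ on $F$, one has $O_W \cap F = O_v$, so condition (a) of Proposition \ref{almost equivalent conditions LO} holds and LO follows. Theorem \ref{R torsion-free over Ov satisfies GD} gives GD, Theorem \ref{INC} gives INC, and Theorem \ref{SGB} gives SGB, while GU was already established in the proof of Corollary \ref{Max O_W is finite}.

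Next I would define $\phi : \mathcal{C} \to \text{Spec}(O_v)$ by $\phi(Q) = Q \cap O_v$. The map is clearly order-preserving, and both injectivity and order-reflection follow from INC together with the chain property of $\mathcal{C}$: if $\phi(Q_1) \subseteq \phi(Q_2)$ with $Q_2 \subsetneq Q_1$, INC would force $\phi(Q_2) \subsetneq \phi(Q_1)$, a contradiction. For surjectivity, I would argue by contradiction: assume some $P \in \text{Spec}(O_v) \setminus \phi(\mathcal{C})$ exists. Since $\text{Spec}(O_v)$ is totally ordered, $\mathcal{C}$ partitions into $L = \{Q \in \mathcal{C} : \phi(Q) \subsetneq P\}$ and $U = \{Q \in \mathcal{C} : P \subsetneq \phi(Q)\}$. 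The strategy is to construct a prime $Q^* \in \text{Spec}(O_W)$ lying over $P$ that is comparable with every element of $\mathcal{C}$; then $\mathcal{C} \cup \{Q^*\}$ would be a strictly larger chain, contradicting maximality. In the generic case where $L$ has a maximum $Q_L$ and $U$ a minimum $Q_U$, SGB applied to $Q_L \subsetneq Q_U$ over $\phi(Q_L) \subsetneq P \subsetneq \phi(Q_U)$ produces the desired $Q^*$; when $L$ is empty one invokes LO together with GD pushed down past $U$, and when $U$ is empty one uses GU above $L$.

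The principal obstacle I anticipate is the case where $L$ lacks a maximum or $U$ lacks a minimum. I plan to address this by passing to a limit prime: set $Q_L^{\sharp} = \bigcup_{Q \in L} Q$ and $Q_U^{\sharp} = \bigcap_{Q \in U} Q$; each is a prime ideal of $O_W$ (the intersection case is immediate, while the union case, although standard for commutative rings, deserves a careful check in the possibly noncommutative $O_W$) and is comparable with every element of $\mathcal{C}$. By maximality of $\mathcal{C}$, both $Q_L^{\sharp}$ and $Q_U^{\sharp}$ must then belong to $\mathcal{C}$, so $L$ and $U$ acquire extrema automatically. If $\phi(Q_L^{\sharp}) = P$ or $\phi(Q_U^{\sharp}) = P$, we are done directly; otherwise we are back in the generic situation, and SGB supplies the $Q^*$ that violates the maximality of $\mathcal{C}$. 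This contradiction yields surjectivity, completing the bijective order correspondence.
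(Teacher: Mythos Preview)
Your verification of INC, GU, GD, and SGB for $O_W$ over $O_v$ is identical to the paper's (you additionally check LO, which the paper does not need here). The divergence is entirely in the second half: the paper simply cites [Sa3, Corollary~1.15], an external result asserting that GD, GU, SGB, and INC together force every maximal chain in $\text{Spec}(O_W)$ to biject onto $\text{Spec}(O_v)$, whereas you reconstruct that argument by hand via the $L/U$ partition and the limit primes $Q_L^{\sharp}=\bigcup_{Q\in L}Q$ and $Q_U^{\sharp}=\bigcap_{Q\in U}Q$. Your reconstruction is essentially the content of the cited result and is sound; the one point you correctly flag as delicate---primality of $\bigcup_{Q\in L} Q$ in the possibly noncommutative $O_W$---is the only non-routine step (the intersection $Q_U^{\sharp}$ is always prime, but unions of chains of primes require more care outside the commutative setting). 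The trade-off is clear: your route is self-contained and exposes the mechanism, while the paper's is a one-line appeal to prior work.
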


\begin{proof} By Theorem \ref{INC}, $O_{W}$ satisfies INC over $O_{v}$. By Corollary \ref{If exist subalgebra such that and contains then GU},
$O_{W}$ satisfies GU over $O_{v}$. By Theorem \ref{R torsion-free over Ov satisfies GD}, $O_{W}$ satisfies GD over $O_{v}$, and by Theorem \ref{SGB} $O_{W}$ satisfies SGB over $O_{v}$.
The result now follows from [Sa3, Corollary 1.15].

\end{proof}

We note that if we do not assume in Corollary \ref{maximal chain covers specO_v} that $[R \otimes_{O_{v}}F:F] < \infty $ but instead assume that
$R \otimes_{O_{v}}F$ is left or right artinian, then the map above is not necessarily
injective (since we do not necessarily have INC), but it is surjective, as proven in [Sa3, Theorem 1.14] (since we have GU, GD and SGB).
In other words, in this case every maximal chain in $\text{Spec}(O_{W})$ is a cover of $\text{Spec}(O_{v})$.







We shall now study the GU property from a different point of view, without considering a valuation nor a quasi-valuation.
We shall assume a finiteness property.


\begin{rem} \label{Nakayama} Let $S $ be a commutative local ring, with maximal ideal $N$.
Let $R$ be an $S$-algebra, finitely generated as an $S$-module. Then,
by [Re, Theorem 6.15] (which is an application of Nakayama's Lemma), $NR \subseteq J(R)$,
where $J(R)$ is the Jacobson radical of $R$; clearly $J(R) \subseteq K$ for any maximal
ideal $K$ of $R$.\end{rem}

\begin{lem} \label{R f.g implies R* cap S contained in S*} Let $S $ be a commutative local ring and let $R$ be an $S$-algebra,
finitely generated as an $S$-module. Then $(R^{\times} \cap S) \subseteq S^{\times}$.
\end{lem}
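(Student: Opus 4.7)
The plan is to prove the contrapositive: if $s \in S$ is not a unit in $S$, then $s \cdot 1_R$ is not a unit in $R$. Since $S$ is local, every non-unit lies in the unique maximal ideal $N$, so it suffices to show that no element of $N \cdot 1_R$ is a unit of $R$.

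First I would invoke Remark \ref{Nakayama}, which says $NR \subseteq J(R)$ precisely because $R$ is finitely generated as an $S$-module. This is the only place where the finite generation hypothesis enters, and it is really just Nakayama's lemma dressed up for modules over a local ring. Given $s \in N$, we have $s \cdot 1_R \in NR \subseteq J(R)$.

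The concluding step is the standard fact that no element of the Jacobson radical of a ring can be a unit: if $s \cdot 1_R \in J(R)$ were a unit, then $1_R = (s \cdot 1_R)(s \cdot 1_R)^{-1} \in J(R)$, contradicting $J(R) \neq R$. Hence $s \cdot 1_R \notin R^{\times}$, i.e.\ $s \notin R^{\times} \cap S$, completing the contrapositive. I do not foresee any real obstacle here; the statement is a direct corollary of Remark \ref{Nakayama} combined with the elementary property of the Jacobson radical, and the whole argument should occupy only a few lines.
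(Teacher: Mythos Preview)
Your proposal is correct and follows essentially the same approach as the paper: both arguments use Remark \ref{Nakayama} to conclude $NR \subseteq J(R)$, then observe that an element of $N$ which is a unit in $R$ would force $J(R)=R$ (equivalently, that $J(R)$ contains no units). The paper phrases this as a contradiction (``$NR=R$, contradicting Remark \ref{Nakayama}'') while you phrase it as a contrapositive, but the content is identical.
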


\begin{proof} Let $N$ denote the maximal ideal of $S$ and assume to the contrary that there exists
$a \in (R^{\times} \cap S) \setminus S^{\times}$.
Thus $a \in N$ and $NR=R$, contradicting Remark \ref{Nakayama}.

\end{proof}

\begin{prop} Let $S $ be a commutative local ring and let $R$ be an $S$-algebra. 
If $R$ is finitely
generated as an $S-$module then $R$ satisfies GU over $S$.

\end{prop}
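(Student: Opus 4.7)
The plan is to reduce to the case where $P_2$ is itself the maximal ideal of $S$ by central localization, and then use the Jacobson radical containment $NR \subseteq J(R)$ supplied by Remark \ref{Nakayama} to find a maximal ideal of $R$ lying above $P_2$.

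First I would pass from $S$ to $S_{P_2}$ via localization at the central multiplicative set $T = S \setminus P_2$. Since $S \subseteq Z(R)$, the Ore conditions are trivial and $T^{-1}R$ is a well-defined $S_{P_2}$-algebra, finitely generated as an $S_{P_2}$-module (tensor products commute with localization). The prime $Q_1$ satisfies $Q_1 \cap T = \emptyset$ because $Q_1 \cap S = P_1 \subseteq P_2$, so $T^{-1}Q_1$ is a prime of $T^{-1}R$ lying over $T^{-1}P_1$. If we can produce a prime $Q_2^{\ast}$ of $T^{-1}R$ properly containing $T^{-1}Q_1$ and lying over the maximal ideal of $S_{P_2}$, then its contraction to $R$ is the required $Q_2$. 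Thus it suffices to treat the case where $P_2 = N$ is the maximal ideal of the local ring $S$.

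In this reduced setting, Remark \ref{Nakayama} gives $NR \subseteq J(R)$. The key step is to observe that $Q_1 + NR$ is a proper ideal of $R$: otherwise there would exist $q \in Q_1$ and $x \in NR \subseteq J(R)$ with $q + x = 1$, forcing $q = 1-x$ to be a unit of $R$ (by the standard characterization of the Jacobson radical), which contradicts $Q_1$ being a proper prime. Let $Q_2$ be any maximal ideal of $R$ containing $Q_1 + NR$ (such $Q_2$ exists by Zorn). Then $Q_2$ is prime (any maximal two-sided ideal is prime), and $NR \subseteq Q_2$ gives $N \subseteq Q_2 \cap S$; on the other hand $Q_2 \cap S$ is a prime of $S$ (preimage of a prime under the central map $S \to R$), hence equals $N$ by maximality of $N$ and properness of $Q_2$. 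Finally, $Q_1 \subseteq Q_2$ is a strict containment because $Q_1 \cap S = P_1 \subsetneq N = Q_2 \cap S$.

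The only delicate point I anticipate is the treatment of the Jacobson radical in the noncommutative setting: one must invoke the two-sided characterization ($x \in J(R)$ implies $1-x$ is a unit) rather than the commutative determinantal trick, and one must verify that central localization behaves well enough for the prime-ideal correspondence to carry out the reduction. Both are standard, so the argument goes through without further obstacles.
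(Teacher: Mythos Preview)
Your proposal is correct and follows essentially the same route as the paper: reduce by central localization at $P_2$ to the case where $P_2=N$ is the maximal ideal of $S$, then invoke Remark \ref{Nakayama} ($NR\subseteq J(R)$) to conclude that any maximal ideal of $R$ containing $Q_1$ lies over $N$. The paper phrases the second step as ``every maximal ideal of $R$ lies over $N$'' and leaves the details implicit, whereas you spell out explicitly why $Q_1+NR$ is proper; but the key ingredients and the overall structure are the same.
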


\begin{proof} First note that for any $P \in \text {Spec}(S)$, $R_{P}$ is an
$S_{P}$-algebra, finitely generated as an $S_{P}$-module. Thus, it
is enough to show that for any maximal ideal $K$ of $R$, $K$ is
lying over $N$, where $N$ is the maximal ideal of $S$. By Remark \ref{Nakayama}, the proposition is proved.

\end{proof}






In particular, we deduce the following:

\begin{cor} \label{R f.g implies GU} Let $S $ be a commutative valuation ring and let $R$ be an $S$-algebra, finitely generated as an $S$-module. Then $R$ satisfies GU
over $S$.
\end{cor}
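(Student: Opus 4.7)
The plan is to reduce this corollary immediately to the preceding proposition, which assumes only that the base ring is commutative and local (and that $R$ is finitely generated as an $S$-module). The single extra ingredient is the observation that any commutative valuation ring (in the sense used in the paper, namely a commutative ring whose ideals are totally ordered by containment) is automatically local. Indeed, the set of maximal ideals is an antichain in the inclusion poset, but total ordering forces an antichain to have at most one element, so $S$ has a unique maximal ideal.

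Once locality is noted, the hypotheses of the previous proposition are met verbatim: $S$ is commutative local, and $R$ is an $S$-algebra finitely generated as an $S$-module. Invoking that proposition gives GU over $S$, which is the conclusion.

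For completeness I would also sketch how to see GU directly, tracking the logic of the preceding proposition in this special case. Given $P_{1} \subset P_{2}$ in $\text{Spec}(S)$ and $Q_{1} \in \text{Spec}(R)$ over $P_{1}$, localize at $P_{2}$: the ring $R_{P_{2}}$ is a finitely generated module over the commutative local ring $S_{P_{2}}$. The extension $Q_{1}R_{P_{2}}$ is proper since $Q_{1} \cap (S \setminus P_{2}) \subseteq P_{1} \cap (S \setminus P_{2}) = \emptyset$. Choose by Zorn a maximal ideal $K$ of $R_{P_{2}}$ containing $Q_{1}R_{P_{2}}$. Remark \ref{Nakayama} then gives $P_{2}S_{P_{2}} \cdot R_{P_{2}} \subseteq J(R_{P_{2}}) \subseteq K$, so $K \cap S_{P_{2}} = P_{2}S_{P_{2}}$; contracting $K$ back to $R$ yields the desired $Q_{2} \supset Q_{1}$ lying over $P_{2}$.

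There is no genuine obstacle in this corollary beyond recognizing that the paper's notion of commutative valuation ring is local; everything else has already been done in the proposition and in the Nakayama remark preceding it.
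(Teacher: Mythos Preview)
Your proposal is correct and matches the paper's approach exactly: the corollary is stated in the paper as an immediate consequence (``In particular, we deduce the following'') of the preceding proposition on commutative local rings, and the only observation needed is that a commutative valuation ring is local. Your additional direct sketch is fine but unnecessary, since it simply reproduces the proof of that proposition.
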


\begin{cor} Let $R$ be a torsion-free $O_{v}-$algebra which is finitely generated as an $O_{v}-$module;
then $\text{Max}(R)=\mathcal{Q}_{I_{v}}$.
In particular, $\text{Max}(R)$ is finite.

\end{cor}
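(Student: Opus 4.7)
The plan is to combine three ingredients already established in the paper: GU (from the finitely generated hypothesis), INC (from torsion-freeness and finite dimension after tensoring), and the finiteness bound on $\mathcal{Q}_{I_v}$.

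First I would establish the inclusion $\text{Max}(R) \subseteq \mathcal{Q}_{I_v}$. Since $R$ is finitely generated as an $O_v$-module, Corollary \ref{R f.g implies GU} gives that $R$ satisfies GU over $O_v$. By Remark \ref{remark regarding INC and GU}, any $Q \in \text{Max}(R)$ must contract to a maximal ideal of $O_v$; but $O_v$ is local with unique maximal ideal $I_v$, so $Q \cap O_v = I_v$, i.e., $Q \in \mathcal{Q}_{I_v}$.

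Next I would establish the reverse inclusion $\mathcal{Q}_{I_v} \subseteq \text{Max}(R)$. Since $R$ is torsion-free over $O_v$ and finitely generated as an $O_v$-module, $R \otimes_{O_v} F$ is finite dimensional over $F$ (a generating set for $R$ over $O_v$ remains a spanning set for $R \otimes_{O_v} F$ over $F$). Hence the hypotheses of Lemma \ref{INC for primes over Iv} are satisfied, and every prime ideal of $R$ lying over $I_v$ is maximal. This yields $\mathcal{Q}_{I_v} \subseteq \text{Max}(R)$, completing the equality $\text{Max}(R) = \mathcal{Q}_{I_v}$.

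Finally, the finiteness of $\text{Max}(R)$ follows immediately: by Lemma \ref{set of prime ideals lying over I_v has leq n elements}, $|\mathcal{Q}_{I_v}| \leq [R \otimes_{O_v} F : F] < \infty$. No step here is a real obstacle; the whole argument is a direct assembly of previously proved facts, and the only thing worth double-checking is that the finite generation hypothesis indeed forces $[R \otimes_{O_v} F : F] < \infty$, which is immediate by base change.
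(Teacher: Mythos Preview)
Your proof is correct and follows essentially the same route as the paper: establish GU via Corollary~\ref{R f.g implies GU}, obtain finite dimensionality of $R\otimes_{O_v}F$ from finite generation, and combine these with the INC-type result and Lemma~\ref{set of prime ideals lying over I_v has leq n elements}. The only cosmetic difference is that for the inclusion $\mathcal{Q}_{I_v}\subseteq\text{Max}(R)$ you invoke Lemma~\ref{INC for primes over Iv} directly, whereas the paper (mirroring the proof of Corollary~\ref{Max O_W is finite}) goes through Theorem~\ref{INC} and the INC half of Remark~\ref{remark regarding INC and GU}; these are equivalent, since the remark's INC statement specializes exactly to that lemma over $I_v$.
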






The proof of the previous corollary is quite similar to the proof of Corollary \ref{Max O_W is finite}:
use Corollary \ref{R f.g implies GU} to prove GU (instead of Corollary \ref{If exist subalgebra such that and contains then GU}); and note that
$[R \otimes_{O_{v}}F:F] < \infty$.

\begin{cor} Let $R$ be a torsion-free $O_{v}-$algebra which is finitely generated as an $O_{v}-$module,
and let $\mathcal C$ be a maximal chain in $\text{Spec}(R)$.
Then the map $Q \rightarrow Q \cap O_{v}$ is a bijective order preserving correspondence from $\mathcal C$ to $\text{Spec}(O_{v})$.

\end{cor}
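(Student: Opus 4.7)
The plan is to verify that $R$ satisfies all five of the Cohen--Seidenberg type properties (LO, INC, GD, GU, SGB) over $O_v$, and then invoke [Sa3, Corollary 1.15] exactly as was done in the proof of Corollary \ref{maximal chain covers specO_v}. Once these five properties are in hand, the map $Q \mapsto Q \cap O_v$ restricted to any maximal chain $\mathcal{C}$ in $\text{Spec}(R)$ is automatically order preserving (by definition of the contraction), injective (by INC), and surjective onto $\text{Spec}(O_v)$ (by a combination of LO at the endpoints, GU to push upward, GD to push downward, and SGB to fill in intermediate primes), giving the required bijection.

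First I would gather the ingredients. Since $R$ is a torsion-free $O_v$-algebra, Theorem \ref{R torsion-free over Ov satisfies GD} yields GD, and Theorem \ref{SGB} (applied with $S = O_v$) yields SGB. Since $R$ is finitely generated as an $O_v$-module, Lemma \ref{R f.g implies R* cap S contained in S*} gives $(R^{\times} \cap O_v) \subseteq O_v^{\times}$; combined with the torsion-freeness, this is condition (b) of Proposition \ref{almost equivalent conditions LO}, which in turn implies (e), namely LO. For GU, I would cite Corollary \ref{R f.g implies GU} directly. Finally, since $R$ is finitely generated as an $O_v$-module we have $[R \otimes_{O_v} F : F] < \infty$, so Theorem \ref{INC} gives INC.

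With LO, INC, GD, GU, and SGB established, the conclusion is immediate from [Sa3, Corollary 1.15] in the same way as in Corollary \ref{maximal chain covers specO_v}; in fact the present statement is the "finitely generated" analogue of that earlier corollary, with Corollary \ref{R f.g implies GU} replacing the quasi-valuation based argument (Corollary \ref{If exist subalgebra such that and contains then GU}) that was used there to obtain GU.

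There is no real obstacle: the entire content of the proof consists of citing the correct earlier results to assemble the five properties, and then quoting [Sa3, Corollary 1.15]. The only minor point worth flagging is the verification that finite generation as an $O_v$-module forces $[R \otimes_{O_v} F : F] < \infty$ (so that INC applies) and forces $(R^{\times} \cap O_v) \subseteq O_v^{\times}$ (so that LO applies via Proposition \ref{almost equivalent conditions LO}); both are standard and already recorded above.
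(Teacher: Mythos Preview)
Your proposal is correct and follows essentially the same approach as the paper: the paper simply says that the proof is analogous to that of Corollary \ref{maximal chain covers specO_v}, replacing the quasi-valuation-based GU argument (Corollary \ref{If exist subalgebra such that and contains then GU}) by Corollary \ref{R f.g implies GU} and noting that $[R \otimes_{O_v} F : F] < \infty$ so that Theorem \ref{INC} applies. The only minor difference is that you explicitly verify LO via Lemma \ref{R f.g implies R* cap S contained in S*} and Proposition \ref{almost equivalent conditions LO}, which the paper does not mention (the four properties INC, GU, GD, SGB already suffice for [Sa3, Corollary 1.15]); this is harmless extra detail rather than a deviation in approach.
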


As above, the proof of the previous corollary is quite similar to the proof of Corollary \ref{maximal chain covers specO_v}.

Some natural questions regarding quasi-valuation rings one may
consider now are: "Do we have a connection between $R$ being finitely generated over $O_{v}$ and $M_w$ being
cancellative"? "Is there an example of a finite dimensional field extension $E/F$
and a subring $R$ of $E$ lying over $O_{v}$ such that 
$R$ does not satisfy GU over $O_{v}$? In particular, $R$ is not finitely generated over $O_{v}$ and
there cannot exist a \qv $w$
extending $v$ on $F$ with $O_{w}=R$ and $M_{w}$ cancellative".
We shall answer these questions affirmatively. The answer to the first question will be presented
immediately and the answer to the second question will be given in a subsequent paper.







In Theorem \ref{GU} we proved that under certain assumptions on $R$ and $w$, if $M_{w}$ is cancellative then
$O_{w}$ satisfies GU over $O_{v}$. We also showed above that if
$R$ is finitely generated as a module over $O_{v}$ then $R$
satisfies GU over $O_{v}$. We will show now that if $R$ is torsion-free and
finitely generated as a module over $O_{v}$ then there exists a $v-$\qv
$w$ on $R$ extending $v$ with $R=O_{w}$ and $M_{w}$ cancellative.



\begin{rem} \label{minimal set of generators implies O_v-independent} Let $U$ be a torsion-free
$O_{v}-$module and let $C$ be a minimal set of generators of
$U$ over $O_{v}$. Then $C$ is $O_{v}-$independent and $| C | = \text{dim}_F (U \otimes_{O_{v}} F)$.
In particular, all minimal sets of generators of $U$ over $O_{v}$ have the same cardinality.
 \end{rem}

\begin{proof} Write $\sum_{i=1}^{k} \alpha_{i}x_{i}=0$ (for
$\alpha_{i} \in O_{v}, x_{i} \in C$). Assume to the contrary that
there exists $\alpha_{i} \neq 0$ and let $\alpha_{i_{0}}$ denote
an element with minimal $v-$value, $1 \leq i_{0} \leq k$. Then
$\alpha_{i_{0}}^{-1} \alpha_{i} \in O_{v}$ for all $1 \leq i \leq
k$ and, since $U$ is torsion-free over $O_{v}$, we get $$-x_{i_{0}}=\sum_{i \neq i_{0}} \alpha_{i_{0}}^{-1}
\alpha_{i}x_{i},$$ which contradicts the minimality of $C$.
Thus $C$ is $O_{v}-$independent, and therefore $\{ x \otimes 1 \}_{x \in C}$ is a basis of $U \otimes F$
over $F$.

\end{proof}

It is easy to see that the previous remark is not valid when replacing the valuation domain with a commutative ring,
and not even when replacing it with an integral domain (even when $U$ is finitely generated over $O_{v}$).
Also, the following statement does not hold: if $T \subseteq U$ is a maximal $O_{v}-$independent set then $T$ generates $U$
(as opposed to vector spaces over division rings).

In order to prove the existence of a $v-$\qv extending $v$ with a cancellative monoid,
in case $R$ is torsion-free and finitely generated as a module over $O_{v}$, we need the existence of a minimal set of generators containing 1,
as proved in the following lemma.


\begin{lem} \label{minimal set with 1} Let $R$ be a torsion-free $O_{v}-$algebra which
is finitely generated as a module over $O_{v}$. Then there exists
a minimal set of generators $B=\{ r_{1}, r_{2}, ...,r_{k}\}$ of $R$ over $O_{v}$, such
that $1 \in B$.

 \end{lem}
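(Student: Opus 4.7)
The plan is to exploit the filter quasi-valuation $w$ induced by $(R,v)$ together with the Nakayama-type Lemma \ref{R f.g implies R* cap S contained in S*}. Since $O_v$ is local and $R$ is finitely generated as an $O_v$-module, Lemma \ref{R f.g implies R* cap S contained in S*} gives $(R^{\times} \cap O_{v}) \subseteq O_{v}^{\times}$. Combined with torsion-freeness, condition (b) of Proposition \ref{almost equivalent conditions LO} holds, so condition (c) does too; in particular $w(1_R)=v(1)=0$.

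Starting from any minimal set of generators $\{s_1,\ldots,s_k\}$ of $R$ over $O_v$ (which exists by finite generation), I would write
$$1_R=\sum_{i=1}^k a_i s_i,\qquad a_i\in O_v,$$
and establish the crucial claim that at least one $a_{i_0}$ must be a unit in $O_v$. Suppose to the contrary that every $a_i\in I_v$, so $v(a_i)>0$. By Lemma \ref{Sa1, Lemma 9.25}, $w(a_i s_i)=v(a_i)+w(s_i)$; since $s_i\in R$ we have $w(s_i)\geq 0$ in $\mathcal M(\Gamma_v)$, so $w(a_i s_i)\geq v(a_i) > 0$ for each $i$. Axiom (B3) then yields $w(1_R)\geq \min_i w(a_i s_i) > 0$, contradicting $w(1_R)=0$.

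Having fixed such $a_{i_0}\in O_v^{\times}$, $a_{i_0}^{-1}\in O_v$, and solving for $s_{i_0}$ gives
$$s_{i_0}=a_{i_0}^{-1}\,1_R\;-\;\sum_{i\neq i_0} a_{i_0}^{-1}a_i\,s_i,$$
an $O_v$-linear combination of $1_R$ and the other $s_i$'s. Hence $B=\{s_1,\ldots,s_{i_0-1},1_R,s_{i_0+1},\ldots,s_k\}$ generates $R$ over $O_v$. By Remark \ref{minimal set of generators implies O_v-independent}, any minimal generating set has cardinality $\dim_F(R\otimes_{O_v}F)=k$, so $k$ is the minimal possible size of a generating set; since $|B|=k$, no proper subset of $B$ can generate $R$, and $B$ is therefore minimal and contains $1_R$.

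The main obstacle is getting $w(1_R)=0$, since this is exactly what forces one of the coefficients to be a unit. This rests on linking the Nakayama-type conclusion of Lemma \ref{R f.g implies R* cap S contained in S*} to the filter quasi-valuation through Proposition \ref{almost equivalent conditions LO}; once this link is in place, the rest of the argument is a straightforward manipulation within $\mathcal M(\Gamma_v)$ using Lemma \ref{Sa1, Lemma 9.25} and axiom (B3).
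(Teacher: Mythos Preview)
Your proof is correct and follows essentially the same approach as the paper's own argument: both invoke the filter quasi-valuation, use Lemma \ref{R f.g implies R* cap S contained in S*} together with Proposition \ref{almost equivalent conditions LO} to obtain $w(1_R)=0$, apply Lemma \ref{Sa1, Lemma 9.25} and (B3) to force some coefficient to be a unit, swap that generator for $1_R$, and appeal to Remark \ref{minimal set of generators implies O_v-independent} for minimality.
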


\begin{proof} Let $C= \{ x_{1}, x_{2}, ...,x_{k}\}$ be a set of
generators of $R$ over $O_{v}$ with $k$ minimal. 
Assume that $1 \notin C$ and
write

\begin{equation}
1= \sum_{i=1}^{k} \alpha_{i}x_{i} \tag{II} \label{3}
\end{equation}

for $\alpha_{i} \in
O_{v}$, $x_{i} \in C$. By Theorem \ref{Sa1, Theorem 9.19} and Lemma \ref{Sa1, Lemma 9.25}
there exists the filter \qv $w$ on $R$ satisfying $w(ar)=v(a)+w(r)$ for
every $a \in O_{v}$ and $r \in R$ (i.e., $w$ is a $v-$quasi-valuation with respect to $O_{v}$).
By Lemma \ref{R f.g implies R* cap S contained in S*}, $(R^{\times} \cap O_{v}) \subseteq O_{v}^{\times}$; hence, by Proposition \ref{almost equivalent conditions LO}, $w(1)=0$.

Thus, $$0=w(1)=w(\sum_{i=1}^{k} \alpha_{i}x_{i}) \geq \min_{1 \leq i
\leq k} \{ v(\alpha_{i})+w(x_{i}) \}.$$ Since $x_{i} \in R$ for
all $1 \leq i \leq k$, one has $w(x_{i}) \geq 0$ for all $1 \leq i
\leq k$. Therefore, one cannot have $v(\alpha_{i}) > 0$ for all $1
\leq i \leq k$. So, there exists $\alpha_{i_{0}}$ such that
$v(\alpha_{i_{0}})=0$, $1 \leq i_{0} \leq k$. Multiply equation
(\ref{3}) by $\alpha_{i_{0}}^{-1}$ and get
$-x_{i_{0}}=-\alpha_{i_{0}}^{-1} \cdot 1 +\sum_{i \neq i_{0}}
\alpha_{i_{0}}^{-1} \alpha_{i}x_{i}$. Let $B=(C \setminus \{
x_{i_{0}} \}) \cup \{1\}$; then $x_{i_{0}} \in sp ( B)$ and thus
$B$ is a set of generators of $R$ over $O_{v}$. Now, $|B|=k$ and $C$ is
a minimal set of generators of size $k$; therefore, by the previous Remark, $B$ is a minimal set of generators
of $R$ over $O_{v}$.



\end{proof}

\begin{thm} \label{R f.g. implies v-qv with Mw cancellative} Let $R$ be a torsion-free $O_{v}-$algebra which
is finitely generated as a module over $O_{v}$. Then there exists
a $v-$\qv $w$ on $R$ extending $v$ on $O_{v}$ such that $w(x) \neq \infty$ for all nonzero $x \in R$, and $M_{w}$ is
cancellative; moreover, $M_{w} = \Gamma_{v}$ and $O_{w}=R$.

 \end{thm}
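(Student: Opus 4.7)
The natural candidate is the filter quasi-valuation $w$ induced by $(R,v)$ from Theorem \ref{Sa1, Theorem 9.19}. First I would verify all of the easy parts of the statement for this $w$. Since $R$ is torsion-free over $O_v$, Lemma \ref{Sa1, Lemma 9.25} gives $w(cx) = v(c) + w(x)$ for every $c \in O_v$ and $x \in R$, so $w$ is already a $v$-quasi-valuation with respect to $O_v$. By Lemma \ref{R f.g implies R* cap S contained in S*}, $R^{\times} \cap O_v \subseteq O_v^{\times}$; hence Proposition \ref{almost equivalent conditions LO} yields $w(c \cdot 1_R) = v(c)$ for every $c \in O_v$, i.e., $w$ extends $v$ on $O_v$, and in particular $w(1_R) = 0$. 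Because $w(x) \geq 0$ for every $x \in R$ by construction, we also get $O_w = R$.

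The remaining content is the equality $M_w = \Gamma_v$, which will immediately imply both cancellativity (since $\Gamma_v$ is a group) and that $w(x) \neq \infty$ for every nonzero $x$. The inclusion $\Gamma_v \subseteq M_w$ is automatic from $w$ extending $v$, so the real task is $M_w \subseteq \Gamma_v$; equivalently, for every nonzero $x \in R$ one must show $w(x)$ is the cut associated to some element of $\Gamma_v$.

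To prove this I would invoke Lemma \ref{minimal set with 1} to fix a minimal generating set $B = \{r_1 = 1_R, r_2, \ldots, r_k\}$ of $R$ over $O_v$. By Remark \ref{minimal set of generators implies O_v-independent}, $B$ is $O_v$-linearly independent, so every $x \in R$ admits a \emph{unique} representation $x = \sum_{i=1}^k \alpha_i r_i$ with $\alpha_i \in O_v$. For nonzero $x$, choose $\alpha_{i_0}$ of minimal $v$-value among the nonzero $\alpha_i$'s; I claim $w(x) = v(\alpha_{i_0})$. The inequality $w(x) \geq v(\alpha_{i_0})$ comes from writing $x = \alpha_{i_0} z$ with $z = \sum_i \alpha_{i_0}^{-1}\alpha_i r_i \in R$ (noting $\alpha_{i_0}^{-1}\alpha_i \in O_v$ by minimality) and applying the $v$-quasi-valuation property, together with $w(z) \geq 0$; in cut language, $\alpha_{i_0} \in S_x^R$. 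For the matching upper bound, suppose $a \in S_x^R$: then $x \in xR \subseteq aR$, so $x = ar$ for some $r \in R$. Expanding $r = \sum_i \beta_i r_i$ and invoking \emph{uniqueness} of the representation of $x$, one forces $\alpha_i = a\beta_i$ for every $i$, whence $v(a) \leq v(\alpha_{i_0})$. Translating back, $w(x)^L = (-\infty, v(\alpha_{i_0})]$, so $w(x)$ equals the cut of $v(\alpha_{i_0}) \in \Gamma_v$.

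The step I expect to be the main obstacle is that last pinning-down of the cut $w(x)$: a priori the filter quasi-valuation can take values in $\mathcal{M}(\Gamma_v) \setminus \Gamma_v$, and only the strict matching of upper and lower bounds for $v(a)$ over $a \in S_x^R$ rules out a ``gap'' cut. That matching hinges on the uniqueness of representation, which in turn requires both torsion-freeness (via Remark \ref{minimal set of generators implies O_v-independent}) and the fact that a minimal generating set can be chosen to contain $1_R$ (via Lemma \ref{minimal set with 1}); losing either of these would break the argument.
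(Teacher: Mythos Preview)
Your argument is correct, but it takes a different route from the paper. The paper does \emph{not} use the filter quasi-valuation: it defines a new function directly by the formula $w(\sum_i \alpha_i r_i)=\min_i v(\alpha_i)$ (with $B=\{r_1=1,\ldots,r_k\}$ as in Lemma~\ref{minimal set with 1}) and then verifies the quasi-valuation axioms, the $v$-quasi-valuation identity, and $w(1)=0$ by explicit computation. Your approach instead starts from the filter quasi-valuation (so the axioms come for free from Theorem~\ref{Sa1, Theorem 9.19}) and then proves, via the support computation and uniqueness of the representation in $B$, that $w(x)^L=(-\infty,v(\alpha_{i_0})]$ --- which is precisely the paper's formula in disguise. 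So the two constructions yield the same function; you trade the axiom check for a cut-monoid argument. The paper's route is more self-contained (no cut monoid needed) and makes the explicit formula visible, which is convenient for the subsequent Proposition~\ref{R f.g. implies natural extension with Mw cancellative}; your route is more conceptual and explains \emph{why} the filter quasi-valuation already has group values in the finitely generated case. One small remark: in your final paragraph you say the argument would break without $1_R\in B$, but in fact your support computation only uses $O_v$-independence of $B$ (and $x\in xR$); the presence of $1_R$ in $B$ is not needed for your version, though it is what makes the paper's direct verification of $w(1)=0$ immediate.
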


\begin{proof} Let $B=\{ r_{1}=1, r_{2}, ...,r_{k}\}$ be a minimal set of generators of $R$ over $O_{v}$ containing
$1$ (there exists such a set by the previous Lemma). Define for
every $\sum_{i=1}^{k} \alpha_{i}r_{i} \in R$,
$$w(\sum_{i=1}^{k} \alpha_{i}r_{i})= \min_{1 \leq i
\leq k} \{ v(\alpha_{i}) \}.$$ $B$ is a minimal set of generators
of $R$ over $O_{v}$ and thus, by Remark \ref{minimal set of
generators implies O_v-independent}, $B$ is $O_{v}-$independent
and therefore $w$ is well defined. It is obvious that $w$ extends
$v$ (on $O_{v}$) and $w(x) \neq \infty$ for all nonzero $x \in R$. We shall now prove that $w$ is indeed a \qv on
$R$. Let $x=\sum_{i=1}^{k} \alpha_{i}r_{i}, y=\sum_{i=1}^{k}
\beta_{i}r_{i} \in R$. We have,
$$w(x+y)=w(\sum_{i=1}^{k} \alpha_{i}r_{i}+ \sum_{i=1}^{k}
\beta_{i}r_{i})=w(\sum_{i=1}^{k} (\alpha_{i}+\beta_{i})r_{i})=$$
$$\min_{1 \leq i \leq k} \{ v(\alpha_{i}+\beta_{i})\} \geq
\min_{1 \leq i \leq k} \{ \min \{v(\alpha_{i}),v(\beta_{i})\}\}=$$
$$\min \{ \min_{1 \leq i \leq k} \{ v(\alpha_{i})\}, \min_{1 \leq i \leq k} \{
v(\beta_{i})\}\}=$$$$ \min \{ w(\sum_{i=1}^{k} \alpha_{i}r_{i}),
w(\sum_{i=1}^{k} \beta_{i}r_{i}) \}=\min \{ w(x),w(y)\}.$$

Also, $$w(xy)=w(\sum_{i=1}^{k} \alpha_{i}r_{i} \cdot
\sum_{i=1}^{k} \beta_{i}r_{i})=w(\sum_{1 \leq i,j \leq k}
\alpha_{i} \beta_{j}r_{i}r_{j}) $$

Now, by the proof above $w(x'+y') \geq \min \{ w(x'),w(y')\}$ for
all $x',y' \in R$. Also, $r_{i}r_{j} \in R$ for every $1 \leq i,j
\leq k$. Thus,

 $$ w(\sum_{1 \leq i,j \leq k}
\alpha_{i} \beta_{j}r_{i}r_{j}) \geq \min_{1 \leq i,j \leq k}
\{w(\alpha_{i} \beta_{j}r_{i}r_{j}) \} .$$

Next, we show that for every $\alpha \in O_{v}$ and $r \in R$ we have
$w(\alpha r)=v(\alpha)+w(r)$. 
Indeed, let $\alpha \in O_{v}$ and write $r=\sum_{i=1}^{k}
\gamma_{i}r_{i}$ for $\gamma_{i} \in O_{v}$ and $r_{i} \in B$,
then $$w(\alpha \sum_{i=1}^{k} \gamma_{i}r_{i})=w( \sum_{i=1}^{k}
\alpha \gamma_{i}r_{i})=\min_{1 \leq i \leq k} \{ v(\alpha
\gamma_{i})\}$$$$=\min_{1 \leq i \leq k} \{ v(\alpha)+v(
\gamma_{i})\}=v(\alpha)+\min_{1 \leq i \leq k} \{ v(
\gamma_{i})\}$$$$=v(\alpha)+w(\sum_{i=1}^{k}
\gamma_{i}r_{i})=v(\alpha)+w(r).$$ Also, since $r_{i}r_{j} \in R$
for every $1 \leq i,j \leq k$, we get $w(r_{i}r_{j}) \geq 0$ for
every $1 \leq i,j \leq k$. Hence,

$$\min_{1 \leq i,j \leq k} \{w(\alpha_{i} \beta_{j}r_{i}r_{j}) \}=
\min_{1 \leq i,j \leq k} \{v(\alpha_{i}
\beta_{j})+w(r_{i}r_{j})\}$$$$ \geq \min_{1 \leq i,j \leq k}
\{v(\alpha_{i} \beta_{j}) \}=\min_{1 \leq i,j \leq k}
\{v(\alpha_{i})+v( \beta_{j})\}  $$
$$ =\min_{1 \leq i \leq k} \{v(\alpha_{i}) \}+ \min_{1 \leq i \leq
k} \{v(\beta_{i}) \}=w(\sum_{i=1}^{k}
\alpha_{i}r_{i})+w(\sum_{i=1}^{k} \beta_{i}r_{i})$$$$=w(x)+w(y).$$

\end{proof}

\begin{prop} \label{R f.g. implies natural extension with Mw cancellative} Let $R$ be a torsion-free $O_{v}-$algebra which
is finitely generated as a module over $O_{v}$. Then there exists
a $v-$\qv $W$ (with respect to $F$) on $R \otimes_{O_{v}} F$ such that
$w(x) \neq \infty$ for all nonzero $x \in R \otimes_{O_{v}} F$, $w$ extends $v$ on $F$,
$M_{W} = \Gamma_{v}$, and $O_{W}=R \otimes 1$. 
 \end{prop}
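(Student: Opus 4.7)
The plan is to assemble the quasi-valuation $W$ by combining the $v$-quasi-valuation on $R$ produced in Theorem \ref{R f.g. implies v-qv with Mw cancellative} with the natural extension construction of Lemma \ref{natural extension}, and then to verify the five stated properties.

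First, since $R$ is torsion-free and finitely generated over $O_v$, Theorem \ref{R f.g. implies v-qv with Mw cancellative} yields a $v$-quasi-valuation $w$ on $R$ (with respect to $O_v$) which extends $v$ on $O_v$, satisfies $O_w = R$, $M_w = \Gamma_v$, and has $w(x) \neq \infty$ for every nonzero $x \in R$. In particular $w(1_R) = v(1) = 0$, so the hypotheses of Lemma \ref{natural extension} are met. Applying that lemma gives a $v$-quasi-valuation $W$ on $R \otimes_{O_v} F$ (with respect to $F$) extending $w$, defined by $W(r \otimes b^{-1}) = w(r) - v(b)$, with value monoid $M_W = M_w = \Gamma_v$.

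Next I would check each of the remaining claims. That $W$ extends $v$ on $F$ follows from $W(\alpha \cdot 1) = W(a \cdot 1_R \otimes b^{-1}) = w(a \cdot 1_R) - v(b) = v(a) - v(b) = v(\alpha)$ for $\alpha = a/b \in F$, using that $w$ extends $v$ on $O_v$. That $W(x) \neq \infty$ for every nonzero $x$ follows because any such $x$ can be written as $r \otimes b^{-1}$ with $r \neq 0$ (by [Sa1, Remark 9.29]), so $W(x) = w(r) - v(b) \in \Gamma_v$. The equality $M_W = \Gamma_v$ is already part of Lemma \ref{natural extension}; it is automatically cancellative as it is a group.

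The most substantive step, and the one I expect to be the main obstacle, is the identification $O_W = R \otimes 1$. The inclusion $R \otimes 1 \subseteq O_W$ is immediate since $W(r \otimes 1) = w(r) \geq 0$ for all $r \in R = O_w$. For the reverse inclusion one must exploit the \emph{explicit} description of $w$ given in the proof of Theorem \ref{R f.g. implies v-qv with Mw cancellative}, not just its abstract $v$-quasi-valuation properties: pick a minimal set of generators $B = \{r_1 = 1, r_2, \ldots, r_k\}$ of $R$ over $O_v$ (which by Remark \ref{minimal set of generators implies O_v-independent} is $O_v$-linearly independent), so that $w(\sum \alpha_i r_i) = \min_i v(\alpha_i)$. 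Given $x = r \otimes b^{-1} \in O_W$ with $r = \sum \alpha_i r_i$, the condition $W(x) \geq 0$ reads $\min_i v(\alpha_i) \geq v(b)$, hence $\alpha_i b^{-1} \in O_v$ for every $i$, and therefore $x = \sum (\alpha_i b^{-1}) r_i \otimes 1 \in R \otimes 1$. This completes the verification.
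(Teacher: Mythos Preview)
Your proposal is correct and follows essentially the same route as the paper: invoke Theorem \ref{R f.g. implies v-qv with Mw cancellative} to get the explicit $w$, pass to the natural extension $W$ via Lemma \ref{natural extension} (the paper cites the underlying Lemma \ref{[Sa1, Lemma 9.31]} directly), and then use the explicit formula $w(\sum \alpha_i r_i)=\min_i v(\alpha_i)$ to show $O_W = R\otimes 1$. Your observation that the reverse inclusion requires the \emph{explicit} form of $w$, not just its abstract $v$-quasi-valuation properties, is exactly the point; the paper carries out the same computation, phrasing the conclusion as $r \in bR$ and hence $r\otimes b^{-1} = r'\otimes 1$, which is equivalent to your $x = \sum(\alpha_i b^{-1})r_i\otimes 1$.
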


\begin{proof} By Theorem \ref{R f.g. implies v-qv with Mw cancellative} there exists a $v-$\qv $w$ (with respect to $O_{v}$) on
$R$ extending $v$ on $O_{v}$ such that $M_{w} = \Gamma_{v}$ and $w(x) \neq \infty$ for all nonzero $x \in R$;
$w$ is defined above. By Lemma \ref{[Sa1, Lemma 9.31]} (taking
$M=\Gamma_{v}$), there exists the natural extension $W$,
which extends $w$ on $R$ and such that $M_{W} = M_{w}$. Recall that $W$ is defined by
$W(r \otimes \frac{1}{b})=w(r)-v(b)$ for all nonzero $r
\otimes \frac{1}{b} \in R \otimes_{O_{v}} F$. It is easy to
see that $W$ is a $v-$\qv (with respect to $F$) on $R \otimes_{O_{v}} F$,
extending $v$ on $F$ and such that $w(x) \neq \infty$ for all nonzero $x \in R \otimes_{O_{v}} F$.  
We prove now that $O_{W}=R
\otimes 1$. Note that for every element $r \in R$, we have $w(r)
\geq 0$ and thus $W(r \otimes 1)=w(r) \geq 0$. Conversely,
let $r \otimes \frac{1}{b}  \in R \otimes_{O_{v}}F$, for
nonzero $r \in R$ and $b \in O_{v}$, with $W(r \otimes
\frac{1}{b}) \geq 0$. Write $r=\sum_{i=1}^{k} \alpha_{i}r_{i}$;
then $\min_{1 \leq i \leq k} \{ v(\alpha_{i}) \}=w(r) \geq v( b)$.
Thus, $\alpha_{i} \in bO_{v}$ for every $1 \leq i \leq k$. Hence,
$r=\sum_{i=1}^{k} bb^{-1} \alpha_{i}r_{i} \in bR$. So one can
write $r=b r'$ for some $r' \in R$. Therefore,
$$r \otimes \frac{1}{b}=b r' \otimes \frac{1}{b}=r'
\otimes 1.$$ Consequently, $O_{W}=R \otimes 1$.
\end{proof}



To clarify, we define the following properties,
for $R$ a torsion-free algebra over $O_{v}$:

(i) $R$ is finitely generated as a module over $O_{v}$;

(ii) There exists a $v-$\qv $w$ on $R$ such that $w(1)=0$, $w(x) \neq \infty$ for all nonzero $x \in R$, $M_{w}$ is cancellative
and $R \cong O_W$, where $W$ denotes the natural extension of $w$.
Moreover, there exists an $O_{v}-$subalgebra $R'$ of $R \otimes_{O_{v}} F$ such that $R'$ is left or
right artinian and contains $R$.

(iii) $R$ satisfies GU over $O_{v}$.

By Theorem \ref{R f.g. implies v-qv with Mw cancellative} and Proposition
\ref{R f.g. implies natural extension with Mw cancellative}, (i) implies (ii) (note that $[R \otimes_{O_{v}} F:F]<\infty$). By
Corollary \ref{If exist subalgebra such that and contains then GU}, (ii) implies (iii). It is easy to see that (ii) does not imply (i). Indeed, let $F$ denote the field of fractions of $O_{v}$,
let $E/F$ be any algebraic field extension such that $[E:F]= \infty$, and let $R$ be any valuation domain of $E$ lying over $O_{v}$. It is well known that there exists a valuation $u$ on $E$, extending $v$, whose value ring is $R$; in particular, $\Gamma_u$ is cancellative. It is clear that $R \otimes_{O_{v}} F$ is artinian whereas $R$ is not finitely generated as a module over $O_{v}$.

I do not know if (iii) implies (ii). However, it is easy to see that if
$R$ is not torsion-free over $O_{v}$ then (i) implies (iii) (by Corollary \ref{R f.g implies GU}), but does not imply (ii).
Indeed, let $R$ be an $O_{v}-$algebra, finitely generated as
a module over $O_{v}$, and is not faithful over $O_{v}$. Then by Remark \ref{w(1)=0 implies w extends v},
there cannot exist a $v-$\qv $w$ on $R$ such that $w(1)=0$.

Department of Mathematics, Sce College, Ashdod 77245, Israel.

{\it E-mail address: sarusss1@gmail.com}

\end{document}
